\newcommand{\N}{\mathbb{N}}
\newcommand{\R}{\mathbb{R}}
\newcommand{\E}{\mathbb{E}}
\newcommand{\Q}{\mathbb{Q}}
\newcommand{\F}{\mathcal{F}}
\newcommand{\A}{\mathcal{A}}
\DeclareMathOperator{\spa}{\textbf{span}}
\DeclareMathOperator{\supp}{supp}
\DeclareMathOperator{\nmax}{\textbf{max}\,}
\DeclareMathOperator{\nmin}{\textbf{min}\,}
\DeclareMathOperator{\ninf}{\textbf{inf}\,}
\DeclareMathOperator{\nsup}{\textbf{sup}\,}
\DeclareMathOperator{\nlim}{\textbf{lim}\,}
\DeclareMathOperator{\conprod}{\mathlarger{\mathlarger{\mathlarger{\Join}}}}
\newtheorem{defn}{Definition}[section]
\newtheorem{rem}{Remark}[section]
\newtheorem{prop}{Proposition}[section]
\newtheorem{lem}{Lemma}[section]
\newtheorem{thm}{Theorem}[section]
\begin{document}


\oddsidemargin 16.5mm
\evensidemargin 16.5mm

\thispagestyle{plain}

%
%

\vspace{5cc}
\begin{center}

{\large\bf  VERSIONS OF EBERLEIN-\v{S}MULIAN AND AMIR-LINDENSTRAUSS THEOREMS IN THE FRAMEWORK OF CONDITIONAL SETS
\rule{0mm}{6mm}\renewcommand{\thefootnote}{}
\footnotetext{\scriptsize 2010 Mathematics Subject Classification. 46S99, 46B50, 62P05.

\rule{2.4mm}{0mm}Keywords and Phrases. conditional set theory; conditional weak topology; Amir-Lindenstrauss theorem; Eberlein-\v{S}mulian theorem; Baire Category theorem.
}}

\vspace{1cc}
{\large\it Jos\'e M. Zapata}

\vspace{1cc}
\parbox{24cc}{{\small

Based on conditional set theory, we study conditional weak topologies, extending some well-known results to this  framework and culminating with the proof of conditional versions of Eberlein-\v{S}mulian and Amir-Lindenstrauss theorems. In pursuing this aim, we prove conditional versions of Baire Category theorem and Uniform Boundedness Principle.

%

}}
\end{center}

\vspace{1cc}

\vspace{1.5cc}
\begin{center}
{\bf INTRODUCTION}
\end{center}

This paper is a sequel of the manuscript \cite{key-7}. There, S. Drapeau, A. Jamneshan, M. Karliczek and M. Kupper succeeded in creating a new formal language together with constructive methods, which provide an alternative to classical measurable selection arguments. On this basis, they managed to draw many basic results from real analysis, employing a formulation that consistently depends on the elements of a measure algebra associated to a $\sigma$-finite measure space, or more in general, of a complete Boolean algebra. This method allows to deal with dynamic settings in which there exists a consistent flow of information. This type of problems frequently arise in mathematical finance. For instance, dynamic measurement of risk has gained increasing importance in the recent literature cf.\cite{key-32,key-33,key-20,key-21,key-22}. This is because, unlike the static case, it gives a precise and consistent measurement of the risk of financial positions over time, taking into account the arrival of new information throughout the strategy of risk management.


In mathematical finance, many results, specifically robust representation results of risk measures or monetary utility functions (i.e. the negative value of a risk measure), rely on weak and weak-$*$ compactness theorems cf.\cite{key-25,key-24,key-26}. In particular, we find that those results which link weak or weak-$*$ compactness to convergence of sequences are extremely useful, since weak and weak-$*$ topologies generally lack a countable neighborhood base of $0$. For example, a theorem in this direction, which is considered one of deepest theorems in the study of weak topologies, is the classical Eberlein-\v{S}mulian theorem, which states that a subset of a normed space is weakly compact if, and only if, it is weakly sequentially compact. Regarding the weak-$*$ topology, another deep result is the so-called Amir-Lindenstrauss theorem. This result states that, for a weakly compactly generated Banach space $E$ (i.e. $E$ is spanned by a weakly-$*$ compact, convex and balanced subset), its dual unit ball $B_{E^*}$ turns out to be weakly-$*$ sequentially compact.  

It is for this reason that results which relate weak or weak-$*$ compactness to convergence of sequences have applications at the core of the theory of representation of risk measures or monetary utility functions. For instance, the so-called Jouini-Schachermayer-Touzi theorem (see \cite[Theorem 24]{key-25}) is a robust representation theorem for convex risk measures defined on $L^\infty$, the set of (classes of equivalence) of bounded measurable functions in some underlying probability space $(\Omega,\mathcal{F},P)$. This theorem states that a convex risk measure $\rho:L^\infty\rightarrow \R$, with some mild continuity condition called Fatou property, satisfies that for each $x\in L^\infty$ there is a probability $Q$ such that $\rho(x)=\E_Q[-x]+c(Q)$ if, and only if, the sublevel set $V_k=\left\{ Q \:;\: c(Q)\leq k \right\}$ is weakly compact in $L^1$ (with the identification of the probability measure and the Radon-Nikodym derivative $d Q/d P\in L^1$). Then, Eberlein-\v{S}mulian theorem is used for proving the sufficiency of the last condition; in this case, a sequence contained in the weakly compact subset $V_k$ is considered, and then a convergent subsequence is taken. 
In \cite{key-35} the Eberlein-\v{S}mulian theorem is used to prove that, for a convex law determined risk measure $\rho:L^\infty\rightarrow\R$, a weak form of mixture continuity is equivalent to robustness (see \cite[Proposition 2.7]{key-35}).  Another example is \cite{key-26}, where a perturbed version of the classical weak compactness James' theorem is proved with the aim of providing a generalization of Jouini-Schachermayer-Touzi theorem for Orlicz spaces. In this work, Eberlein-\v{S}mulian has a key role (see for instance proofs of Theorem 1 and Lemma 4 of \cite{key-26}).

Thereby, the aim of this paper is to provide a solid study of weak and weak-$*$ compactness in a conditional setting, which can give rise to results in the study of dynamic risk measures. We study some basic results of functional analysis under this approach. For instance, conditional versions of Baire Category theorem and Uniform  Boundedness Principle are provided, as well as conditional versions of  Goldstine's theorem and other basic results on weak topologies. We finally culminate with the proofs of the main results of this paper.   Namely, we will test that the statements of the classical Eberlein-\v{S}mulian and Amir-Lindenstrauss theorems  naturally extend to the conditional setting. Being more concrete, the conditional version of Eberlein-\v{S}mulian theorem states that a conditional subset of a conditionally  normed space is conditionally weakly compact if, and only if, it is conditionally weakly sequentially compact, and the conditional version of Amir-Lindenstrauss theorem claims that, if a conditional Banach space is conditionally spanned by a conditionally weakly-$*$ compact, conditionally convex and conditionally balanced subset, then its conditional dual unit ball is conditionally weakly-$*$ sequentially compact.  
  
The paper is structured as follows: in Section 1 we first provide some preliminaries, recall the main notions of the conditional set theory, and fix the notation of the paper. Section 2 is devoted to prove some basic results of functional analysis. Finally, in Section 3 we will prove the conditional versions of Eberlein-\v{S}mulian and Amir-Lindenstrauss theorems.


\section{Preliminaries and notation}   
\label{sec1}


Let $(\mathcal{A},\vee,\wedge,\empty^c,0,1)$ be a complete Boolean algebra, which will be the same throughout this paper. For instance, we can think of the complete Boolean algebra $\A$ $-$called measure algebra$-$ obtained by identifying two events in a probability space $(\Omega,\F,P)$ if, and only if, the symmetric difference of which is $P$-negligible.

For the convenience of the reader, let us fix some notation. Given a family $\{a_i\}_{i\in I}$ in $\mathcal{A}$, its supremum is denoted by $\vee_{i\in I} a_i$ and its infimum by $\wedge_{i\in I} a_i$. For $a\in\A$ we define the set of partitions of $a$, which is given by
\[
p(a):=\{ \{a_i\}_{i\in I} \subset \A \:;\: a=\vee a_i\textnormal{, }a_i \wedge a_j = 0, \textnormal{ for all }i\neq j\textnormal{, }i,j\in I  \}. 
\]
Note that we allow $a_i=0$ for some $i\in I$. We will also denote by $\A_a:=\{b\in\A\:;\: b\leq a \}$ the trace of $\A$ on $a$, which is a complete Boolean algebra as well. 

Let us recall the notion of conditional set:

\begin{defn}
\cite[Definition 2.1]{key-7}
\label{def: condSet}
Let $E$ be a non empty set and let $\A$ be a complete Bolean algebra. A \textit{conditional set} of $E$ and $\A$ is a collection $\textbf{E}$ of objects $x|a$ for $x\in E$ and $a\in\A$ satisfying the following three axioms: 

\begin{enumerate}
	\item if $x,y\in E$ and $a,b\in\A$ with $x|a=y|b$, then $a=b$;
	\item (Consistency) if $x,y\in E$ and $a,b\in\A$ with $a\leq b$, then $x|b=y|b$ implies $x|a=y|a$;
	\item (Stability) if $\{a_i\}_{i\in I}\in p(1)$ and $\{x_i\}_{i\in I}\subset E$, then there exists an unique $x\in E$ such that $x|a_i=x_i|a_i$ for all $i\in I$. 
\end{enumerate}
    
The unique element $x\in E$ provided by 3, is called the \textit{concatenation} of the family $\{x_i\}$ along the partition $\{a_i\}$, and is denoted by $\sum_{i\in I} x_i|a_i$, or $\sum x_i|a_i$.
\end{defn}  

In which follows, we shall recall the basic notions of conditional set theory and the notation that will be employed throughout this paper, which is essentially  the same as \cite{key-7} with a few exceptions that will be explained. Since the theory of conditional sets is an extensive theoretical development, there is no room to give a detailed review of all the notions. For this reason, in many cases we will just mention some concepts with an exact reference to the original definition in \cite{key-7}. 

Let us start by recalling an important example of conditional set, which is referred to as the \textit{conditional set of step functions} of a non empty set $X$. Namely, for given a non empty set $X$, the conditional set $\textbf{E}$ of step functions of $X$ is a collection of objects $x|a$ where $a\in\A$ and $x$ is a formal sum $x=\sum x_i|a_i$ with $\{a_i\}\in p(1)$ and $x_i\in X$ for all $i$. We do not give  the formal construction; instead, we refer the reader to 5 of \cite[Examples 2.3]{key-7}. 

As particular cases, when either $X=\N$ or $X=\Q$, we obtain the \textit{conditional natural numbers} $\textbf{N}$, or the \textit{conditional rational numbers} $\textbf{Q}$, respectively. Likewise, we have the generating sets, which are given by:
\[
\begin{array}{ccc}
N=\left\{ \sum n_i|a_i  \:;\: n_i\in\N,\:\{a_i\}\in p(1) \right\}  & \textnormal{ and } & Q=\left\{ \sum q_i|a_i  \:;\: q_i\in\Q,\:\{a_i\}\in p(1) \right\}.
\end{array}
\]

\begin{defn}
\cite[Definition 2.5]{key-7}
Let $\textbf{E}$ be a conditional set of a non empty set $E$. A non empty subset $F$ of $E$ is called \textit{stable} if
\[
F=\left\{ \sum x_i|a_i \:;\: \{a_i\}\in p(1),\: x_i\in F\textnormal{ for all }i \right\}.
\]
$S(\textbf{E})$ stands for the set of all stable subsets $F$  of $E$.
\end{defn}

The \textit{stable hull} of a non empty subset $F$ of $E$ is introduced in \cite{key-7} as 
\[
s(F):=\left\{\sum x_i|a_i \:;\: \{a_i\}\in p(1),\: x_i\in F\textnormal{ for all }i\right\},
\]
which is the smallest stable subset containing $F$.

It is known from \cite{key-7} that every set $F\in S(\textbf{E})$ generates a conditional set 
\[
\textbf{F}:=\left\{x|a\:;\: x\in F,\:a\in\A\right\}.
\]

For given a conditional set $\textbf{E}$, $P(\textbf{E})$ denotes the collection of all conditional sets $\textbf{F}$ generated by $F\in S(\textbf{E})$, and the \textit{conditional power set} is defined by   
\[
\textbf{P}(\textbf{E}):=\left\{\textbf{F}|a=\{x|b\:;\: x\in F,\:b\leq a\}\:;\:\textbf{F}\in P(\textbf{E}),\:a\in\A\right\},
\] 
which is a conditional set of $P(\textbf{E})$ (see \cite[Definition 2.7]{key-7}).

Drapeau et al. \cite{key-7} also observed that every element $\textbf{F}|a$ is a conditional set of $F|a:=\{x|a\:;\: x\in F\}$ and $\A_a$,  with the conditioning $(x|a)|b:=x|b$ for $b\leq a$. Such conditional sets are called \textit{conditional subsets} of $\textbf{E}$. In particular, we have $\textbf{E}|0$, which will be referred to as \textit{null conditional set}.

\begin{defn}
\cite[Definition 2.8]{key-7}
Suppose that $\textbf{E}$ and $\textbf{F}$ are conditional sets of $E,\A$ and $F,\mathcal{B}$, respectively. Then, $\textbf{F}$ is said to be conditionally included, or conditionally contained, in $\textbf{E}$ if $\mathcal{B}=\A_a$ for some $a\in\A$ and $\textbf{F}=\textbf{H}|a$ for some $\textbf{H}\in P(\textbf{E})$. In that case, we use the notation $\textbf{F}\sqsubset\textbf{E}$, and we say that $\textbf{F}$ is a conditional subset of $\textbf{E}$ on $a$.
\end{defn}

As in \cite{key-7}, the notation $\textbf{F}$ or $\textbf{F}|a$ will be chosen depending on the context.

Drapeau et al. \cite{key-7}  pointed out that $\sqsubset$ is a partial order on $\textbf{P}(\textbf{E})$ with greatest element $\textbf{E}=\textbf{E}|1$ and least element $\textbf{E}|0$. In fact, we see that $\sqsubset$ is  the classical set inclusion for elements of $\textbf{P}(\textbf{E})$. 

For any non empty subset $F$ of $E$, we will denote by $\textbf{s}(F)$ the conditional subset generated by the stable hull of $F$. 

Given $x\in E$, we will employ the notation $\textbf{x}$ for the object $x|1$. Such an element will be called \textit{conditional element} of $\textbf{E}$.\footnote[1]{Drapeau et al. \cite{key-7} introduced the notion of conditional element in a different way. Namely, every $x\in E$ defines a conditional subset $\{x|a\:;\: a\in\A\}$. Due to consistency, there is a bijection between the collection of these conditional subsets and the collection of objects $x|1$ with $x\in E$, therefore we can rewrite a new definition of conditional element $\textbf{x}$ as the object $x|1$. By doing so, we obtain that a conditional element is in fact an element of $\textbf{E}$, and we can use the convenient notation $\textbf{x}\in\textbf{E}$, which is not valid for the notion introduced in \cite{key-7}.} Since $\textbf{F}|a$ is a conditional set of $F|a$, a conditional element of $\textbf{F}|a$ is an elements of the form $x|a$ with $x\in F$, which is denoted by $\textbf{x}|a$. In general, if we choose the notation $\textbf{F}$ for a conditional subset, we will use the notation $\textbf{x}\in\textbf{F}$ for its conditional elements.

From time to time, some conditional subsets will be required to be defined by describing their conditional elements. For instance, suppose that $\phi$  is a certain statement which can be true of false for the conditional elements of $\textbf{E}$. Also, assume that the statement $\phi$ is true for at least a conditional element $\textbf{x}_0\in \textbf{E}$. Since the family $\{\textbf{x} \in \textbf{E} \:;\: \phi(\textbf{x})\textnormal{ is true}\}$ is not generally a conditional set, we will employ the following formal set-builder notation for conditional subsets:
\[
                    [\textbf{x} \in \textbf{E} \:;\: \phi(\textbf{x})\textnormal{ is true}] := \textbf{s} \left(\left\{x \:;\: \phi(\textbf{x})\textnormal{ is true} \right\}\right) .
\]

Note that the stable hull of a subset $L$ of $E$ can be defined only if $L$ is non empty. For this reason we supposed $\phi$ to be true for some conditional element of $\textbf{E}$. 

We also have operations for conditional subsets of a conditional set $\textbf{E}$. Namely, we have the \textit{conditional union}, the \textit{conditional intersection}, and the \textit{conditional complement}, which are denoted by $\sqcup$, $\sqcap$ and ${}^\sqsubset$, respectively. We do not include the construction; instead, we refer to the proof of \cite[Theorem 2.9]{key-7}. Moreover, $(\textbf{P}(\textbf{E}), \sqcup,\sqcap,{}^\sqsubset, \textbf{E}, \textbf{E}|0)$ is a complete Boolean algebra (see \cite[Corollary 2.10]{key-7}).

\begin{defn}
\cite[Definition 2.17]{key-7} Let $\textbf{E},\textbf{F}$ be conditional sets. A function $f:E\rightarrow F$ is said to be \textit{stable} if 
\[
\begin{array}{cc}
f\left(\sum x_i|a_i \right)=\sum f(x_i)|a_i, & \textnormal{ for all }\{a_i\}\in p(1)\textnormal{ and every family }\{x_i\}\subset E. 
\end{array}
\]
A conditional subset $\textbf{G}_\textbf{f}$ of $\textbf{E}\Join\textbf{F}$ on $1$ is the \textit{graph of a conditional function} $\textbf{f}:\textbf{E}\rightarrow\textbf{F}$ if $G_f$ is the graph of a stable function $f:E\rightarrow F$.

For $\textbf{x}\in\textbf{E}$, we define $\textbf{f}(\textbf{x}):=f(x)|1$.

For $\textbf{A}|a\sqsubset \textbf{E}$ its \textit{conditional image} is defined by $\textbf{f}(\textbf{A}|a):=\left[ \textbf{f}(\textbf{x}) \:;\: \textbf{x}\in \textbf{A} \right]|a.$

For $\textbf{B}|b\sqsubset\textbf{F}$ the \textit{conditional preimage} is $\textbf{f}^{-1}(\textbf{B}|b)=\textbf{U}|c$, where
\[
c:=\vee\left\{a\:;\: a\leq b \:;\: \exists x\in E,\: y\in U\textnormal{ such that }f(x)|a=y|a\right\},
\]
\[
U:=\left\{ x\in E \:;\: \exists y\in V,\:f(x)|c=y|c\right\}.
\]
A conditional function $\textbf{f}:\textbf{E}\rightarrow\textbf{F}$ is conditionally injective if for every pair $x,x'\in E$ with $x|a\neq x'|a$ for all $a\neq 0$ it holds that $f(x)|a\neq f(x')|a$ for all $a\neq 0$; it is conditionally surjective if $f$ is surjective; and it is conditionally bijective if it is conditionally injective and conditionally surjective.
\end{defn}

The notions of \textit{conditionally countable} and \textit{conditionally finite} conditional subset are introduced in \cite[Definition 2.23]{key-7}.

Other relevant notions are the concepts of \textit{stable family} and \textit{conditional family} (we refer to \cite[Definition 2.20]{key-7}).

The \textit{conditional Cartesian product} is introduced in \cite[Definition 2.14]{key-7}.  Let be given a non empty family $\{\textbf{E}_i\}_{i\in I}$ of conditional sets of $E_i$ and $\A$, then we will denote their \textit{conditional product} as follows:
\[
\conprod_{i\in I} \textbf{E}_i=\left\{ (x_i|a)_{i\in I} \:;\: \: x_i|a\in\textbf{E},\: a\in\A  \right\}.
\]

For a finite family of conditional sets, the conditional product will be denoted by $\textbf{E}_1\Join...\Join\textbf{E}_n$.

For given two conditional sets $\textbf{E},\textbf{I}$, Drapeau et al.\cite{key-7} defined
\[
\textbf{E}^\textbf{I}:=\left[\{\textbf{x}_\textbf{i}\}_{\textbf{i}\in\textbf{I}}\:;\: \{\textbf{x}_\textbf{i}\}_{\textbf{i}\in\textbf{I}}\textnormal{ is a conditional family}  \right].
\]

Let us introduce the following version of the conditional Cartesian product for conditional families of conditional sets, which is based on the conditional axiom of choice (see \cite[Theorem 2.26]{key-7}) and will be used in some point of this paper. Namely, let $\textbf{E}$ be a conditional set and $\{\textbf{E}_\textbf{i}\}_{\textbf{i}\in\textbf{I}}$ a conditional family of conditional subsets of $\textbf{E}$, we define
\[
\conprod_{\textbf{i}\in \textbf{I}} \textbf{E}_\textbf{i}:=
\left[\{\textbf{x}_\textbf{i}\}\in (\underset{\textbf{i}\in\textbf{I}}\sqcup\textbf{E}_\textbf{i})^\textbf{I}\:;\:\textbf{x}_\textbf{i}\in \textbf{E}_\textbf{i}\textnormal{ for each }\textbf{i}\in \textbf{I}\right].
\]

We also have the notions of \textit{conditional binary relation}, \textit{conditional direction}, \textit{conditional partial order} and \textit{conditional total order} (see \cite[Definition 2.15]{key-7}). Further, following \cite{key-7}, for a conditional partially ordered set $(\textbf{E},\leq)$, \textit{conditional upper} (resp. \textit{lower}) \textit{bounds} and \textit{conditional supremums} (resp. \textit{infimums}) are defined as the corresponding notions for classical partially ordered sets. As particular case, we have that a conditional total order $\leq$ can be defined on the conditional rational numbers $\textbf{Q}$ (see 1 of \cite[Examples 2.16]{key-7}).

\begin{defn}
\cite[Example 2.21]{key-7}
A conditional family $\{\textbf{x}_\textbf{i}\}_{\textbf{i}\in\textbf{I}}$ of conditional elements of $\textbf{E}$, with $(\textbf{I},\leq)$ a conditional direction, is called a \textit{conditional net}. If $\textbf{I}=\textbf{N}$, a conditional family $\{\textbf{x}_\textbf{n}\}$ is called a \textit{conditional sequence}. 

Suppose that $\{\textbf{x}_\textbf{n}\}$ and $\{\textbf{y}_\textbf{n}\}$ are conditional sequences. Then, $\{\textbf{y}_\textbf{n}\}$ is said to be a \textit{conditional subsequence} of $\{\textbf{x}_\textbf{n}\}$ if there exists a conditional sequence $\{\textbf{n}_\textbf{k}\}$ in $\textbf{N}$, for which $\textbf{k}<\textbf{k}'$ implies that $\textbf{n}_\textbf{k}<\textbf{n}_\textbf{k}'$, such that $\textbf{y}_\textbf{k}=\textbf{x}_{\textbf{n}_\textbf{k}}$ for all $\textbf{k}\in\textbf{N}$.  
\end{defn}

Let us list more relevant notions. For saving space, we will just give a reference of \cite{key-7}:  

We have \textit{conditional topologies} (and \textit{conditional topological spaces}), \textit{conditionally open sets}, \textit{conditionally closed sets} and \textit{conditional topological bases} (see \cite[Definition 3.1]{key-7}); the notions of \textit{conditional interior} and \textit{conditional closure} (see \cite[Definition 3.3]{key-7}); \textit{conditional neighborhoods} and \textit{conditional bases of neighborhoods} (see \cite[Definition 3.4]{key-7}); \textit{conditionally continuous functions} between conditional topological spaces (see \cite[Definition 3.8]{key-7}). 

We will also say that a conditional function $\textbf{f}:\textbf{E}\rightarrow\textbf{F}$ between conditional topological spaces is \textit{conditionally open}, if $\textbf{f}(\textbf{O})$ is conditionally open in $\textbf{F}$ whenever $\textbf{O}$ is conditonally open in $\textbf{E}$. Further, $\textbf{f}$ is a \textit{conditional homeomorphism} if it is conditionally bijective, conditionally continuous and conditionally open.     

Let $(\textbf{E},\mathcal{T})$ be a conditional topological space, and let $\textbf{F}$ be a conditional subset of $\textbf{E}$, then  $\overline{\textbf{F}}^{\mathcal{T}}$, or simply $\overline{\textbf{F}}$, stands for the conditional closure of $\textbf{F}$.

In \cite{key-7} the following notions are also introduced:

\begin{defn}
Let $(\textbf{E},\mathcal{T})$ be a conditional topological space. A conditional subset $\textbf{F}\sqsubset\textbf{E}$ is \textit{conditionally dense} if $\overline{\textbf{F}}=\textbf{E}$. If $\textbf{E}$ has a conditionally countable subset $\textbf{F}$ which is also conditionally dense, then $\textbf{E}$ is called \textit{conditionally separable}. 
\end{defn}

Also, a notion which will be key in this paper is the following:

\begin{defn}
\cite[Definition 3.24]{key-7} 
A conditionally topological space $(\textbf{E},\mathcal{T})$ is \textit{conditionally compact} if for every conditional family $\{\textbf{O}_\textbf{i}\}_{\textbf{i}\in\textbf{I}}$ of conditionally open conditional subsets such that $\textbf{E}=\sqcup \textbf{O}_\textbf{i}$, there exists a conditionally finite subset $\textbf{F}$ of $\textbf{I}$ such that $\textbf{E}=\underset{\textbf{i}\in\textbf{F}}\sqcup \textbf{O}_\textbf{i}$.
\end{defn}

It is also important to recall the following notions:

\begin{defn}
\cite[Definition 3.19]{key-7}
Let $(\textbf{E},\mathcal{T})$ be a conditional topological space, $\{\textbf{x}_\textbf{i}\}$ a conditional net and $\textbf{x}\in\textbf{E}$. Then
\begin{enumerate}
	\item $\textbf{x}$ is a \textit{conditional limit point} of $\{\textbf{x}_\textbf{i}\}$ if for every conditional neighborhood $\textbf{U}$ of $\textbf{x}$ there exists $\textbf{j}$ such that $\textbf{x}_\textbf{i}\in\textbf{U}$ for all $\textbf{i}\geq\textbf{j}$;
	\item $\textbf{x}$ is a conditional cluster point of $\{\textbf{x}_\textbf{i}\}$ if for every conditional neighborhood $\textbf{U}$ of $\textbf{x}$ and every $\textbf{i}$ there exists $\textbf{j}\geq\textbf{i}$ such that $\textbf{x}_\textbf{j}\in\textbf{U}$.
\end{enumerate}
\end{defn}  

If $\textbf{x}$ is the unique conditional limit point of $\{\textbf{x}_\textbf{i}\}$, we will say that $\{\textbf{x}_\textbf{i}\}$ is \textit{conditionally convergent}, and it \textit{conditionally converges} to $\textbf{x}$. We will write $\nlim \textbf{x}_\textbf{i}=\textbf{x}$.

\begin{defn}
A conditionally topological space $(\textbf{E},\mathcal{T})$ is \textit{conditionally sequentially compact} if every conditional sequence in $\textbf{E}$ has a conditional subsequence with a \textit{conditional limit point}.\footnote[2]{In \cite{key-7}, in the context of conditional metric spaces, it is introduced the notion of \textit{conditional sequentially compactness} in terms  of conditional cluster points instead of limit points. However, in the context of conditional metric spaces, both definitions turn out to equivalent. In fact, in \cite{key-7} this equivalence is implicitly assumed. For instance, in (iii) implies (iv) of Theorem 4.6 of \cite{key-7}, it is supposed that a conditional sequence has a conditionally convergent subsequence, provided that the  conditional space is conditionally sequentially compact.} 
\end{defn}

In \cite{key-7} the \textit{conditional real numbers} are introduced. There is provided a construction of a \textit{conditionally totally ordered field} $(\textbf{R},+,\cdot,\leq)$ which is \textit{conditionally Dedekind complete}; $\textbf{Q}$ is conditionally included in $\textbf{R}$; and the conditional sum, conditional product and conditional order of $\textbf{R}$ extend the conditional sum, conditional product and conditional order of $\textbf{Q}$, respectively (see Definitions 4.1 and 4.3 of \cite{key-7} for terminology, and 2 of Examples 4.2 for reviewing the construction).

We will use the following notion:
\[
\begin{array}{cc}
\textbf{R}^+:=\left[\textbf{r}\in\textbf{R}\:;\: \textbf{r}\geq\textbf{0}\right],  &  \textbf{R}^{++}:=\left[ \textbf{r}\in\textbf{R} \:;\: \textbf{r}>\textbf{0} \right].
\end{array}
\]
Also, for $\textbf{r}\in\textbf{R}$, let us put $a:=\vee\left\{b\in\A\:;\: r|b\in [\textbf{0}]^\sqsubset \right\}$ and $s:=\frac{1}{s}|a + 0|a^c$. We define the \textit{conditional inverse} of $\textbf{r}$ as $\textbf{r}^{-1}:=s|1$.

Drapeau et al.\cite{key-7} provided the definitions of \textit{conditional metric} and \textit{conditional metric space} (see \cite[Definition 4.5]{key-7}). Also, it is introduced the notion of conditionally topology \textit{induced} by a conditional metric, which is \textit{conditionally Hausdorff}.

For given a conditional metric space $(\textbf{E},\textbf{d})$, a conditional sequence $\{\textbf{x}_\textbf{n}\}$ in $\textbf{E}$ is \textit{conditionally Cauchy}, if for each $\textbf{r}\in\textbf{R}^{++}$ there exists $\textbf{n}\in\textbf{N}$ such that $\textbf{d}(\textbf{x}_\textbf{p},\textbf{x}_\textbf{q})\leq\textbf{r}$ for all $\textbf{p},\textbf{q}>\textbf{n}$. $(\textbf{E},\textbf{d})$ is said to be \textit{conditionally complete} if every conditionally Cauchy sequence has a conditional limit.

We will focus our study on \textit{conditional vector spaces} (see \cite[Definion 5.1]{key-7}). 

For given a conditional vector space $\textbf{E}$, we will denote by 
\[
\supp(\textbf{E}):=\vee\{a\in\A\:;\: \exists x|a\in\textbf{E},\:  x|a\in [\textbf{0}]^\sqsubset\}
\]
 the \textit{support} of $\textbf{E}$. For the sake of simplicity and without loss of generality, we will assume hereafter that the support of every conditional vector space considered is $1$.    

Likewise, for $\textbf{x}\in\textbf{E}$ we define its support, which is denoted by $\supp(\textbf{x}):=\vee\{a\in\A\:;\: x|a\in [\textbf{0}]^\sqsubset\}$.

In \cite{key-7} we can find the definition of \textit{conditional subspace} (see \cite[Definion 5.1]{key-7}). Also, there is introduced the following notation: let $\textbf{E}$ be a conditional vector space, and let there be given $\textbf{n}\in\textbf{N}$ with $n=\sum n_i|a_i$, $\{a_i\}\in p(1)$, and $n_i\in\N$ for all $i$; and conditionally finite families $\{\textbf{r}_\textbf{k}\}_{\textbf{1}\leq \textbf{k}\leq\textbf{n}}\sqsubset\textbf{R}$ and $\{\textbf{x}_\textbf{k}\}_{\textbf{1}\leq \textbf{k}\leq\textbf{n}}\sqsubset\textbf{E}$. We define
\[
\begin{array}{cc}
\sum_{\textbf{1}\leq\textbf{k}\leq\textbf{n}}\textbf{n}_\textbf{k}\textbf{x}_\textbf{k}=\textbf{r}, & \textnormal{ where }r:=\sum_{i\in I}\left(\sum_{k=1}^{n_i} r_k x_k\right)|a_i. 
\end{array}
\] 
Also, for given a conditional subset $\textbf{F}$ of $\textbf{E}$, in \cite{key-36} it is defined
\[
\spa_\textbf{R}\:\textbf{F}:=\left[\sum_{\textbf{1}\leq\textbf{k}\leq\textbf{n}}\textbf{n}_\textbf{k}\textbf{x}_\textbf{k} \:;\: \{\textbf{x}_\textbf{k}\}_{\textbf{1}\leq\textbf{k}\leq\textbf{n}}\sqsubset\textbf{E},\: \{\textbf{r}_\textbf{k}\}_{\textbf{1}\leq\textbf{k}\leq\textbf{n}}\sqsubset\textbf{R},\: \textbf{n}\in\textbf{N} \right]. 
\]

\begin{defn}
\cite[Definion 5.1]{key-7}
Suppose that $\textbf{F}$ is a conditional subset of a conditional vector space $\textbf{E}$. Then $\textbf{F}$ is said to be:
\begin{enumerate}
\item conditionally convex if $\textbf{r} \textbf{x}_1 + (1 - \textbf{r})\textbf{x}_2\in \textbf{U}$ for $\textbf{x}_1,\textbf{x}_2\in \textbf{U}$ and $\textbf{r}\in\textbf{R}$, with $\textbf{0}\leq \textbf{r}\leq \textbf{1}$;
\item conditionally absorbent if for every $\textbf{x}\in\textbf{E}$, there is a $\textbf{r}\in\textbf{R}^{++}$ such that $\textbf{x}\in \textbf{r} \textbf{U}$;
\item conditionally balanced if $\textbf{r}\textbf{x}\in\textbf{U}$ for $\textbf{x}\in \textbf{U}$ and $\textbf{r}\in\textbf{R}$ with $|\textbf{r}|\leq \textbf{1}$.
\end{enumerate}
\end{defn}  

\begin{defn}
\cite[Definition 5.4]{key-7}
\label{def: condLocSpac}
Let $\textbf{E}[\mathcal{T}]$ be a conditional topological space. The conditional topology $\mathcal{T}$ is said to be conditionally locally convex  if there is a conditional neighborhood base $\mathcal{U}$ of $\textbf{0}\in\textbf{E}$ such that each $\textbf{U}\in\mathcal{U}$ on $1$ is  conditionally convex. In this case, $\textbf{E}[\mathcal{T}]$ is called a conditional locally convex space.
\end{defn}

An example of a conditional locally convex space is a \textit{conditionally normed space} (see \cite[Definition 5.11]{key-7}). If $(\textbf{E},\Vert\cdot\Vert)$ is a conditionally normed space, then the conditional norm defines a conditional metric $\textbf{d}(\textbf{x},\textbf{y}):=\Vert \textbf{x}-\textbf{y}\Vert$, which induces a conditional topology. A conditionally normed space which is conditionally complete is called a \textit{conditional  Banach space}. For instance, $(\textbf{R},|\cdot|)$ is a conditional Banach space. 

In some points of this paper, we will use \textit{conditionally infinite sums} or \textit{conditional series}. Namely, if $\{\textbf{x}_\textbf{n}\}$
 is a conditional sequence in a conditionally normed space $(\textbf{E},\Vert\cdot\Vert)$, we define the conditional sequence $\{\textbf{s}_\textbf{n}\}$ which is given by $\textbf{s}_\textbf{n}:=\sum_{\textbf{1}\leq\textbf{k}\leq\textbf{n}} \textbf{x}_\textbf{k}$. If $\{\textbf{s}_\textbf{n}\}$ conditionally converges, we will write
\[
\begin{array}{cc}
\sum_{\textbf{k}\geq\textbf{1}} \textbf{x}_\textbf{k}:=\nlim \textbf{s}_\textbf{n},\textnormal{ provided the conditional limit exists}.
\end{array}
\]

For given a conditionally normed space $(\textbf{E},\Vert\cdot\Vert)$, we will denote by
\[
\textbf{B}_\textbf{r}(\textbf{x}):=\left[\textbf{y}\in\textbf{E}\:;\:\Vert\textbf{x}-\textbf{y}\Vert\leq \textbf{r}\right]
\] 
the \textit{conditional ball} of radius $\textbf{r}\in\textbf{R}^{++}$ centered at $\textbf{x}\in\textbf{E}$.
 We also denote by $\textbf{B}_\textbf{E}:=\left[\textbf{x}\in\textbf{E} \:;\: \Vert\textbf{x}\Vert\leq \textbf{1}\right]$ the \textit{conditional unit ball} of $\textbf{E}$. Similarly, $\textbf{S}_\textbf{E}$ stands for the \textit{conditional unit sphere}, i.e. $\textbf{S}_\textbf{E}:=\left[\textbf{x}\in\textbf{E} \:;\: \Vert\textbf{x}\Vert=\textbf{1}\right]$.

Drapeau et al.\cite{key-7} also introduced the notion of \textit{conditionally linear function} between conditional vector spaces (see \cite[Definition 5.1]{key-7}).

Let $\textbf{E},\textbf{F}$ be conditional normed spaces. We define 
\[
\textbf{L}(\textbf{E},\textbf{F}):=\left[\textbf{T}:\textbf{E}\rightarrow\textbf{F}\:;\: \textbf{T}\textnormal{ conditionally linear continuous} \right].
\]
Then, it is not surprising and proved in a similar manner to the classical case that 
\[
\Vert \textbf{T}\Vert:=\nsup\left[ \Vert\textbf{T}(\textbf{x})\Vert \:;\: \textbf{x}\in \textbf{B}_{\textbf{E}}\right]=\nsup\left[\Vert\textbf{T}(\textbf{x})\Vert\:;\: \textbf{x}\in \textbf{S}_{\textbf{E}}\right].
\]
 defines a conditional norm on $\textbf{L}(\textbf{E},\textbf{F})$. Moreover, inspection shows that if $\textbf{F}$ is conditionally Banach, then $\textbf{L}(\textbf{E},\textbf{F})$ is conditionally Banach. 

The particular case of $\textbf{F}=\textbf{R}$ is introduced in \cite{key-7}. We can consider the \textit{conditional topological dual space} defined by $\textbf{E}^*:=\textbf{L}(\textbf{E},\textbf{R})$, which is conditionally Banach as $\textbf{R}$ is conditionally complete. 

The following notion will be needed later:
\begin{defn} 
A conditional subset $\textbf{D}$ of $\textbf{E}^*$ on $1$, is said to be \textit{conditionally total} if for each $\textbf{x}\in\textbf{E}$ such that $\textbf{x}^*(\textbf{x})=\textbf{0}$ for all $\textbf{x}^*\in\textbf{D}$ it holds that $\textbf{x}=\textbf{0}$. 
\end{defn}

We also recall the conditional versions of Hahn-Banach type extension and separation theorems proved in \cite{key-7}:

\begin{thm}
\label{thm: HB}
\cite[Theorem 5.3]{key-7}
Let $\textbf{E}$ be a conditionally vector space, $\textbf{p} : \textbf{E}\rightarrow\textbf{R}$ a conditionally convex function, $\textbf{F}$ a
conditional subspace of $\textbf{E}$, and $\textbf{x}^*_0\in\textbf{F}^*$ such that $\textbf{x}^*_0(\textbf{y}) \leq \textbf{p}(\textbf{y})$ for every $\textbf{y}$ in $\textbf{Y}$. Then there exists $\textbf{x}^*\in\textbf{E}^*$ such that $\textbf{x}^*(\textbf{x})\leq \textbf{p}(\textbf{x})$ for every $\textbf{x}$ in $\textbf{E}$ and $\textbf{x}^* (\textbf{y}) = \textbf{x}^*_0 (\textbf{y})$ for all $\textbf{y}$ in $\textbf{F}$.
\end{thm}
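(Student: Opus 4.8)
The plan is to reproduce the classical two-stage argument within the conditional framework: first a local one-step extension lemma, then a Zorn-type maximality argument. The two conditional ingredients replacing their classical counterparts are the conditional Dedekind completeness of $\textbf{R}$, used to manufacture the value of the extended functional, and the conditional axiom of choice \cite[Theorem 2.26]{key-7}, which supplies both a conditional Zorn's lemma and the selection of a suitable vector.

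\textbf{One-step extension.} Suppose $\textbf{G}$ is a conditional subspace with $\textbf{F}\sqsubset\textbf{G}\sqsubset\textbf{E}$, that $\textbf{g}\in\textbf{G}^*$ conditionally extends $\textbf{x}^*_0$ and satisfies $\textbf{g}\leq\textbf{p}$ on $\textbf{G}$, and that $a\in\A$, $a\neq 0$, is such that some $\textbf{x}_0|a\in\textbf{E}|a$ is conditionally linearly independent of $\textbf{G}$ on $a$. Applying conditional convexity of $\textbf{p}$ at the midpoint of $\textbf{y}_1-\textbf{x}_0$ and $\textbf{y}_2+\textbf{x}_0$, together with conditional linearity of $\textbf{g}$, yields
\[
\textbf{g}(\textbf{y}_1+\textbf{y}_2)\leq\textbf{p}(\textbf{y}_1-\textbf{x}_0)+\textbf{p}(\textbf{y}_2+\textbf{x}_0)\quad\textnormal{for all }\textbf{y}_1,\textbf{y}_2\in\textbf{G},
\]
so the conditional subsets $[\,\textbf{g}(\textbf{y})-\textbf{p}(\textbf{y}-\textbf{x}_0)\:;\:\textbf{y}\in\textbf{G}\,]$ and $[\,\textbf{p}(\textbf{y}+\textbf{x}_0)-\textbf{g}(\textbf{y})\:;\:\textbf{y}\in\textbf{G}\,]$ of $\textbf{R}|a$ are nonempty, and the former conditionally bounds the latter from below. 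By conditional Dedekind completeness there is $\textbf{r}_0\in\textbf{R}|a$ lying conditionally between their conditional supremum and conditional infimum. Define $\textbf{g}'(\textbf{y}+\textbf{t}\textbf{x}_0):=\textbf{g}(\textbf{y})+\textbf{t}\textbf{r}_0$ on the conditional subspace conditionally spanned by $\textbf{G}$ and $\textbf{x}_0$ on $a$, and $\textbf{g}'=\textbf{g}$ on $a^c$. Splitting along the partition $\{\textbf{t}>\textbf{0}\},\{\textbf{t}=\textbf{0}\},\{\textbf{t}<\textbf{0}\}$ of the conditional scalar $\textbf{t}$ shows $\textbf{g}'\leq\textbf{p}$, and stability of the whole construction makes $\textbf{g}'$ a conditionally linear extension of $\textbf{g}$ still lying below $\textbf{p}$. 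One must be careful that $\textbf{x}_0|a$ is conditionally independent of $\textbf{G}$ on $a$, so that $\textbf{y}+\textbf{t}\textbf{x}_0$ is a unique representation on $a$ and $\textbf{g}'$ is well defined; this is why the hypothesis is phrased locally on $a$.

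\textbf{Conditional Zorn's lemma and conclusion.} Let $\mathcal{P}$ be the conditional set of conditional pairs $(\textbf{G},\textbf{g})$ with $\textbf{F}\sqsubset\textbf{G}\sqsubset\textbf{E}$ a conditional subspace, $\textbf{g}\in\textbf{G}^*$ conditionally extending $\textbf{x}^*_0$, and $\textbf{g}\leq\textbf{p}$ on $\textbf{G}$, conditionally ordered by extension. Concatenations of such pairs along partitions of $1$ are again such pairs, so $\mathcal{P}$ is stable; and any conditional chain in $\mathcal{P}$ has a conditional upper bound, namely the conditional union of its domains, which is a conditional subspace by stability, equipped with the common extension. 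The conditional Zorn's lemma then produces a conditionally maximal element $(\textbf{G}_0,\textbf{g}_0)\in\mathcal{P}$. Put $a:=\vee\{b\in\A\:;\:\exists\, x|b\in\textbf{E}|b\textnormal{ with }x|b\notin\textbf{G}_0|b\}$. If $a\neq 0$, use the conditional axiom of choice to pick $\textbf{x}_0|a\in\textbf{E}|a$ conditionally independent of $\textbf{G}_0$ on $a$, apply the one-step extension, and obtain $(\textbf{G}_1,\textbf{g}_1)\in\mathcal{P}$ with $(\textbf{G}_0,\textbf{g}_0)\sqsubset(\textbf{G}_1,\textbf{g}_1)$ yet $\textbf{G}_0|a\neq\textbf{G}_1|a$ — contradicting the conditional maximality of $(\textbf{G}_0,\textbf{g}_0)$. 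Hence $a=0$, $\textbf{G}_0=\textbf{E}$, and $\textbf{x}^*:=\textbf{g}_0$ is the desired extension.

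The main obstacle is not the analytic content, which follows the classical proof line by line, but the bookkeeping imposed by the Boolean parameter: checking that $\mathcal{P}$, the conditional spans, and the chain unions are genuinely stable (hence conditional sets), correctly localising the one-step extension to the element $a$ on which $\textbf{G}_0$ fails to exhaust $\textbf{E}$, and deriving a contradiction from conditional maximality — here it is essential that a strict extension on a single $a\neq 0$ already violates maximality. Making the conditional Zorn's lemma and the selection of a conditionally independent $\textbf{x}_0|a$ precise, both resting on \cite[Theorem 2.26]{key-7}, is the remaining delicate point.
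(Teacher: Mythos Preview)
The paper does not actually prove this theorem: it is stated with a citation to \cite[Theorem 5.3]{key-7} and no proof is given in the present manuscript, so there is no in-paper argument to compare your proposal against.

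That said, your outline is the expected one and matches the strategy of \cite{key-7}: a conditional one-step extension using the conditional Dedekind completeness of $\textbf{R}$ to produce the intermediate value $\textbf{r}_0$, followed by a conditional Zorn argument resting on the conditional axiom of choice. The points you flag as delicate --- stability of $\mathcal{P}$ and of the chain-union construction, the correct localisation of the one-step extension to the Boolean element $a$ on which $\textbf{G}_0\neq\textbf{E}$, and the observation that a strict extension on any $a\neq 0$ already contradicts conditional maximality --- are exactly the places where the conditional bookkeeping matters, and your sketch handles them at the level of detail appropriate for a result that the paper merely quotes.
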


\begin{thm}
\label{thm: HB2}
\cite[Theorem 5.5]{key-7}
Let $\textbf{E}$ be a conditional locally convex space, and let $\textbf{C},\textbf{D}$ be
two conditional convex sets of $\textbf{E}$ on $1$, such that $\textbf{C}\sqcap\textbf{D}=\textbf{E}|0$. Then:
\begin{enumerate}
	\item If $\textbf{C}$ is conditionally open, then there exists $\textbf{x}^*\in\textbf{E}^*$ such that $\textbf{x}^*(\textbf{x}) < \textbf{x}^* (\textbf{y})$ for every $(\textbf{x},\textbf{y})\in\textbf{C}\Join\textbf{D}$.
	\item If $\textbf{C}$ is conditionally compact and $\textbf{D}$ conditionally closed, then there exist $\textbf{x}^*\in\textbf{E}^*$ and $\textbf{r}\in\textbf{R}^{++}$ such that $\textbf{x}^*(\textbf{x}) + \textbf{r} < \textbf{x}^* (\textbf{y})$ for every $(\textbf{x},\textbf{y})\in\textbf{C}\Join\textbf{D}$.
\end{enumerate}
\end{thm}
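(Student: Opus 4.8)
The plan is to transport the classical derivation of the geometric Hahn--Banach theorem to the conditional setting, using the conditional Hahn--Banach extension theorem (Theorem~\ref{thm: HB}) as the analytic engine and taking care that every auxiliary object is stable, i.e.\ compatible with concatenations along partitions of $1$. For part~1, I would fix conditional elements $\textbf{c}_0\in\textbf{C}$, $\textbf{d}_0\in\textbf{D}$, set $\textbf{x}_0:=\textbf{d}_0-\textbf{c}_0$, and introduce $\textbf{U}:=[\textbf{c}-\textbf{d}+\textbf{x}_0\:;\:\textbf{c}\in\textbf{C},\ \textbf{d}\in\textbf{D}]$. This $\textbf{U}$ is a conditional translate of the conditionally open set $\textbf{C}-\textbf{D}$, hence conditionally open, is conditionally convex, and contains $\textbf{0}$, so it is a conditionally absorbent conditional neighborhood of $\textbf{0}$; and $\textbf{C}\sqcap\textbf{D}=\textbf{E}|0$ forces that no representative of $\textbf{x}_0$ lies in $\textbf{U}$ on any $a\neq0$. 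I would then define the conditional gauge $\textbf{p}:\textbf{E}\rightarrow\textbf{R}^+$ by $\textbf{p}(\textbf{x}):=\ninf[\textbf{t}\in\textbf{R}^{++}\:;\:\textbf{x}\in\textbf{t}\textbf{U}]$, and check, using conditional absorbency of $\textbf{U}$, conditional convexity, and the conditional Dedekind completeness of $\textbf{R}$, that $\textbf{p}$ is well defined, conditionally positively homogeneous and conditionally subadditive (hence conditionally convex), that $\textbf{U}=[\textbf{x}\:;\:\textbf{p}(\textbf{x})<\textbf{1}]$, and that $\textbf{p}(\textbf{x}_0)\geq\textbf{1}$ (since $\textbf{x}_0\notin\textbf{U}$).

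Next I would define $\textbf{x}^*_0$ on the conditional line $\textbf{R}\textbf{x}_0$ by $\textbf{x}^*_0(\textbf{t}\textbf{x}_0):=\textbf{t}$; this is conditionally linear, and $\textbf{x}^*_0\leq\textbf{p}$ on $\textbf{R}\textbf{x}_0$ after concatenating on the Boolean piece where $\textbf{t}\geq\textbf{0}$ (there $\textbf{t}\leq\textbf{t}\,\textbf{p}(\textbf{x}_0)=\textbf{p}(\textbf{t}\textbf{x}_0)$) and its complement (there $\textbf{t}<\textbf{0}\leq\textbf{p}(\textbf{t}\textbf{x}_0)$). Theorem~\ref{thm: HB} then yields $\textbf{x}^*\in\textbf{E}^*$ with $\textbf{x}^*\leq\textbf{p}$ and $\textbf{x}^*=\textbf{x}^*_0$ on $\textbf{R}\textbf{x}_0$; conditional continuity of $\textbf{x}^*$ follows from its boundedness on the conditional neighborhood $\textbf{U}$ of $\textbf{0}$. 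Finally, for $(\textbf{c},\textbf{d})\in\textbf{C}\Join\textbf{D}$ we have $\textbf{c}-\textbf{d}+\textbf{x}_0\in\textbf{U}$, so $\textbf{x}^*(\textbf{c})-\textbf{x}^*(\textbf{d})+\textbf{1}=\textbf{x}^*(\textbf{c}-\textbf{d}+\textbf{x}_0)\leq\textbf{p}(\textbf{c}-\textbf{d}+\textbf{x}_0)<\textbf{1}$, i.e.\ $\textbf{x}^*(\textbf{c})<\textbf{x}^*(\textbf{d})$, proving 1.

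For part~2, I would first produce a conditionally open, conditionally convex conditional neighborhood $\textbf{V}$ of $\textbf{0}$ with $(\textbf{C}+\textbf{V})\sqcap\textbf{D}=\textbf{E}|0$: since $\textbf{E}$ is conditionally locally convex and $\textbf{D}$ is conditionally closed, each $\textbf{c}\in\textbf{C}$ has a conditionally convex conditionally open neighborhood $\textbf{W}_\textbf{c}$ of $\textbf{0}$ with $(\textbf{c}+\textbf{W}_\textbf{c})\sqcap\textbf{D}=\textbf{E}|0$; the conditional family $\{\textbf{c}+\tfrac12\textbf{W}_\textbf{c}\}$ conditionally covers the conditionally compact $\textbf{C}$, a conditionally finite subcover exists, and $\textbf{V}$ is the conditional intersection of the finitely many corresponding $\tfrac12\textbf{W}_\textbf{c}$. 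Applying part~1 to $\textbf{C}+\textbf{V}$ and $\textbf{D}$ gives $\textbf{x}^*\in\textbf{E}^*$ with $\textbf{x}^*(\textbf{c})+\textbf{x}^*(\textbf{v})<\textbf{x}^*(\textbf{d})$ for all $\textbf{c}\in\textbf{C}$, $\textbf{v}\in\textbf{V}$, $\textbf{d}\in\textbf{D}$. The strict conditional inequality forces $\textbf{x}^*$ to be nonzero on every $a\neq0$, so I can pick $\textbf{e}_0\in\textbf{E}$ with $\textbf{x}^*(\textbf{e}_0)=\textbf{1}$ (rescaling by a conditional inverse) and, by conditional absorbency of $\textbf{V}$, $\textbf{s}\in\textbf{R}^{++}$ with $\textbf{s}^{-1}\textbf{e}_0\in\textbf{V}$; then $\textbf{x}^*(\textbf{c})+\textbf{s}^{-1}=\textbf{x}^*(\textbf{c}+\textbf{s}^{-1}\textbf{e}_0)<\textbf{x}^*(\textbf{d})$, and $\textbf{r}:=\textbf{s}^{-1}\cdot(\textbf{1}/\textbf{2})\in\textbf{R}^{++}$ satisfies $\textbf{x}^*(\textbf{c})+\textbf{r}<\textbf{x}^*(\textbf{d})$, proving 2.

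The hard part will not be the classical bookkeeping but making sure everything stays within the conditional world: that the conditional gauge $\textbf{p}$ is a genuine conditional function into $\textbf{R}$ with no degeneracy on a nonzero Boolean piece (this is exactly where conditional absorbency of $\textbf{U}$ and conditional Dedekind completeness of $\textbf{R}$, together with stability of $\ninf$, are needed); that the cover of $\textbf{C}$ in part~2 admits a \emph{conditionally} finite subcover, which is the only place the full strength of conditional compactness of $\textbf{C}$ enters and is precisely what upgrades the separating gap from some $\textbf{r}\in\textbf{R}^+$ possibly vanishing on a nonzero piece to an honest $\textbf{r}\in\textbf{R}^{++}$; and that the functional delivered by Theorem~\ref{thm: HB} is conditionally continuous and nontrivial on every $a\neq0$.
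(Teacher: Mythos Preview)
The paper does not supply its own proof of this statement: Theorem~\ref{thm: HB2} is merely recalled from \cite[Theorem~5.5]{key-7} as background, so there is no in-paper argument to compare against. Your proposal follows the classical route to the geometric Hahn--Banach separation theorem (gauge of an open convex neighborhood, extension of a linear functional from a line via Theorem~\ref{thm: HB}, then a compactness argument to upgrade to strict separation with a uniform gap), carefully annotated with the stability and Boolean-piecewise checks needed in the conditional framework; this is the expected strategy and, modulo the routine verifications you flag at the end, it is sound. One cosmetic remark: in the last line of part~2 the halving is unnecessary, since $\textbf{x}^*(\textbf{c})+\textbf{s}^{-1}<\textbf{x}^*(\textbf{d})$ already gives the conclusion with $\textbf{r}:=\textbf{s}^{-1}\in\textbf{R}^{++}$.
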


Finally, let us give some comments on conditionally finitely generated vector spaces which will be needed later:

\begin{defn}
A conditional vectorial space $\textbf{E}$ is finitely generated is there exists a conditionally finitely subset $[\textbf{x}_\textbf{k}\:;\:\textbf{1}\leq\textbf{k}\leq\textbf{n}]$ of $\textbf{E}$ such that $\textbf{E}=\spa_\textbf{R}[\textbf{x}_\textbf{k}\:;\:\textbf{1}\leq\textbf{k}\leq\textbf{n}]$.  
\end{defn}

A conditional version of Heine-Borel theorem (see \cite[Theorem 5.5.8]{key-36}) was proved for the conditional normed space $\textbf{R}^\textbf{n}:=\textbf{R}^{[\textbf{k}\in\textbf{N}\:;\: \textbf{1}\leq\textbf{k}\leq\textbf{n}]}.$

 The Proposition below shows that this result is still valid for conditionally finitely generated  normed spaces. This is not surprising, hence the proof has been placed in an appendix at the end of this paper (see Proposition \ref{prop: Heine-BorelExt}):

\begin{prop}
\label{prop: Heine-Borel}
Let $(\textbf{E},\Vert\cdot\Vert)$ be conditionally normed space which is also conditionally finitely generated. Then, every conditionally bounded and conditionally closed subset $\textbf{K}$ of $\textbf{E}$ is conditionally compact.
\end{prop}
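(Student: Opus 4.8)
The plan is to exhibit $\textbf{E}$ as conditionally homeomorphic to some $\textbf{R}^\textbf{m}:=\textbf{R}^{[\textbf{k}\in\textbf{N}\:;\:\textbf{1}\leq\textbf{k}\leq\textbf{m}]}$ and then to transport the conditional Heine--Borel theorem for $\textbf{R}^\textbf{m}$ (\cite[Theorem 5.5.8]{key-36}) across that homeomorphism. First I would pass from the given conditionally finite generating family $[\textbf{x}_\textbf{k}\:;\:\textbf{1}\leq\textbf{k}\leq\textbf{n}]$ to a conditional basis: extracting a conditionally linearly independent subfamily that still conditionally spans $\textbf{E}$ yields $[\textbf{e}_\textbf{k}\:;\:\textbf{1}\leq\textbf{k}\leq\textbf{m}]$ with $\textbf{m}\leq\textbf{n}$ a conditional natural number (the conditional dimension need not be constant along a partition of $1$, which is exactly what the conditional integer $\textbf{m}$ records). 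Define the stable map $\textbf{T}:\textbf{R}^\textbf{m}\rightarrow\textbf{E}$ by $\textbf{T}(\textbf{r}_\textbf{1},\dots,\textbf{r}_\textbf{m}):=\sum_{\textbf{1}\leq\textbf{k}\leq\textbf{m}}\textbf{r}_\textbf{k}\textbf{e}_\textbf{k}$; it is conditionally linear, conditionally surjective because $\{\textbf{e}_\textbf{k}\}$ conditionally spans $\textbf{E}$, and conditionally injective because $\{\textbf{e}_\textbf{k}\}$ is conditionally linearly independent. Equipping $\textbf{R}^\textbf{m}$ with the conditional maximum norm $\Vert\cdot\Vert_\infty$ (whose conditionally closed and conditionally bounded conditional subsets are conditionally compact by conditional Heine--Borel for $\textbf{R}^\textbf{m}$), the estimate $\Vert\textbf{T}(\textbf{r}_\textbf{1},\dots,\textbf{r}_\textbf{m})\Vert\leq\left(\sum_{\textbf{1}\leq\textbf{k}\leq\textbf{m}}\Vert\textbf{e}_\textbf{k}\Vert\right)\Vert(\textbf{r}_\textbf{1},\dots,\textbf{r}_\textbf{m})\Vert_\infty$ shows $\textbf{T}$ is conditionally Lipschitz, hence conditionally continuous.

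The crux is the conditional continuity of $\textbf{T}^{-1}$, equivalently a conditional lower bound $\Vert\textbf{T}(\textbf{r})\Vert\geq\textbf{c}\,\Vert\textbf{r}\Vert_\infty$ for all $\textbf{r}\in\textbf{R}^\textbf{m}$ and some $\textbf{c}\in\textbf{R}^{++}$. For this I would consider the conditional unit sphere $\textbf{S}:=[\,\textbf{r}\in\textbf{R}^\textbf{m}\:;\:\Vert\textbf{r}\Vert_\infty=\textbf{1}\,]$, which is conditionally closed and conditionally bounded, hence conditionally compact; the conditional function $\textbf{r}\mapsto\Vert\textbf{T}(\textbf{r})\Vert$ is conditionally continuous, so its conditional image in $\textbf{R}$ is conditionally compact, thus conditionally closed and conditionally bounded, and therefore its conditional infimum $\textbf{c}$ is attained on $\textbf{S}$. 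Moreover $\textbf{c}\in\textbf{R}^{++}$: any $\textbf{r}\in\textbf{S}$ has $\supp(\textbf{r})=1$ (if $r|a=0|a$ for some $a\neq 0$ then $\Vert r\Vert_\infty|a=0|a$, contradicting $\Vert\textbf{r}\Vert_\infty=\textbf{1}$), so conditional linear independence of $\{\textbf{e}_\textbf{k}\}$ gives $\supp(\textbf{T}(\textbf{r}))=1$ and hence $\Vert\textbf{T}(\textbf{r})\Vert>\textbf{0}$ on $1$; since $\textbf{c}$ is attained at such an $\textbf{r}$, we get $\textbf{c}>\textbf{0}$ on $1$. Conditional homogeneity of $\textbf{T}$ and of $\Vert\cdot\Vert_\infty$ then upgrades the inequality from $\textbf{S}$ to all of $\textbf{R}^\textbf{m}$, so $\textbf{T}^{-1}$ is conditionally Lipschitz and $\textbf{T}$ is a conditional homeomorphism.

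Finally, given $\textbf{K}$ conditionally closed and conditionally bounded in $\textbf{E}$, the conditional subset $\textbf{T}^{-1}(\textbf{K})$ is conditionally closed (conditional preimage of a conditionally closed conditional subset under the conditionally continuous $\textbf{T}$) and conditionally bounded (as $\textbf{T}^{-1}$ is conditionally Lipschitz), hence conditionally compact by conditional Heine--Borel for $\textbf{R}^\textbf{m}$; consequently $\textbf{K}=\textbf{T}(\textbf{T}^{-1}(\textbf{K}))$ is conditionally compact, being the conditionally continuous image of a conditionally compact conditional set. The two places I expect trouble are (i) obtaining the conditional basis, i.e. a conditional Steinitz-type exchange argument, with the subtlety that the resulting conditional dimension $\textbf{m}$ may genuinely vary along a partition of $1$, and (ii) the lower bound for $\textbf{T}^{-1}$, where one must ensure that the strict positivity of $\Vert\textbf{T}(\cdot)\Vert$ on $\textbf{S}$ survives passage to the conditional infimum over the conditionally compact $\textbf{S}$; this is precisely the point at which the support (``no null restriction'') bookkeeping of the conditional framework is essential.
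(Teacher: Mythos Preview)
Your proposal is correct and follows essentially the same route as the paper: one produces a conditional linear bijection $\textbf{T}:\textbf{R}^\textbf{m}\rightarrow\textbf{E}$ from a conditional basis, proves $\textbf{T}$ is a conditional homeomorphism by bounding $\Vert\textbf{T}(\cdot)\Vert$ from above trivially and from below via attainment of the conditional minimum of $\Vert\textbf{T}(\cdot)\Vert$ on the conditionally compact unit sphere of $\textbf{R}^\textbf{m}$, and then transports conditional Heine--Borel for $\textbf{R}^\textbf{m}$ through $\textbf{T}$. The only cosmetic differences are that the paper uses $\Vert\cdot\Vert_\textbf{1}$ rather than $\Vert\cdot\Vert_\infty$ on $\textbf{R}^\textbf{m}$ and relegates the existence of the conditional basis (your point (i)) to a separate proposition whose proof it omits.
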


\section{Conditional versions of some basic results of functional analysis}

In this section we will show conditional versions of well-known results of classical functional analysis,  which will be needed to prove the main results. 

\subsection{Conditional versions of Baire Category theorem and Uniform  Boundedness Principle}

\begin{thm}
\label{thm: baire}[Conditional version of Baire Category theorem]
Suppose that a conditionally complete metric space $(\textbf{E},\textbf{d})$ is the conditional union of a conditionally countable family $\{\textbf{E}_\textbf{n}\}_{\textbf{n}\in\textbf{N}}$ of conditionally closed subsets, then there exist a conditional ball $\textbf{B}_{\textbf{r}}(\textbf{x})$ on $1$ and $\textbf{n}\in\textbf{N}$ such that $\textbf{B}_{\textbf{r}}(\textbf{x})\sqsubset \textbf{E}_\textbf{n}$.  
\end{thm}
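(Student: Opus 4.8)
The plan is to argue by contradiction, combining an exhaustion argument in the complete Boolean algebra $\A$ with the classical nested-balls proof of Baire's theorem, carried out conditionally. The crux is the following \emph{local claim}: for every $e\in\A$ with $e\neq 0$ there exist $a\in\A$ with $0\neq a\leq e$ and $\textbf{n}\in\textbf{N}$ such that $\textbf{E}_\textbf{n}|a$ contains a conditional ball on $a$. Granting this, put $b:=\bigvee\{a\in\A\:;\:\exists\,\textbf{n}\in\textbf{N},\ \textbf{E}_\textbf{n}|a\textnormal{ contains a conditional ball on }a\}$. If $b\neq 1$, the local claim applied to $e:=b^c$ produces a nonzero $a\leq b^c$ which also satisfies $a\leq b$, hence $a\leq b\wedge b^c=0$, a contradiction; thus $b=1$. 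A standard disjointification in $\A$ then yields a partition $\{a_j\}\in p(1)$ together with $\textbf{n}_j\in\textbf{N}$ and conditional balls $\textbf{B}_{r_j}(x_j)|a_j\sqsubset\textbf{E}_{\textbf{n}_j}|a_j$; setting $\textbf{n}:=\sum \textbf{n}_j|a_j\in\textbf{N}$, $\textbf{x}:=\sum x_j|a_j\in\textbf{E}$ and $\textbf{r}:=\sum r_j|a_j$ — which lies in $\textbf{R}^{++}$, since strict positivity on every member of a partition of $1$ forces strict positivity — one gets $\textbf{B}_\textbf{r}(\textbf{x})|a_j=\textbf{B}_{r_j}(x_j)|a_j\sqsubset\textbf{E}_\textbf{n}|a_j$ for every $j$, so by gluing $\textbf{B}_\textbf{r}(\textbf{x})\sqsubset\textbf{E}_\textbf{n}$, which is the assertion of the theorem.

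To prove the local claim, fix $e\neq 0$ and suppose, towards a contradiction, that no $\textbf{E}_\textbf{n}|a$ contains a conditional ball on $a$ for any $\textbf{n}\in\textbf{N}$ and any nonzero $a\leq e$ (this is the negation of the claim, once one observes that whenever $\textbf{E}_\textbf{n}|a$ contains a conditional ball on $a$ then already $\textbf{E}_m|a'$ contains one on $a'$ for a suitable classical $m\in\N$ and nonzero $a'\leq a$). Work over $\A_e$: the conditional subset $\textbf{E}':=\textbf{E}|e$ is again a conditionally complete metric space (a conditional Cauchy sequence in $\textbf{E}'$ extends, by concatenating with a constant on $e^c$, to one in $\textbf{E}$, whose conditional limit restricts to the desired limit), and, restriction commuting with the conditional union, $\textbf{E}'=\bigsqcup_{\textbf{n}\in\textbf{N}}\textbf{E}_\textbf{n}|e$. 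As in the classical setting, the assumption forces each conditional complement $\textbf{E}_\textbf{n}^\sqsubset:=(\textbf{E}_\textbf{n}|e)^\sqsubset$ to be conditionally open and conditionally dense in $\textbf{E}'$: if some $\textbf{E}_\textbf{n}^\sqsubset$ failed to be conditionally dense, $\bigl(\overline{\textbf{E}_\textbf{n}^\sqsubset}\bigr)^\sqsubset$ would be a nonzero conditionally open subset contained in $\textbf{E}_\textbf{n}|e$, and every nonzero conditionally open subset of a conditional metric space contains a conditional ball on a nonzero part (shrink a conditional open ball about one of its conditional elements to half its radius), against the assumption.

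Next, carrying out the classical recursive construction conditionally — appealing at each step to the conditional axiom of choice \cite[Theorem 2.26]{key-7} to pick centres, and using the stability of all data to obtain honest conditional sequences — I would produce $\{\textbf{x}_\textbf{k}\}$ in $\textbf{E}'$ and $\{\textbf{r}_\textbf{k}\}$ in $\textbf{R}^{++}$, with radii conditionally tending to $\textbf{0}$, such that $\textbf{B}_{\textbf{r}_\textbf{0}}(\textbf{x}_\textbf{0})\sqsubset\textbf{E}_\textbf{0}^\sqsubset$ and
\[
\textbf{B}_{\textbf{r}_{\textbf{k}+\textbf{1}}}(\textbf{x}_{\textbf{k}+\textbf{1}})\sqsubset\textbf{B}_{\textbf{r}_\textbf{k}}(\textbf{x}_\textbf{k})\sqcap\textbf{E}_{\textbf{k}+\textbf{1}}^\sqsubset\qquad\textnormal{for every }\textbf{k}\in\textbf{N}.
\]
The inductive step works because the conditional open ball of radius $\textbf{r}_\textbf{k}$ about $\textbf{x}_\textbf{k}$ is a nonempty conditionally open subset of $\textbf{E}'$ on $e$, hence its conditional intersection with the conditionally open conditionally dense set $\textbf{E}_{\textbf{k}+\textbf{1}}^\sqsubset$ is again nonempty conditionally open on $e$, inside which one picks a conditional ball and halves its radius (also forcing $\textbf{r}_{\textbf{k}+\textbf{1}}\leq\textbf{r}_\textbf{k}/\textbf{2}$). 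The balls being conditionally nested, $\textbf{d}(\textbf{x}_\textbf{p},\textbf{x}_\textbf{q})\leq\textbf{2}\,\textbf{r}_\textbf{k}$ whenever $\textbf{p},\textbf{q}\geq\textbf{k}$, so $\{\textbf{x}_\textbf{k}\}$ is conditionally Cauchy and, by conditional completeness, conditionally converges to some $\textbf{x}_*\in\textbf{E}'$. Each $\textbf{B}_{\textbf{r}_\textbf{k}}(\textbf{x}_\textbf{k})$ is conditionally closed and contains $\textbf{x}_\textbf{p}$ for all $\textbf{p}\geq\textbf{k}$, so $\textbf{x}_*\in\textbf{B}_{\textbf{r}_\textbf{k}}(\textbf{x}_\textbf{k})\sqsubset\textbf{E}_\textbf{k}^\sqsubset$ for every $\textbf{k}\in\textbf{N}$; as $\textbf{E}_\textbf{k}|e$ and $\textbf{E}_\textbf{k}^\sqsubset$ are conditionally complementary and $e\neq 0$, this gives $\textbf{x}_*\notin\textbf{E}_\textbf{k}|e$ for every $\textbf{k}\in\textbf{N}$, whence $\textbf{x}_*\notin\bigsqcup_{\textbf{k}\in\textbf{N}}\textbf{E}_\textbf{k}|e=\textbf{E}'$, contradicting $\textbf{x}_*\in\textbf{E}'$. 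This establishes the local claim, and with it the theorem.

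The step I expect to be the main obstacle is turning the nested-balls recursion into genuine conditional sequences indexed by $\textbf{N}$ rather than an external recursion, which is why the conditional axiom of choice and the stability of the data are invoked throughout. The remaining ingredients — restriction commuting with $\sqcup$, $\sqcap$, conditional closure and the conditional ball operators; the sequential characterization of conditionally closed sets; the behaviour of conditional density and of conditional complements in $\textbf{P}(\textbf{E}')$; and the disjointification yielding a bona fide partition of $1$ — are routine adaptations of the classical arguments.
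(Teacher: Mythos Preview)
Your proposal is correct and follows essentially the same route as the paper: define the supremum $b$ (the paper calls it $d$), show it is attained by disjointification, reduce to the complementary piece, and then run the classical nested-balls argument conditionally to reach a contradiction.

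One small point worth noting: the obstacle you flag --- turning the recursion into a genuine conditional sequence indexed by $\textbf{N}$ --- is handled in the paper more simply than you anticipate. The paper performs the recursion \emph{externally} over $k\in\N$, obtaining classical data $\textbf{x}_k,\textbf{r}_k$ for each $k\in\N$, and only afterwards lifts to a conditional sequence by setting $x_n:=\sum x_{n_i}|a_i$ for $n=\sum n_i|a_i$. This avoids any appeal to the conditional axiom of choice in the recursive step; the conditional structure enters only at the end, through stability. Your concern about needing a conditional recursion is thus unnecessary, and the argument is slightly more elementary than you suggest. Also, your final contradiction (``$\textbf{x}_*\notin\textbf{E}_\textbf{k}|e$ for every $\textbf{k}\in\textbf{N}$, whence $\textbf{x}_*\notin\bigsqcup_\textbf{k}\textbf{E}_\textbf{k}|e$'') is more cleanly phrased, as the paper does, by picking a nonzero $b$ on which $\textbf{n}|b=k|b$ for some classical $k$ and observing $x_*|b\in\textbf{E}_k\sqcap\textbf{B}_k$; the conditional-union formulation you use requires a word about why membership in each $\textbf{E}_\textbf{k}^\sqsubset$ forces membership in the conditional intersection $\sqcap_\textbf{k}\textbf{E}_\textbf{k}^\sqsubset=(\sqcup_\textbf{k}\textbf{E}_\textbf{k})^\sqsubset$.
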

\begin{proof}

Suppose $\textbf{E}=\sqcup_{\textbf{n}\in\textbf{N}} \textbf{E}_{\textbf{n}}$ and define 
\[
d:=\vee\left\{ a\in\A \:;\: \textnormal{ there exists } \textbf{B}_{\textbf{r}}(\textbf{x})|a\sqsubset E_{\textbf{n}}
\textnormal{ for some }\textbf{n}\in\textbf{N}\textnormal{ and }\textbf{r}\in\textbf{R}^{++} \right\}.
\]
We claim that $d$ is attained. Indeed, let us choose some family $\{a_j\}\subset\A$ with $d=\vee a_j$ and such that for each $j$ there exist $\textbf{r}_j\in\textbf{R}^{++}$, $\textbf{n}_j\in\textbf{N}$, and $\textbf{x}_j\in\textbf{E}$ so that $\textbf{B}_{\textbf{r}_j}(\textbf{x}_j)|a\sqsubset \textbf{E}_{\textbf{n}_j}$ for all $j$. Due to the well-ordering theorem, we can find $\{d_j\}\in p(d)$ such that $d_j\leq a_j$. We have that $\textbf{B}_{\textbf{r}_j}(\textbf{x}_j)|d_j\sqsubset \textbf{E}_{\textbf{n}_j}$ for each $j$. If we take $r=\sum r_j|d_j\in N$, $x=\sum x_j|d_j\in E$ and $n=\sum n_j|d_j\in N$, it follows that $\textbf{B}_{\textbf{r}}(\textbf{x})|d\sqsubset \textbf{E}_{\textbf{n}}$.

If $d=1$ we are done as $d$ is attained. In other case, by consistency, we may assume w.l.g. $d=0$ by arguing on $d^c$.
 
If so, let us put $\textbf{E}_1:=\textbf{E}_{\textbf{1}}$. We claim that the conditional set $\textbf{E}_1^{\sqsubset}$ is on $1$. Indeed, let
\[
d_1:=\vee\left\{a\in\A\:;\: \textnormal{ there exists }x\in E\textnormal{ with }x|a\in \textbf{E}_1^{\sqsubset}\right\}.
\]

Then, since $\textbf{E}|d_1^c\sqsubset\textbf{E}_1\sqcup\textbf{E}_1^{\sqsubset}$ and $\textbf{E}|d_1^c\sqcap\textbf{E}_1^{\sqsubset}=\textbf{E}|0$, it follows $\textbf{E}|d_1^c\sqsubset \textbf{E}_1$, which implies $d_1^c=0$, because $d=0$. Given that $\textbf{E}_1^{\sqsubset}$ is conditionally open, it must therefore conditionally contain a conditional ball $\textbf{B}_1:=\textbf{B}_{\textbf{r}_1}(\textbf{x}_1)$ with $\textbf{0}<\textbf{r}_1<\textbf{1}/\textbf{2}$, which can be taken on $1$. Now look at $\textbf{E}_2:=\textbf{E}_{\textbf{2}}$. We claim that the conditional set $\textbf{E}_2^{\sqsubset}\sqcap \textbf{B}_{\textbf{r}_1/\textbf{2}}(\textbf{x}_1)$ is on $1$. Indeed, let us consider
\[
d_2:=\vee\left\{a\in\A\:;\: \textnormal{ there exists }x\in E\textnormal{ with }x|a\in \textbf{E}_2^{\sqsubset}\sqcap \textbf{B}_{\textbf{r}_1/\textbf{2}}(\textbf{x}_1)\right\}.
\]
By doing so, we obtain that $\textbf{B}_{\textbf{r}_1/\textbf{2}}(\textbf{x}_1)|d_2^c\sqsubset\textbf{E}_2$, and necessarily $d_2^c=0$, since $d=0$. Being $\textbf{E}_2^{\sqsubset}\sqcap\textbf{B}_{\textbf{r}_1/\textbf{2}}(\textbf{x}_1)$ conditionally open and on $1$, it conditionally contains a conditional ball $\textbf{B}_2:=\textbf{B}_{\textbf{r}_2}(\textbf{x}_2)$ with $\textbf{0}<\textbf{r}_2<\textbf{1}/\textbf{4}$. 

By induction, we obtain a sequence $\textbf{B}_k:=\textbf{B}_{\textbf{r}_k}(\textbf{x}_k)$ of conditional balls on $1$ such that $\textbf{0}<\textbf{r}_k<\textbf{1}/\textbf{2}^\textbf{k}$, $\textbf{B}_{k+1}\sqsubset \textbf{B}_{\textbf{r}_k /\textbf{2}}(\textbf{x}_k)$, and $\textbf{B}_k \sqcap \textbf{E}_{k}=\textbf{E}|0$ for all $k\in\N$.

For each $\textbf{n}=n|1\in\textbf{N}$ with $n=\sum_{i\in I} n_i|a_i$, where $\{a_i\}\in p(1)$ and $n_i\in\N$ for all $j$, we define $\textbf{x}_\textbf{n}:=x_n|1$ and $\textbf{z}_\textbf{n}:=z_n|1$ with $x_n:=\sum x_{n_i}|a_i$ and $r_n:=\sum {r}_{n_i}|a_i$. Then $\{\textbf{x}_\textbf{n}\}_{\textbf{n}\in\textbf{N}}$ and $\{\textbf{r}_\textbf{n}\}_{\textbf{n}\in\textbf{N}}$ are  conditional sequences.

For $(\textbf{n},\textbf{m})\in\textbf{N}\Join\textbf{N}$ with $n=\sum_i n_i|b_i$ and $m=\sum_j m_j|c_j$, let us define $d_{i,j}:=b_i\wedge c_j$ and $a:=\vee\{b\in\A\:;\: m|b=n|b\}$. Then 

\[
d(x_n,x_m)=d(x_{n\vee m},{x}_{{n}\wedge {m}})\leq 0|a + \sum_{i,j} \left(\sum_{k=n_i\wedge m_j}^{(n_i\vee m_j)-1}{d}({x}_{k},{x}_{k+1}) \right)|a^c\wedge d_{i,j}\leq \frac{1}{2^{m \wedge n}}.
\]   

Consequently, $\{\textbf{x}_\textbf{n}\}_{\textbf{n}\in\textbf{N}}$ is a conditional Cauchy sequence, and given that $\textbf{E}$ is conditionally complete, it conditionally converges to some $\textbf{x}\in \textbf{E}$.

Now, since for all $\textbf{n}<\textbf{m}$
\[
\textbf{d}(\textbf{x}_{\textbf{n}},\textbf{x})\leq \textbf{d}(\textbf{x}_{\textbf{n}},\textbf{x}_{\textbf{m}})+\textbf{d}(\textbf{x}_{\textbf{m}},\textbf{x})<\frac{\textbf{r}_\textbf{n}}{\textbf{2}}+\textbf{d}(\textbf{x}_{\textbf{m}},\textbf{x})
\]
and $\nlim_{\textbf{m}} \textbf{d}(\textbf{x}_{\textbf{m}},\textbf{x})=0$, it follows that $\textbf{d}(\textbf{x}_\textbf{n},\textbf{x})\leq\frac{\textbf{r}_\textbf{n}}{\textbf{2}}$. In particular, it means that $\textbf{x}\in \textbf{B}_{\textbf{r}_\textbf{n}}(\textbf{x}_{\textbf{n}})$ for every $\textbf{n}\in\textbf{N}$. 

On the other hand, since $\textbf{E}=\sqcup_{\textbf{n}\in\textbf{N}} \textbf{E}_\textbf{n}$, we have that $\textbf{x}\in\textbf{E}_\textbf{n}$ for some $\textbf{n}$. Let us take $b\in\A$, $b\neq 0$, such that $n|b=k|b$ for some $k\in\N$. Then $x|b\in \textbf{E}_k\sqcap\textbf{B}_k$. But this is a contradiction.

\end{proof}

\begin{thm}
\label{thm: uniformBoundeness}[Conditional version of the Uniform Boundedness Principle] Let $\textbf{E}$ be a conditional Banach space,  and let $\textbf{F}$ be a conditionally normed space. Suppose that $\textbf{S}$ is a conditional subset of $\textbf{L}\left(\textbf{E},\textbf{F}\right)$ on $1$, such that for every $\textbf{x}\in\textbf{E}$ there exists $\textbf{r}(\textbf{x})\in\textbf{R}$ with $|\textbf{T}(\textbf{x})|\leq \textbf{r}(\textbf{x})$ for all $\textbf{T}\in \textbf{S}$. Then there exists $\textbf{s}\in \textbf{R}^{++}$ such that $\Vert \textbf{T} \Vert\leq \textbf{s}$ for all $\textbf{T}\in \textbf{S}$. 
\end{thm}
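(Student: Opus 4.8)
The plan is to transport the classical Baire-category proof of the Uniform Boundedness Principle to the conditional language, using $\textbf{N}$ as the index set. For each $\textbf{n}\in\textbf{N}$ I would set
\[
\textbf{E}_\textbf{n}:=\left[\textbf{x}\in\textbf{E}\:;\:\Vert\textbf{T}(\textbf{x})\Vert\leq\textbf{n}\textnormal{ for all }\textbf{T}\in\textbf{S}\right]=\underset{\textbf{T}\in\textbf{S}}{\sqcap}\big(\Vert\cdot\Vert\circ\textbf{T}\big)^{-1}\!\left([\textbf{t}\in\textbf{R}\:;\:\textbf{t}\leq\textbf{n}]\right),
\]
the conditional intersection being taken over the conditional subset $\textbf{S}$. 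The first step is to check that each $\textbf{E}_\textbf{n}$ is a conditionally closed conditional subset on $1$: it contains $\textbf{0}$, and it is a conditional intersection of conditional preimages, under the conditionally continuous conditional functions $\Vert\cdot\Vert\circ\textbf{T}$, of the conditionally closed conditional subset $[\textbf{t}\in\textbf{R}\:;\:\textbf{t}\leq\textbf{n}]$ of $\textbf{R}$; conditional preimages and conditional intersections of conditionally closed conditional subsets are conditionally closed. One also verifies, from the stability of the conditional order, of the conditional norm and of the preimage/intersection operations in the parameter $\textbf{n}$, that $\{\textbf{E}_\textbf{n}\}_{\textbf{n}\in\textbf{N}}$ is a conditional family, hence a conditionally countable family of conditionally closed conditional subsets.

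Next I would show $\textbf{E}=\underset{\textbf{n}\in\textbf{N}}{\sqcup}\textbf{E}_\textbf{n}$. Given any conditional element $\textbf{x}\in\textbf{E}$, the hypothesis provides $\textbf{r}(\textbf{x})\in\textbf{R}$ with $\Vert\textbf{T}(\textbf{x})\Vert\leq\textbf{r}(\textbf{x})$ for all $\textbf{T}\in\textbf{S}$, and the conditional Archimedean property of $\textbf{R}$ (which follows from $\textbf{N}\sqsubset\textbf{R}$ together with the conditional Dedekind completeness of $\textbf{R}$: otherwise $\textbf{N}$ would be conditionally bounded above on some $a\neq 0$) yields $\textbf{n}\in\textbf{N}$ with $\textbf{r}(\textbf{x})\leq\textbf{n}$, so $\textbf{x}\in\textbf{E}_\textbf{n}\sqsubset\underset{\textbf{n}\in\textbf{N}}{\sqcup}\textbf{E}_\textbf{n}$. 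Since this holds for every conditional element, $\underset{\textbf{n}\in\textbf{N}}{\sqcup}\textbf{E}_\textbf{n}=\textbf{E}$. As $\textbf{E}$ is a conditional Banach space, it is a conditionally complete conditional metric space for the conditional norm metric, so Theorem \ref{thm: baire} applies and produces a conditional ball $\textbf{B}_\textbf{r}(\textbf{x}_0)$ on $1$ and some $\textbf{n}\in\textbf{N}$ with $\textbf{B}_\textbf{r}(\textbf{x}_0)\sqsubset\textbf{E}_\textbf{n}$.

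It then remains to convert this into a uniform bound. Fix $\textbf{T}\in\textbf{S}$ and $\textbf{x}\in\textbf{S}_\textbf{E}$. Since $\Vert(\textbf{x}_0+\textbf{r}\textbf{x})-\textbf{x}_0\Vert=\textbf{r}\Vert\textbf{x}\Vert=\textbf{r}$ we have $\textbf{x}_0+\textbf{r}\textbf{x}\in\textbf{B}_\textbf{r}(\textbf{x}_0)\sqsubset\textbf{E}_\textbf{n}$, and also $\textbf{x}_0\in\textbf{E}_\textbf{n}$; hence, by conditional linearity of $\textbf{T}$ and the conditional triangle inequality,
\[
\textbf{r}\,\Vert\textbf{T}(\textbf{x})\Vert=\Vert\textbf{T}(\textbf{r}\textbf{x})\Vert\leq\Vert\textbf{T}(\textbf{x}_0+\textbf{r}\textbf{x})\Vert+\Vert\textbf{T}(\textbf{x}_0)\Vert\leq\textbf{2}\textbf{n}.
\]
Dividing by $\textbf{r}\in\textbf{R}^{++}$ and taking the conditional supremum over $\textbf{x}\in\textbf{S}_\textbf{E}$ gives $\Vert\textbf{T}\Vert\leq(\textbf{2}\textbf{n}+\textbf{1})\textbf{r}^{-1}=:\textbf{s}$, where $\textbf{s}\in\textbf{R}^{++}$ is independent of $\textbf{T}\in\textbf{S}$ (the extra $+\textbf{1}$ merely guarantees $\textbf{s}>\textbf{0}$). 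This is the desired conclusion.

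The part I expect to be most delicate is not the analytic idea, which is the classical one, but the verification that $\{\textbf{E}_\textbf{n}\}_{\textbf{n}\in\textbf{N}}$ really is a conditional (hence conditionally countable) family of conditionally closed conditional subsets of $\textbf{E}$ whose conditional union is all of $\textbf{E}$ — in particular, making the conditional Archimedean step precise and checking that the conditional preimage and conditional intersection behave stably in the parameter $\textbf{n}$. Once the hypotheses of Theorem \ref{thm: baire} are secured, the remainder is the standard argument carried over verbatim.
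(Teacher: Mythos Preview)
Your proof is correct and follows essentially the same route as the paper: define the conditionally closed sets $\textbf{E}_\textbf{n}$, cover $\textbf{E}$ by their conditional union, apply the conditional Baire Category theorem (Theorem~\ref{thm: baire}) to obtain a conditional ball inside some $\textbf{E}_\textbf{n}$, and then translate to a uniform bound via conditional linearity. The only cosmetic differences are that the paper argues closedness of $\textbf{E}_\textbf{n}$ by a direct conditional-sequence argument rather than via preimages, and in the final step works over $\textbf{B}_\textbf{E}$ rather than $\textbf{S}_\textbf{E}$, arriving at the bound $\textbf{2}\textbf{n}\textbf{r}^{-1}$ without the extra $+\textbf{1}$.
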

\begin{proof}

For each $\textbf{n}\in\textbf{N}$ we define the following conditional set
\[
\textbf{E}_\textbf{n} := \left[ \textbf{x}\in \textbf{E} \:;\: \underset{\textbf{T}\in \textbf{S}}\nsup |\textbf{T}(\textbf{x})|\leq \textbf{n}\right].
\]

Since the function $n\mapsto E_n$ is stable, it can be defined a conditionally countable family $\{\textbf{E}_\textbf{n}\}_{\textbf{n}\in\textbf{N}}$. 

Then it is clear that $\textbf{E}=\sqcup_{\textbf{n}\in\textbf{N}} \textbf{E}_\textbf{n}$.

Let us show that $\textbf{E}_\textbf{n}$ is conditionally closed. For that, let us take a  conditional sequence $\{\textbf{x}_{\textbf{k}}\}_{\textbf{k}\in\textbf{N}}$ in $\textbf{E}_\textbf{n}$ which conditionally converges to $\textbf{x}$, and let us show that $\textbf{x}\in \textbf{E}_\textbf{n}$. Indeed, arguing by way of contradiction, assume that there exists $c\in\A$, $c\neq 0$, and $\textbf{T}\in \textbf{S}$ so that $|\textbf{T}(\textbf{x})||c> \textbf{n}|c$. Let us suppose $c=1$, by consistently arguing on $c$. Since $\{\textbf{T}(\textbf{x}_{\textbf{k}})\}$ conditionally converges to $|\textbf{T}(\textbf{x})|$, there exists some $\textbf{k}'\in\textbf{N}$ with $|\textbf{T}(\textbf{x}_{\textbf{k}'})|>\textbf{n}$, which is a contradiction.   

By Theorem \ref{thm: baire}, it follows that there exists a conditional ball $\textbf{B}_{\textbf{r}}(\textbf{z})$ on $1$ such that $\textbf{B}_{\textbf{r}}(\textbf{z})\sqsubset \textbf{E}_\textbf{n}$ for some $\textbf{n}\in\textbf{N}$.

Let $\textbf{x}\in \textbf{B}_\textbf{E}$ and $\textbf{T}\in \textbf{S}$. It follows
\[
|\textbf{T}(\textbf{x})| =\frac{\textbf{1}}{\textbf{r}}|\textbf{T}(\textbf{z}-\textbf{r} \textbf{x})- \textbf{T}(\textbf{z})|\leq \frac{\textbf{1}}{\textbf{r}}(|\textbf{T}(\textbf{z}-\textbf{r} \textbf{x})|+|\textbf{T}(\textbf{z})|)\leq \frac{\textbf{1}}{\textbf{r}}\textbf{2}\textbf{n}.
\]
\end{proof}

\subsection{Some results on conditional weak topologies}

We need to introduce the following notion:

\begin{defn}
Let $\textbf{E}$ be a conditional vector space. A conditional function $\textbf{p}:\textbf{E}\rightarrow\textbf{R}^+$ is a conditional seminorm if:
\begin{enumerate}
	\item $\textbf{p}(\textbf{r}\textbf{x})=|\textbf{r}|\textbf{p}(\textbf{x})$ for all $\textbf{r}\in\textbf{R}$ and $\textbf{x}\in\textbf{E}$;
	\item $\textbf{p}(\textbf{x}+\textbf{y})\leq\textbf{p}(\textbf{x})+\textbf{p}(\textbf{y})$ for all $\textbf{x},\textbf{y}\in\textbf{E}$.
\end{enumerate}
\end{defn}

\begin{defn}
Let $\mathcal{P}$ be a conditional family of conditional seminorms on a conditional vector space $\textbf{E}$. Given a conditionally finite subset $\textbf{Q}\sqsubset\mathcal{P}$ on $1$, and $\textbf{r}\in\textbf{R}^{++}$, we define
\[
\textbf{U}_{\textbf{Q},\textbf{r}}:=\left[\textbf{x}\in\textbf{E}\:;\: \underset{\textbf{p}\in \textbf{Q}}\nsup\textbf{p}(\textbf{x})\leq \textbf{r}\right].
\] 
Then, it follows by inspection that 
\[
\mathcal{U}:=\left[\textbf{x}+\textbf{U}_{\textbf{Q},\textbf{r}}\;;\:\textbf{x}\in\textbf{E},\:\textbf{r}\in\textbf{R}^{++},\: \textbf{Q}\sqsubset \mathcal{P}\textnormal{ on }1\textnormal{, conditionally finite} \right]
\]
is a conditional topological base. The conditional topology generated by $\mathcal{U}$ will be known as the conditional topology induced by $\mathcal{P}$, and $\textbf{E}$ endowed with this topology will be denoted by $\textbf{E}[\mathcal{P}]$.
\end{defn}

It can be checked that $\textbf{E}[\mathcal{P}]$ is a conditional locally convex space. 

\begin{defn}
Let $\textbf{E}$ be a conditional vector space and $\textbf{C}\sqsubset\textbf{E}$ on $1$. The conditional gauge functional is defined by
\[
\Vert\textbf{x}\Vert_{\textbf{C}}:=\ninf\left[ \textbf{r}\in\textbf{R}^{++}\:;\: \textbf{x}\in\textbf{r}\textbf{C}\right], \textnormal{ for any }\textbf{x}\in\textbf{E}.
\] 
\end{defn}

Then, we have the following result:
\begin{prop}
\label{prop: gauge}
Let $\textbf{E}$ be a conditional vector space and $\textbf{C}\sqsubset\textbf{E}$ on $1$, then:
\begin{enumerate}
	\item if $\textbf{C}$ is conditionally absorbing, then $\Vert\textbf{x}\Vert_{\textbf{C}}< +\infty$ for all $\textbf{x}\in\textbf{E}$;
	\item if $\textbf{C}$ is conditionally balanced, then $\Vert\textbf{r}\textbf{x}\Vert_{\textbf{C}}=|\textbf{r}|\Vert\textbf{x}\Vert_{\textbf{C}}$ for all $(\textbf{x},\textbf{r})\in\textbf{E}\Join\textbf{R}$;
	\item if $\textbf{C}$ is conditionally convex, then $\Vert\textbf{x}+\textbf{y}\Vert_{\textbf{C}}\leq \Vert\textbf{x}\Vert_{\textbf{C}}+\Vert\textbf{y}\Vert_{\textbf{C}}$ for all  $(\textbf{x},\textbf{y})\in\textbf{E}\Join\textbf{E}$.
\end{enumerate}
In particular, $\Vert\cdot\Vert_{\textbf{C}}$ is a conditional seminorm whenever $\textbf{C}$ is conditionally convex, conditionally absorbing, and conditionally balanced.
\end{prop}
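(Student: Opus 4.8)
The plan is to mimic the classical proof of the gauge (Minkowski) functional properties, transferring each step to the conditional setting, with the one genuine extra ingredient being that suprema/infima must be taken as conditional infima in $\textbf{R}$ and that every manipulation of inequalities must be stable (i.e. it respects concatenation along partitions). I would first record the basic monotonicity fact: if $\textbf{0}<\textbf{r}\leq\textbf{s}$ and $\textbf{C}$ is conditionally convex with $\textbf{0}\in\textbf{C}$ (which follows once we know $\textbf{C}$ is absorbing, since then $\textbf{0}=\textbf{0}\cdot\textbf{x}\in\textbf{r}\textbf{C}$ for suitable $\textbf{r}$, or we simply treat $\textbf{0}$ separately), then $\textbf{r}\textbf{C}\sqsubset\textbf{s}\textbf{C}$; hence the conditional set $[\textbf{r}\in\textbf{R}^{++}\:;\:\textbf{x}\in\textbf{r}\textbf{C}]$ over which the conditional infimum is taken is a conditional ``ray'', which makes $\Vert\cdot\Vert_{\textbf{C}}$ well behaved. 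For (1), conditional absorbency gives, for each $\textbf{x}\in\textbf{E}$, some $\textbf{r}\in\textbf{R}^{++}$ with $\textbf{x}\in\textbf{r}\textbf{C}$, so the conditional set in the definition of $\Vert\textbf{x}\Vert_{\textbf{C}}$ is nonempty on $1$ and the conditional infimum is bounded above by that $\textbf{r}$, whence $\Vert\textbf{x}\Vert_{\textbf{C}}<+\infty$ (here $+\infty$ is understood via the conditional order; formally one argues on the partition produced by $\textbf{r}$).

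For (2), fix $(\textbf{x},\textbf{r})\in\textbf{E}\Join\textbf{R}$. On the conditional event where $\textbf{r}=\textbf{0}$ both sides are $\textbf{0}$ (using $\textbf{0}\in\textbf{C}$, which conditional balancedness of a nonempty $\textbf{C}$ delivers by taking the scalar $\textbf{0}$ with $|\textbf{0}|\leq\textbf{1}$), so by the stability/concatenation principle it suffices to argue on the complementary event where $|\textbf{r}|>\textbf{0}$, i.e.\ where $\textbf{r}^{-1}$ in the sense defined in the preliminaries is a genuine inverse. There, conditional balancedness gives $\textbf{s}\textbf{C}=(\textbf{s}/|\textbf{r}|)\,|\textbf{r}|\,\textbf{C}$ and $|\textbf{r}|\textbf{C}=\textbf{C}$ scaled appropriately — concretely, $\textbf{r}\textbf{x}\in\textbf{s}\textbf{C}\iff\textbf{x}\in(\textbf{s}/|\textbf{r}|)\textbf{C}$ — so the substitution $\textbf{s}\mapsto|\textbf{r}|\textbf{s}$ is a conditional order isomorphism of the two index sets, and the conditional infima transform by the factor $|\textbf{r}|$. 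Taking conditional infima on both sides yields $\Vert\textbf{r}\textbf{x}\Vert_{\textbf{C}}=|\textbf{r}|\,\Vert\textbf{x}\Vert_{\textbf{C}}$.

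For (3), I would use the standard $\varepsilon$-argument, but phrased conditionally: given $\textbf{x},\textbf{y}\in\textbf{E}$ and any $\boldsymbol{\varepsilon}\in\textbf{R}^{++}$, pick (by definition of conditional infimum) $\textbf{s}\in\textbf{R}^{++}$ with $\Vert\textbf{x}\Vert_{\textbf{C}}\leq\textbf{s}\leq\Vert\textbf{x}\Vert_{\textbf{C}}+\boldsymbol{\varepsilon}$ and $\textbf{x}\in\textbf{s}\textbf{C}$, and similarly $\textbf{t}\in\textbf{R}^{++}$ with $\textbf{y}\in\textbf{t}\textbf{C}$ and $\textbf{t}\leq\Vert\textbf{y}\Vert_{\textbf{C}}+\boldsymbol{\varepsilon}$; then write
\[
\textbf{x}+\textbf{y}=(\textbf{s}+\textbf{t})\left(\frac{\textbf{s}}{\textbf{s}+\textbf{t}}\cdot\frac{\textbf{x}}{\textbf{s}}+\frac{\textbf{t}}{\textbf{s}+\textbf{t}}\cdot\frac{\textbf{y}}{\textbf{t}}\right),
\]
and since $\textbf{x}/\textbf{s},\textbf{y}/\textbf{t}\in\textbf{C}$ and $\textbf{s}/(\textbf{s}+\textbf{t})+\textbf{t}/(\textbf{s}+\textbf{t})=\textbf{1}$ with both coefficients conditionally in $[\textbf{0},\textbf{1}]$, conditional convexity of $\textbf{C}$ gives the bracketed element in $\textbf{C}$, hence $\textbf{x}+\textbf{y}\in(\textbf{s}+\textbf{t})\textbf{C}$ and $\Vert\textbf{x}+\textbf{y}\Vert_{\textbf{C}}\leq\textbf{s}+\textbf{t}\leq\Vert\textbf{x}\Vert_{\textbf{C}}+\Vert\textbf{y}\Vert_{\textbf{C}}+\textbf{2}\boldsymbol{\varepsilon}$. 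Since $\boldsymbol{\varepsilon}\in\textbf{R}^{++}$ was arbitrary and $\textbf{R}$ is conditionally Dedekind complete, letting $\boldsymbol{\varepsilon}$ decrease yields subadditivity. The final ``in particular'' is then immediate by combining (2) and (3) (positive homogeneity from (2) with $\textbf{r}\geq\textbf{0}$), together with (1) to ensure finiteness, so $\Vert\cdot\Vert_{\textbf{C}}$ satisfies the two axioms of a conditional seminorm.

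The main obstacle I anticipate is not any deep idea but the bookkeeping of conditional scalars that may vanish on part of the Boolean algebra: every division by a scalar, and every claim that an index set $[\textbf{r}\in\textbf{R}^{++}\:;\:\textbf{x}\in\textbf{r}\textbf{C}]$ is ``on $1$'', must be justified either by the hypothesis (absorbency for nonemptiness, balancedness for $\textbf{0}\in\textbf{C}$ and for the scaling isomorphism) or by splitting along the canonical partition of $\A$ attached to the scalar and invoking the consistency and stability axioms of the conditional set $\textbf{E}$; I would isolate this in a short preliminary remark so that the three parts read exactly as in the classical case.
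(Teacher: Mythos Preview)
Your proposal is correct and follows essentially the same approach as the paper: part (1) is identical, part (2) splits along the support of $\textbf{r}$ and uses the change of variable $\textbf{s}\mapsto\textbf{s}/|\textbf{r}|$ exactly as the paper does, and part (3) uses the same convex-combination computation $\textbf{r}\textbf{C}+\textbf{s}\textbf{C}\sqsubset(\textbf{r}+\textbf{s})\textbf{C}$. The only cosmetic difference is that for (3) the paper takes conditional infima directly over all admissible $\textbf{r}$ and then $\textbf{s}$ rather than via an $\boldsymbol{\varepsilon}$-argument, which is marginally cleaner since it sidesteps the need to treat separately the partition on which the gauge is $+\infty$.
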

\begin{proof}
\begin{enumerate}
	\item Let be given $\textbf{x}\in\textbf{E}$. If $\textbf{r}\in\textbf{R}^{++}$ with $\textbf{x}\in\textbf{r}\textbf{C}$, then $\textbf{p}_\textbf{C}(\textbf{x})\leq\textbf{r}$.
	\item Suppose $\textbf{x}\in\textbf{E}$ and $\textbf{r}\in\textbf{R}$. Let us put $a:=\supp(\textbf{r})$. 
		First, we have that $\Vert \textbf{r}\textbf{x}\Vert_\textbf{C}|a^c=\Vert\textbf{0}\Vert_\textbf{C}|a^c=\textbf{0}|a^c=\textbf{r}\Vert\textbf{x}\Vert_\textbf{C}|a^c$.
		
		Let us assume $a=1$ by consistently arguing on $a$. Thereby, $|\textbf{r}|>\textbf{0}$. Besides, for fixed $\textbf{s}\in\textbf{R}^{++}$, being $\textbf{C}$ conditionally balanced, we have that $\textbf{r}\textbf{x}\in\textbf{s}\textbf{C}$ if, and only if, $\textbf{x}\in\frac{\textbf{s}}{|\textbf{r}|}\textbf{C}$. Then, we have
\[
\Vert\textbf{r}\textbf{x}\Vert_\textbf{C}=\ninf\{\textbf{s}>\textbf{0}\:;\:\textbf{r}\textbf{x}\in\textbf{s}\textbf{C}\}=		
\ninf\{\textbf{s}>\textbf{0}\:;\:\textbf{x}\in\frac{\textbf{s}}{|\textbf{r}|}\textbf{C}\}=
\]
\[
|\textbf{r}|\ninf\{\frac{\textbf{s}}{|\textbf{r}|}\:;\:\textbf{s}>\textbf{0},\:\textbf{x}\in\frac{\textbf{s}}{|\textbf{r}|}\textbf{C}\}=
|\textbf{r}|\Vert\textbf{x}\Vert_\textbf{C}.
\]
\item For $\textbf{x},\textbf{y}\in\textbf{E}$, let $\textbf{r},\textbf{s}\in\textbf{R}^{++}$, with $\textbf{x}\in\textbf{r}\textbf{C}$ and $\textbf{y}\in\textbf{s}\textbf{C}$. Then, since $\textbf{C}$ is conditionally convex, we have
\[
\textbf{x}+\textbf{y}\in \textbf{r}\textbf{C} + \textbf{s}\textbf{C}=(\textbf{r}+\textbf{s})\left(\frac{\textbf{r}}{\textbf{r}+\textbf{s}}\textbf{C} +  \frac{\textbf{s}}{\textbf{r}+\textbf{s}}\textbf{C}\right)\sqsubset (\textbf{r}+\textbf{s})\textbf{C}.
\]
This implies that $\Vert\textbf{x}+\textbf{y}\Vert_\textbf{C}\leq\textbf{r}+\textbf{s}$. 

Now, by taking conditional infimums; first over $\textbf{r}$, and later over $\textbf{s}$, we obtain the result.

\end{enumerate}

\end{proof}

The conditional locally convex spaces can be characterized in a similar way as occurs in the classical case. More precisely speaking, a conditional topological space is conditionally locally convex if, and only if, there exists a conditional family of conditional seminorms inducing its conditional topology. The proof does not has any surprising element and can be easily written following the non conditional case, and employing the conditional gauge function. 

\begin{rem}
Some comments can be made in this regard. Before the conditional set theory was introduced, Filipovic et al. \cite{key-10} tried to state a similar characterization in the related context of topological $L^0$-modules (see \cite[Theorem 2.4]{key-10}). However, as was pointed out Guo et al.\cite{key-34}, there is some lack of precision in ascertaining which stability properties are required for this statement to be true; moreover, neither the algebraic structure nor the topological structure of a general $L^0$-module need to be stable with respect to the underlying measure algebra. In fact, Zapata \cite{key-17}, and independently Wu and Guo \cite{key-18}, provided a counterexample to this statement, and proved that a mild type of stability property is required on the elements of the neighborhood base of $0$ considered in Theorem 2.4 of \cite{key-10}.
\end{rem}

An example of conditional topology induced by a conditional family of conditional seminorms are the conditional weak topologies. For a  conditional locally convex space $\textbf{E}[\mathcal{T}]$, let $\textbf{E}[\mathcal{T}]^*$ denote $-$or simply $\textbf{E}^*$$-$, the conditional vector space of conditionally linear and conditionally continuous functions $\textbf{f}:\textbf{E}\rightarrow\textbf{R}$. We define the conditional weak topology $\sigma(\textbf{E},\textbf{E}^*)$ on $\textbf{E}$ as the conditional locally convex topology induced by the conditional family of conditional seminorms $\left[\textbf{p}_{\textbf{x}^*}\:;\:\textbf{x}^*\in \textbf{E}^* \right]$ defined by $\textbf{p}_{\textbf{x}^*}=|\textbf{x}^*(\textbf{x})|$ for $\textbf{x}\in\textbf{E}$. Analogously, the conditional weak-$*$ topology $\sigma(\textbf{E}^*,\textbf{E})$ on $\textbf{E}^*$ is defined.

\begin{rem}
\label{rem: weakT}
Conditional dual pairs are introduced and discussed in \cite{key-7} (see \cite[Definition 5.6]{key-7}), also a more detailed discussion can be found in \cite[Section 4]{key-35}.  Further, in \cite{key-7} it was shown that the pairing $\langle \textbf{E}, \textbf{E}^*\rangle$  together with conditionally bilinear form $\langle , \rangle:\textbf{E}\Join\textbf{E}^*\rightarrow\textbf{R}$ defined by $\langle\textbf{x},\textbf{x}^*\rangle:=\textbf{x}^*(\textbf{x})$ is a dual pair. Consequently we have $\textbf{E}^*\left[\sigma(\textbf{E}^*,\textbf{E})\right]^*=\textbf{E}$.  
\end{rem}

Now let us turn to see some results related to weak topologies.

\begin{prop}
\label{prop: weakClosure}
Let $\textbf{E}[\mathcal{T}]$ be a conditional locally convex space, and let $\textbf{A}\sqsubset\textbf{E}$ be on $1$  conditionally convex. Then $\overline{\textbf{A}}^{\mathcal{T}}=\overline{\textbf{A}}^{\sigma(\textbf{E},\textbf{E}^*)}$.
\end{prop}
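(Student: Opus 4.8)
The plan is to mimic the classical proof that the closure of a convex set coincides with its weak closure, adapted to the conditional setting. Since the conditional weak topology $\sigma(\textbf{E},\textbf{E}^*)$ is coarser than $\mathcal{T}$ (every conditional seminorm $\textbf{p}_{\textbf{x}^*}$ is conditionally $\mathcal{T}$-continuous, so every conditionally $\sigma(\textbf{E},\textbf{E}^*)$-open set is conditionally $\mathcal{T}$-open), the inclusion $\overline{\textbf{A}}^{\mathcal{T}}\sqsubset\overline{\textbf{A}}^{\sigma(\textbf{E},\textbf{E}^*)}$ is immediate: the conditional closure with respect to the coarser topology is conditionally larger. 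Moreover, since $\textbf{A}$ is conditionally convex, so is $\overline{\textbf{A}}^{\mathcal{T}}$ (the conditional closure of a conditionally convex set is conditionally convex, which follows from conditional continuity of the conditional vector space operations and a routine argument). So it suffices to prove the reverse inclusion, and for this it is enough to show that $\overline{\textbf{A}}^{\mathcal{T}}$ is conditionally $\sigma(\textbf{E},\textbf{E}^*)$-closed.

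First I would reduce to showing that the conditional complement $(\overline{\textbf{A}}^{\mathcal{T}})^{\sqsubset}$ is conditionally $\sigma(\textbf{E},\textbf{E}^*)$-open. Fix a conditional element $\textbf{x}_0\in(\overline{\textbf{A}}^{\mathcal{T}})^{\sqsubset}$ on $1$; I want a conditional $\sigma(\textbf{E},\textbf{E}^*)$-neighborhood of $\textbf{x}_0$ conditionally contained in the complement. Here is where Theorem \ref{thm: HB2}(2) enters: the conditionally compact conditional set $\textbf{C}:=\textbf{s}(\{\textbf{x}_0\})$ (a conditional singleton, hence conditionally compact by Proposition \ref{prop: Heine-Borel} applied to the conditionally finitely generated conditional span, or directly) is conditionally disjoint from the conditionally closed conditionally convex conditional set $\textbf{D}:=\overline{\textbf{A}}^{\mathcal{T}}$, and $\textbf{C}\sqcap\textbf{D}=\textbf{E}|0$. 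By the conditional Hahn-Banach separation theorem there exist $\textbf{x}^*\in\textbf{E}^*$ and $\textbf{r}\in\textbf{R}^{++}$ with $\textbf{x}^*(\textbf{x}_0)+\textbf{r}<\textbf{x}^*(\textbf{y})$ for every $\textbf{y}\in\overline{\textbf{A}}^{\mathcal{T}}$. Then the conditional set $\textbf{U}:=[\textbf{x}\in\textbf{E}\:;\:|\textbf{x}^*(\textbf{x})-\textbf{x}^*(\textbf{x}_0)|<\textbf{r}]$ is a conditional $\sigma(\textbf{E},\textbf{E}^*)$-neighborhood of $\textbf{x}_0$ on $1$ (it is, up to translation, of the form $\textbf{x}_0+\textbf{U}_{\textbf{Q},\textbf{r}}$ with $\textbf{Q}:=\textbf{s}(\{\textbf{p}_{\textbf{x}^*}\})$), and by construction $\textbf{U}\sqcap\overline{\textbf{A}}^{\mathcal{T}}=\textbf{E}|0$, i.e. $\textbf{U}\sqsubset(\overline{\textbf{A}}^{\mathcal{T}})^{\sqsubset}$. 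Since $\textbf{x}_0$ was arbitrary and the complement is a conditional union of such conditional neighborhoods, it is conditionally $\sigma(\textbf{E},\textbf{E}^*)$-open, hence $\overline{\textbf{A}}^{\mathcal{T}}$ is conditionally $\sigma(\textbf{E},\textbf{E}^*)$-closed, which forces $\overline{\textbf{A}}^{\sigma(\textbf{E},\textbf{E}^*)}\sqsubset\overline{\textbf{A}}^{\mathcal{T}}$.

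The main obstacle I anticipate is not conceptual but bookkeeping: one must be careful that every step respects stability and the conditional set-builder conventions. In particular, the separation argument is genuinely applied "locally" — $\textbf{x}_0$ may need to be treated on different partition elements — but because $\textbf{x}_0$ is taken to be a conditional element on $1$ and $\textbf{E}|0$-disjointness is preserved under concatenation, a single application of Theorem \ref{thm: HB2}(2) on $1$ suffices; I would make explicit that $\textbf{x}_0\notin\overline{\textbf{A}}^{\mathcal{T}}$ on $1$ means precisely $\textbf{s}(\{\textbf{x}_0\})\sqcap\overline{\textbf{A}}^{\mathcal{T}}=\textbf{E}|0$. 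I would also state clearly at the outset the two auxiliary facts used without proof as routine: (i) a conditional topology coarser than $\mathcal{T}$ yields a conditionally larger conditional closure, and (ii) the conditional $\mathcal{T}$-closure of a conditionally convex conditional set is conditionally convex. With those in hand the argument is a direct transcription of the classical one.
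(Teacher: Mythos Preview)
Your proposal is correct and follows essentially the same route as the paper: both use the conditional Hahn-Banach separation theorem (Theorem \ref{thm: HB2}(2)) to separate a conditional singleton from the conditionally $\mathcal{T}$-closed convex set $\overline{\textbf{A}}^{\mathcal{T}}$. The only cosmetic difference is that the paper argues by contradiction (take $\textbf{x}$ in the conditional weak closure, use a supremum-of-conditions argument to reduce to the case $[\textbf{x}]\sqcap\overline{\textbf{A}}^{\mathcal{T}}=\textbf{E}|0$, then separate to contradict $\textbf{x}\in\overline{\textbf{A}}^{\sigma(\textbf{E},\textbf{E}^*)}$), whereas you phrase it as showing directly that $(\overline{\textbf{A}}^{\mathcal{T}})^{\sqsubset}$ is conditionally weakly open; the paper's explicit handling of the condition $b:=\vee\{a:x|a\in\overline{\textbf{A}}^{\mathcal{T}}\}$ is precisely the ``bookkeeping'' localization you flag as the main obstacle.
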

\begin{proof}

Clearly, $\overline{\textbf{A}}^{\mathcal{T}}\sqsubset\overline{\textbf{A}}^{\sigma(\textbf{E},\textbf{E}^*)}$.

For the reverse conditional inclusion, let us take $\textbf{x}\in\overline{\textbf{A}}^{\sigma(\textbf{E},\textbf{E}^*)}$.

Let us consider $b:=\vee\left\{ a\in\A \:;\: x|a\in\overline{\textbf{A}}^{\mathcal{T}}\right\}$.

Since $b$ is attained, if $b=1$ we get the result. If not, we can consistently argue on $b^c$, and suppose w.l.g. $b=0$.

If so, we have that $[\textbf{x}]\sqcap\overline{\textbf{A}}^{\mathcal{T}}=\textbf{E}|0$. Besides, $[\textbf{x}]$ is conditionally compact, and $\overline{\textbf{A}}^{\mathcal{T}}$ is conditionally convex and conditionally closed with respect to $\mathcal{T}$.

Then, Theorem \ref{thm: HB2} yields $\textbf{x}^*\in \textbf{E}^*$ and $\textbf{r}\in\textbf{R}^{++}$ such that 
\begin{equation}
\label{eq: mazur}
\begin{array}{cc}
\textbf{x}^*(\textbf{x})+\textbf{r}\leq\textbf{x}^*(\textbf{y}), & \textnormal{ for all }\textbf{y}\in\overline{\textbf{A}}^{\mathcal{T}}.
\end{array}
\end{equation}
Let us define $\textbf{B}:=\left[ y\in\textbf{E} \:;\: \textbf{x}^*(\textbf{y})\geq\textbf{x}^*(\textbf{x})+\textbf{r} \right]$, which is a conditionally $\sigma(\textbf{E},\textbf{E}^*)$-closed subset. 

But (\ref{eq: mazur}) means that $\textbf{A}\sqsubset \textbf{B}$ and $\textbf{x}\in \textbf{B}^{\sqsubset}$. This is a contradiction as $\textbf{x}$ is in the conditional weak closure of $\textbf{A}$.  
\end{proof}

\begin{rem}
The above result is a conditional version of the classical Mazur's theorem. In literature, we can find some related results. For instance, it can be found a version of this theorem for $L^0$-normed modules, but considering mild stability properties, in \cite[Corollary 2.1]{key-15}; a version of Mazur's theorem for $L^\infty$-modules is proved in \cite[Theorem 10.1]{key-14}; and another version for $L^0$-modules, but with the topology of stochastic convergence with respect to a family of $L^0$-seminorms, was provided in \cite[Corollary 3.4]{key-37}. 
\end{rem}

In the following result we give the \textit{natural conditional embedding} of a conditionally normed space $\textbf{E}$ into the conditional second dual $\textbf{E}^{**}:=(\textbf{E}^{*})^{*}$.

\begin{thm}
\label{thm: imbedding}
Let $\textbf{E}$ be a conditionally normed space. For each $\textbf{x}\in\textbf{E}$, let us define the conditional function $\textbf{T}_\textbf{x}:\textbf{E}^*\rightarrow\textbf{R}$ with $\textbf{T}_\textbf{x}(\textbf{x}^*):=\textbf{x}^*(\textbf{x})$. Then $\textbf{T}_\textbf{x}\in\textbf{E}^{**}$ for each $\textbf{x}\in\textbf{E}$, and the conditional function $\textbf{j}:\textbf{E}\rightarrow\textbf{E}^{**}$ given by $\textbf{j}(\textbf{x}):=\textbf{T}_\textbf{x}$ is a conditional isometry, i.e. $\Vert \textbf{x}\Vert=\Vert\textbf{j}(\textbf{x})\Vert$ for all $\textbf{x}\in\textbf{E}$.  
\end{thm}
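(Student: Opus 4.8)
The plan is to mimic the classical proof of the canonical embedding into the second dual, carrying out each step in the conditional language and invoking the conditional Hahn--Banach theorem (Theorem~\ref{thm: HB}) at the one non-elementary point. First I would check that, for fixed $\textbf{x}\in\textbf{E}$, the map $\textbf{T}_\textbf{x}$ is a well-defined conditional element of $\textbf{E}^{**}$. Conditional linearity of $\textbf{x}^*\mapsto\textbf{x}^*(\textbf{x})$ is immediate from the pointwise conditional vector space operations on $\textbf{E}^*$, and stability follows because evaluation at a fixed conditional element commutes with concatenations along partitions of $1$; this is a routine verification using the stability of the conditional bilinear form $\langle\cdot,\cdot\rangle$ noted in Remark~\ref{rem: weakT}. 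Conditional continuity, and in fact conditional boundedness with $\Vert\textbf{T}_\textbf{x}\Vert\leq\Vert\textbf{x}\Vert$, comes from the conditional inequality $|\textbf{T}_\textbf{x}(\textbf{x}^*)|=|\textbf{x}^*(\textbf{x})|\leq\Vert\textbf{x}^*\Vert\,\Vert\textbf{x}\Vert$, which holds for every $\textbf{x}^*\in\textbf{E}^*$ by the definition of the conditional operator norm. Hence $\textbf{T}_\textbf{x}\in\textbf{L}(\textbf{E}^*,\textbf{R})=\textbf{E}^{**}$ with $\Vert\textbf{j}(\textbf{x})\Vert=\Vert\textbf{T}_\textbf{x}\Vert\leq\Vert\textbf{x}\Vert$.

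Next I would verify that $\textbf{j}$ itself is a conditional function, i.e.\ that $x\mapsto T_x$ is a stable map $E\to E^{**}$; again this reduces to the observation that if $x=\sum x_i|a_i$ then $T_x=\sum T_{x_i}|a_i$ as conditional functionals, which one checks by evaluating both sides at an arbitrary $\textbf{x}^*$ and using stability of the pairing together with the consistency axioms. Conditional linearity of $\textbf{j}$ is similarly formal. The only remaining point is the reverse inequality $\Vert\textbf{x}\Vert\leq\Vert\textbf{T}_\textbf{x}\Vert$, and here is where the conditional Hahn--Banach theorem enters. I would reduce to the conditional element $\textbf{x}$ having full support: on $\supp(\textbf{x})^c$ both sides are $\textbf{0}$, so by consistency we may argue on $\supp(\textbf{x})$ and assume $\Vert\textbf{x}\Vert>\textbf{0}$ on $1$.

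For such an $\textbf{x}$, apply Theorem~\ref{thm: HB} with $\textbf{p}=\Vert\cdot\Vert$ (a conditionally convex function), $\textbf{F}=\spa_\textbf{R}[\textbf{x}]$ the conditional line through $\textbf{x}$, and the conditionally linear functional on $\textbf{F}$ sending $\textbf{r}\textbf{x}\mapsto\textbf{r}\Vert\textbf{x}\Vert$; this functional is dominated by $\Vert\cdot\Vert$ on $\textbf{F}$ by conditional homogeneity of the norm together with the sign analysis of conditional reals. Theorem~\ref{thm: HB} then produces $\textbf{x}^*\in\textbf{E}^*$ with $\textbf{x}^*(\textbf{x})=\Vert\textbf{x}\Vert$ and $\textbf{x}^*(\textbf{y})\leq\Vert\textbf{y}\Vert$ for all $\textbf{y}$, whence $\Vert\textbf{x}^*\Vert\leq\textbf{1}$ (one gets $|\textbf{x}^*(\textbf{y})|\leq\Vert\textbf{y}\Vert$ by also applying the bound to $-\textbf{y}$). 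Therefore $\Vert\textbf{T}_\textbf{x}\Vert=\nsup[\,|\textbf{T}_\textbf{x}(\textbf{y}^*)|\:;\:\textbf{y}^*\in\textbf{B}_{\textbf{E}^*}\,]\geq|\textbf{T}_\textbf{x}(\textbf{x}^*)|=|\textbf{x}^*(\textbf{x})|=\Vert\textbf{x}\Vert$, which combined with the first inequality gives $\Vert\textbf{j}(\textbf{x})\Vert=\Vert\textbf{x}\Vert$.

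The main obstacle I anticipate is not any single inequality but the bookkeeping of stability: one must be careful that the supremum defining $\Vert\textbf{T}_\textbf{x}\Vert$ is taken over the conditional unit ball $\textbf{B}_{\textbf{E}^*}$ (so that the extending functional $\textbf{x}^*$, which only satisfies $\Vert\textbf{x}^*\Vert\leq\textbf{1}$, is admissible), and that the reduction to full support via arguing on $\supp(\textbf{x})$ is compatible with the way $\textbf{j}$, $\textbf{T}_\textbf{x}$ and the norms localize. These are exactly the places where the conditional framework departs from a verbatim copy of the classical argument, but none of them is deep; they follow the pattern already used repeatedly in the excerpt (e.g.\ in the proofs of Theorems~\ref{thm: baire} and~\ref{thm: uniformBoundeness} and Proposition~\ref{prop: weakClosure}).
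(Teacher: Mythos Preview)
Your proposal is correct and follows essentially the same route as the paper: both obtain the easy inequality $\Vert\textbf{T}_\textbf{x}\Vert\leq\Vert\textbf{x}\Vert$ from the definition of the operator norm, and for the reverse inequality both apply the conditional Hahn--Banach Theorem~\ref{thm: HB} to extend the functional $\textbf{r}\textbf{x}\mapsto\textbf{r}\Vert\textbf{x}\Vert$ from $\spa_\textbf{R}[\textbf{x}]$ to a norming functional $\textbf{x}^*\in\textbf{B}_{\textbf{E}^*}$ with $\textbf{x}^*(\textbf{x})=\Vert\textbf{x}\Vert$. Your write-up is somewhat more explicit about stability bookkeeping and the reduction to $\supp(\textbf{x})=1$, whereas the paper simply asserts that $\textbf{x}_0^*$ is well defined (which it is, since on the part where $\textbf{x}$ vanishes both sides of the defining relation vanish as well); neither difference is substantive.
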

\begin{proof}
It suffices to show that for all $\textbf{x}\in\textbf{E}$ it holds that $\Vert\textbf{x}\Vert=\nsup\left[|\textbf{x}^{*}(\textbf{x})|\:;\:\textbf{x}^{*}\in\textbf{B}_{\textbf{E}^{*}}\right]$.

First, let us show that there exists $\textbf{x}^{*}\in\textbf{B}_{\textbf{E}^*}$ such that $\textbf{x}^{*}(\textbf{x})=\Vert\textbf{x}\Vert$.
 Indeed, we have that the conditional function $\textbf{x}^*_0:\spa_\textbf{R} [\textbf{x}]\rightarrow\textbf{R}$ with $\textbf{x}^*_0(\textbf{r}\textbf{x}):=\textbf{r}\Vert\textbf{x}\Vert$ is well defined and $\textbf{x}^*_0\in(\spa_\textbf{R} [\textbf{x}])^*$. 

Second, notice that $\textbf{x}^*_0(\textbf{y})\leq \Vert\textbf{y}\Vert$ for all $\textbf{y}\in\spa_\textbf{R} [\textbf{x}]$. By Theorem \ref{thm: HB}, there exists $\textbf{x}^*\in\textbf{E}^*$ which extends $\textbf{x}^*_0$ and such that $|\textbf{x}^*(\textbf{y})|\leq \Vert\textbf{y}\Vert$ for all $\textbf{y}\in\textbf{E}$. We conclude that $\Vert\textbf{x}^*\Vert\leq \textbf{1}$. Moreover, notice that, in fact, $\Vert\textbf{x}^*\Vert=\textbf{1}$.

Finally, let $\textbf{r}:=\nsup\left[|\textbf{z}^{*}(\textbf{x})|\:;\:\textbf{z}^{*}\in\textbf{B}_{\textbf{E}^{*}}\right]$. Then it is clear that $\textbf{r}\leq \Vert\textbf{x}\Vert=\textbf{x}^*(\textbf{x})\leq |\textbf{x}^*(\textbf{x})|\leq\textbf{r}$, and we obtain what was asserted.   
\end{proof}


\begin{thm}
\label{thm: Goldstine}
[Conditional version of Goldstine's theorem]
Let $\textbf{j}:\textbf{E}\rightarrow\textbf{E}^{**}$ be the natural conditional embedding. Then $\textbf{j}(\textbf{B}_\textbf{E})$ is conditionally $\sigma(\textbf{E}^{**},\textbf{E}^*)$-dense in $\textbf{B}_{\textbf{E}^{**}}$.  
\end{thm}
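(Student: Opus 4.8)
The plan is to mimic the classical proof of Goldstine's theorem, which combines the Hahn--Banach separation theorem (available here as Theorem~\ref{thm: HB2}) with a bipolar-type argument, while being careful to argue locally on the Boolean algebra $\A$ whenever a supremum needs to be attained. Write $\mathcal{T}^{**}$ for the conditional weak-$*$ topology $\sigma(\textbf{E}^{**},\textbf{E}^*)$ on $\textbf{E}^{**}$. Since $\textbf{j}$ is a conditional isometry by Theorem~\ref{thm: imbedding}, $\textbf{j}(\textbf{B}_\textbf{E})\sqsubset\textbf{B}_{\textbf{E}^{**}}$, and $\textbf{B}_{\textbf{E}^{**}}$ is conditionally $\mathcal{T}^{**}$-closed (it is a conditional intersection of conditionally weak-$*$ closed conditional half-spaces, one for each $\textbf{x}^*\in\textbf{B}_{\textbf{E}^*}$). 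Hence $\overline{\textbf{j}(\textbf{B}_\textbf{E})}^{\mathcal{T}^{**}}\sqsubset\textbf{B}_{\textbf{E}^{**}}$, and the whole content of the theorem is the reverse conditional inclusion.

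For the reverse inclusion I would argue by contradiction with a localization step, exactly as in the proof of Proposition~\ref{prop: weakClosure}. Suppose $\textbf{x}^{**}\in\textbf{B}_{\textbf{E}^{**}}$, and consider $b:=\vee\{a\in\A\:;\: x^{**}|a\in\overline{\textbf{j}(\textbf{B}_\textbf{E})}^{\mathcal{T}^{**}}\}$. One first checks, using stability of the conditional closure, that $b$ is attained; if $b=1$ we are done, so assume $b=0$ after conditioning on $b^c$. Then $[\textbf{x}^{**}]\sqcap\overline{\textbf{j}(\textbf{B}_\textbf{E})}^{\mathcal{T}^{**}}=\textbf{E}^{**}|0$, the conditional singleton $[\textbf{x}^{**}]$ is conditionally compact and conditionally convex, and the conditional weak-$*$ closure $\overline{\textbf{j}(\textbf{B}_\textbf{E})}^{\mathcal{T}^{**}}$ is conditionally convex (stable convex hull of a convex set stays convex) and conditionally closed in $\mathcal{T}^{**}$. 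Applying part~2 of Theorem~\ref{thm: HB2} in the conditional locally convex space $\textbf{E}^{**}[\mathcal{T}^{**}]$ produces a conditionally $\mathcal{T}^{**}$-continuous conditionally linear functional together with $\textbf{r}\in\textbf{R}^{++}$ strictly separating $\textbf{x}^{**}$ from $\overline{\textbf{j}(\textbf{B}_\textbf{E})}^{\mathcal{T}^{**}}$. By Remark~\ref{rem: weakT} applied to the dual pair $\langle\textbf{E}^*,\textbf{E}^{**}\rangle$ we have $\textbf{E}^{**}[\mathcal{T}^{**}]^*=\textbf{E}^*$, so this separating functional is (evaluation at) some $\textbf{x}^*\in\textbf{E}^*$; rescaling, we may take $\Vert\textbf{x}^*\Vert\leq\textbf{1}$.

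The separation then reads: there is $\textbf{x}^*\in\textbf{B}_{\textbf{E}^*}$ and $\textbf{r}\in\textbf{R}^{++}$ with $\textbf{x}^{**}(\textbf{x}^*)\geq\textbf{r}+\sup$ of $\textbf{j}(\textbf{x})(\textbf{x}^*)=\textbf{x}^*(\textbf{x})$ over $\textbf{x}\in\textbf{B}_\textbf{E}$ (after possibly replacing $\textbf{x}^*$ by $-\textbf{x}^*$ to fix the sign of the inequality, or by conditioning on the partition where each sign holds and concatenating, which is the one genuinely conditional subtlety here). But $\nsup[\textbf{x}^*(\textbf{x})\:;\:\textbf{x}\in\textbf{B}_\textbf{E}]=\Vert\textbf{x}^*\Vert$ by the definition of the conditional operator norm, so $\textbf{x}^{**}(\textbf{x}^*)\geq\textbf{r}+\Vert\textbf{x}^*\Vert\geq\Vert\textbf{x}^*\Vert$, whereas $\textbf{x}^{**}\in\textbf{B}_{\textbf{E}^{**}}$ forces $|\textbf{x}^{**}(\textbf{x}^*)|\leq\Vert\textbf{x}^{**}\Vert\Vert\textbf{x}^*\Vert\leq\Vert\textbf{x}^*\Vert$; since $\textbf{r}>\textbf{0}$ on $1$ this is the desired contradiction (one should note $\Vert\textbf{x}^*\Vert$ may vanish on part of $\A$, but there $\textbf{x}^{**}(\textbf{x}^*)=\textbf{0}$ too, contradicting $\geq\textbf{r}>\textbf{0}$, so the contradiction survives the whole partition). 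I expect the main obstacle to be purely bookkeeping: ensuring the supremum defining $b$ is attained via stability, and handling the sign of the separating functional by splitting $\A$ and concatenating rather than by a single global choice. The analytic core is entirely routine once Theorem~\ref{thm: HB2} and the identification $\textbf{E}^{**}[\mathcal{T}^{**}]^*=\textbf{E}^*$ are in hand.
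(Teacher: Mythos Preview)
Your proposal is correct and follows essentially the same route as the paper: separate a point of $\textbf{B}_{\textbf{E}^{**}}$ not in the weak-$*$ closure from that closure using Theorem~\ref{thm: HB2}, identify the separating functional with some $\textbf{x}^*\in\textbf{E}^*$ via Remark~\ref{rem: weakT}, and derive a contradiction from $\Vert\textbf{x}^{**}\Vert\leq\textbf{1}$. Two small differences are worth noting. First, the paper does not carry out your explicit localization step with the supremum $b$; it argues directly by taking $\textbf{x}^{**}\in\textbf{K}^{\sqsubset}\sqcap\textbf{B}_{\textbf{E}^{**}}$ (your version, modelled on Proposition~\ref{prop: weakClosure}, is arguably more careful in the conditional setting). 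Second, the sign issue you flag as ``the one genuinely conditional subtlety'' is handled in the paper not by conditioning and concatenating, but by exploiting that $\textbf{B}_\textbf{E}$ is conditionally balanced, so that $\nsup[\textbf{x}^*(\textbf{x})\:;\:\textbf{x}\in\textbf{B}_\textbf{E}]=\nsup[|\textbf{x}^*(\textbf{x})|\:;\:\textbf{x}\in\textbf{B}_\textbf{E}]=\Vert\textbf{x}^*\Vert$ automatically; this makes your rescaling step and sign-partition unnecessary.
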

\begin{proof}
Define $\textbf{K}:=\overline{\textbf{j}(\textbf{B}_\textbf{E})}^{\sigma(\textbf{E}^{**},\textbf{E}^*)}$. Since $\textbf{B}_\textbf{E}^{**}$ is conditionally $\sigma(\textbf{E}^{**},\textbf{E}^{*})$-closed, it follows that $\textbf{K}\sqsubset\textbf{B}_\textbf{E}^{**}$. Besides, $\textbf{K}$ is conditionally convex; then, if we suppose that there exists $\textbf{x}^{**}\in\textbf{K}^{\sqsubset}\sqcap\textbf{B}_{\textbf{E}^{**}}$, Theorem \ref{thm: HB2} provides us with a conditionally $\sigma(\textbf{E}^{**},\textbf{E}^*)$-continuous linear function $\textbf{f}:\textbf{E}^{**}\rightarrow\textbf{R}$ so that
\[
\nsup\left[\textbf{f}(\textbf{s})\:;\: \textbf{s}\in \textbf{K}\right]=\textbf{x}^{**}<\textbf{f}(\textbf{x}^{**}).
\]
In addition, in view of Remark \ref{rem: weakT}, we have  $\textbf{E}^{**}[\sigma(\textbf{E}^{**},\textbf{E}^*)]^*=\textbf{E}^*$.  Consequently, we  can find $\textbf{x}^*\in \textbf{E}^{*}$ with $\textbf{f}(\textbf{z}^{**})=\textbf{z}^{**}(\textbf{x}^*)$ for all $\textbf{z}^{**}\in\textbf{E}^{**}$.

Then, being $\textbf{B}_\textbf{E}$ conditionally balanced, it follows that 
\begin{equation}
\label{eq: Gold}
\nsup\left[\textbf{x}^{*}(\textbf{x})\:;\: \textbf{x}\in \textbf{B}_\textbf{E}\right]=\nsup\left[|\textbf{x}^{*}(\textbf{x})|\:;\: \textbf{x}\in \textbf{B}_\textbf{E}\right]< \textbf{f}(\textbf{x}^{**})=\textbf{x}^{**}(\textbf{x}^*).
\end{equation}
But this is impossible given that 
\[
\textbf{x}^{**}(\textbf{x}^*)=|\textbf{x}^{**}(\textbf{x}^*)|\leq \Vert \textbf{x}^{**}\Vert\Vert \textbf{x}^*\Vert < \textbf{x}^{**}(\textbf{x}^*)\Vert \textbf{x}^{**}\Vert\leq \textbf{x}^{**}(\textbf{x}^*).
\] 
Notice that the first equality is true because the conditional supremum (\ref{eq: Gold}) is conditionally greater than or equal to $\textbf{0}$.  
\end{proof}

\section{Conditional versions of Eberlein-\v{S}mulian and Amir-Lindenstrauss theorems}
\label{sec3}

Finally, in this section we will prove the main results of the paper. Let us state the first one:

\begin{thm} 
\label{thm: EberleinSmulian}[Conditional Eberlein-\v{S}mulian theorem]
A conditional subset of a conditionally  $\textbf{E}$ normed space is conditionally weakly compact if, and only if, it is conditionally weakly sequentially compact.
\end{thm}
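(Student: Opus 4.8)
The plan is to follow the classical Whitlock--Whitley / Pettis route for Eberlein--\v{S}mulian, transcribing each step into the conditional language and invoking the conditional tools already at our disposal: the conditional Goldstine theorem (Theorem \ref{thm: Goldstine}), the conditional Mazur theorem (Proposition \ref{prop: weakClosure}), the conditional Uniform Boundedness Principle (Theorem \ref{thm: uniformBoundeness}), and the conditional Heine--Borel theorem on conditionally finitely generated spaces (Proposition \ref{prop: Heine-Borel}). By the usual local/global juggling with the complete Boolean algebra $\A$, it suffices to prove each implication on $1$, passing to $a^c$ and using consistency whenever a ``worst-case'' supremum in $\A$ fails to be attained.

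First I would treat the easy direction: conditional weak compactness implies conditional weak sequential compactness. Given a conditional sequence $\{\textbf{x}_\textbf{n}\}$ in a conditionally weakly compact $\textbf{K}$, set $\textbf{E}_\textbf{0}:=\overline{\spa_\textbf{R}[\textbf{x}_\textbf{n}\:;\:\textbf{n}\in\textbf{N}]}$, a conditionally separable conditional Banach space. One shows that on a conditionally separable space the conditional weak topology restricted to a conditionally weakly compact set is conditionally metrizable: the conditional dual unit ball $\textbf{B}_{\textbf{E}_\textbf{0}^*}$ is conditionally weak-$*$ separable (a conditional Lindel\"of-type argument on the conditionally separable $\textbf{E}_\textbf{0}$), and a conditionally countable conditionally weak-$*$ dense family $\{\textbf{x}^*_\textbf{n}\}$ yields the conditional metric $\textbf{d}(\textbf{x},\textbf{y}):=\sum_{\textbf{n}\geq\textbf{1}}\textbf{2}^{-\textbf{n}}\,|\textbf{x}^*_\textbf{n}(\textbf{x}-\textbf{y})|\wedge\textbf{1}$ inducing $\sigma(\textbf{E}_\textbf{0},\textbf{E}_\textbf{0}^*)$ on $\textbf{K}$. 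Conditional compactness plus conditional metrizability give conditional sequential compactness (here one uses the conditional equivalence of compactness and sequential compactness in the conditional metric setting, as already noted in the excerpt's footnote), and since $\sigma(\textbf{E}_\textbf{0},\textbf{E}_\textbf{0}^*)$ is the restriction of $\sigma(\textbf{E},\textbf{E}^*)$ by Hahn--Banach extension (Theorem \ref{thm: HB}), the extracted conditional subsequence conditionally weakly converges in $\textbf{E}$.

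For the hard direction, assume $\textbf{K}$ is conditionally weakly sequentially compact; I want conditional weak compactness. Fix a conditional sequence $\{\textbf{x}_\textbf{n}\}$ in $\textbf{K}$ and pass through the conditional second dual via $\textbf{j}:\textbf{E}\rightarrow\textbf{E}^{**}$ (Theorem \ref{thm: imbedding}). The conditional sequential hypothesis forces, by a conditional Uniform Boundedness argument applied to $\{\textbf{j}(\textbf{x}_\textbf{n})\}$ tested against each $\textbf{x}^*\in\textbf{E}^*$, that $\textbf{K}$ is conditionally bounded; one then shows $\overline{\textbf{j}(\textbf{K})}^{\sigma(\textbf{E}^{**},\textbf{E}^*)}$ is conditionally weak-$*$ compact (conditional Alaoglu, which should be available from the conditional Tychonoff theorem in \cite{key-7}, or re-derived). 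The core of the argument is the Whitley construction: build inductively a conditionally countable family $\{\textbf{y}^*_\textbf{k}\}\sqsubset\textbf{S}_{\textbf{E}^*}$ that ``norms'' successive conditionally finite-dimensional pieces $\spa_\textbf{R}[\textbf{x}_\textbf{1},\dots,\textbf{x}_\textbf{n},\textbf{x}^{**}]$ up to factor $\textbf{1}/\textbf{2}$ (using that conditionally finitely generated spaces have conditionally compact unit balls by Proposition \ref{prop: Heine-Borel}, so finitely many functionals suffice), where $\textbf{x}^{**}$ is a conditional weak-$*$ cluster point of $\{\textbf{j}(\textbf{x}_\textbf{n})\}$; choosing by conditional weak sequential compactness an $\textbf{x}\in\textbf{E}$ with $\textbf{x}_{\textbf{n}_\textbf{k}}\rightharpoonup\textbf{x}$, one checks $\textbf{y}^*_\textbf{k}(\textbf{x})=\textbf{x}^{**}(\textbf{y}^*_\textbf{k})$ for all $\textbf{k}$, which combined with the norming property forces $\Vert\textbf{j}(\textbf{x})-\textbf{x}^{**}\Vert=\textbf{0}$, i.e. $\textbf{x}^{**}=\textbf{j}(\textbf{x})\in\textbf{j}(\textbf{E})$. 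Hence every conditional weak-$*$ cluster point of $\textbf{j}(\textbf{K})$ lies in $\textbf{j}(\overline{\textbf{K}}^{\sigma(\textbf{E},\textbf{E}^*)})$, and since $\textbf{j}(\textbf{K})$ together with all its cluster points is conditionally weak-$*$ compact, its conditional preimage $\overline{\textbf{K}}^{\sigma(\textbf{E},\textbf{E}^*)}$ is conditionally weakly compact; finally conditional sequential compactness of $\textbf{K}$ together with conditional Mazur (Proposition \ref{prop: weakClosure}) shows $\textbf{K}$ is conditionally weakly closed, hence itself conditionally weakly compact.

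The main obstacle I anticipate is the inductive Whitley step carried out inside $\A$: one must perform a countable recursion producing conditional objects while simultaneously guaranteeing that the relevant suprema over $\A$ are attained at each stage (so that ``choose a conditionally finite norming set'' is legitimate), and that the diagonal-type extraction of $\{\textbf{x}_{\textbf{n}_\textbf{k}}\}$ is compatible with the conditional sequence/subsequence formalism of the excerpt. A secondary delicate point is establishing conditional metrizability of $\sigma(\textbf{E}_\textbf{0},\textbf{E}_\textbf{0}^*)$ on conditionally weakly compact sets over conditionally separable $\textbf{E}_\textbf{0}$, which needs a conditional Lindel\"of / countable-dense argument; but this is routine once one commits to tracking stability of all the families involved.
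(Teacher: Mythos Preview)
Your plan is the same Whitley route the paper takes, and the forward direction is essentially identical (the paper produces the metrizing family via Lemma~\ref{lem: totalSet}, a conditionally countable \emph{total} set built with Hahn--Banach, rather than via weak-$*$ separability of $\textbf{B}_{\textbf{E}_0^*}$; either works). Two points in the reverse direction need correcting.

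First, the logical order in your Whitley sketch is inverted. You write ``fix a conditional sequence $\{\textbf{x}_\textbf{n}\}$ in $\textbf{K}$ \dots\ where $\textbf{x}^{**}$ is a conditional weak-$*$ cluster point of $\{\textbf{j}(\textbf{x}_\textbf{n})\}$'', then build norming functionals for $\spa_\textbf{R}[\textbf{x}_\textbf{1},\dots,\textbf{x}_\textbf{n},\textbf{x}^{**}]$, then extract a weakly convergent subsequence. That does not work: a pre-chosen sequence may have several weak cluster points, and the subsequential limit $\textbf{x}$ need not match the particular $\textbf{x}^{**}$ you picked. The correct order (and what the paper does) is: start from an \emph{arbitrary} $\textbf{x}^{**}\in\overline{\textbf{j}(\textbf{K})}^{\sigma(\textbf{E}^{**},\textbf{E}^*)}$, and then \emph{interleave} the construction of $\textbf{z}_k\in\textbf{K}$ and of the norming functionals, choosing each $\textbf{z}_k$ so that $|(\textbf{x}^{**}-\textbf{j}(\textbf{z}_k))(\textbf{x}^*_\textbf{n})|\leq\textbf{1}/\textbf{k}$ on the functionals built so far. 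Only this interleaving yields the estimate needed to conclude $\textbf{x}^{**}=\textbf{j}(\textbf{x})$.

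Second, your closing step---``conditional sequential compactness of $\textbf{K}$ together with conditional Mazur shows $\textbf{K}$ is conditionally weakly closed''---is both unnecessary and invalid: Proposition~\ref{prop: weakClosure} requires conditional convexity, which $\textbf{K}$ need not have. You do not need it. The Whitley output $\textbf{x}$ is the weak limit of a subsequence of a sequence in $\textbf{K}$, so $\textbf{x}\in\textbf{K}$ by the sequential-compactness hypothesis; hence $\textbf{x}^{**}=\textbf{j}(\textbf{x})\in\textbf{j}(\textbf{K})$, giving $\overline{\textbf{j}(\textbf{K})}^{\sigma(\textbf{E}^{**},\textbf{E}^*)}=\textbf{j}(\textbf{K})$ directly, and Lemma~\ref{lem: WSClosure} (plus conditional Banach--Alaoglu) finishes.
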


The other important result drawn from classical study of weak topologies and extended to the framework of conditional sets, is the Amir-Lindenstrauss theorem. Conditional Banach-Alaoglu's theorem (see \cite[Theorem 5.10]{key-7})  ensures that every conditionally bounded sequence in the conditional dual of a conditionally normed space has a conditionally weakly-$*$ convergent subnet. The conditional version of Amir-Lindenstrauss theorem gives sufficient conditions that ensure that one can extract a conditionally weakly-$*$ convergent subsequence. 

Before stating this theorem we must introduce the following notion:   

\begin{defn}
A conditional Banach space $\textbf{E}$ is said to be conditionally weakly compactly generated if there is a conditional subset $\textbf{C}$ of $\textbf{E}$ on $1$ which is conditionally weakly compact, conditionally convex and conditionally balanced,  such that $\overline{\spa_\textbf{R}\textbf{C}}=\textbf{E}$.
\end{defn}

\begin{thm}\label{thm: AmirLindenstrauss}[Conditional Amir-Lindenstrauss theorem]  Every  conditionally weakly compactly generated Banach space has a conditionally weakly-$*$ sequentially compact dual unit ball.
\end{thm}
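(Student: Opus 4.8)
The plan is to follow the classical Amir--Lindenstrauss scheme, reducing the statement to a conditional ``interchange of limits'' extraction on the conditionally weakly compact generator. Let $\textbf{C}\sqsubset\textbf{E}$ on $1$ be conditionally weakly compact, conditionally convex and conditionally balanced with $\overline{\spa_\textbf{R}\textbf{C}}=\textbf{E}$. Since $\textbf{C}$ is conditionally weakly compact it is conditionally norm-bounded --- one applies the conditional Uniform Boundedness Principle (Theorem \ref{thm: uniformBoundeness}) to the conditional family $\textbf{j}(\textbf{C})$ of conditional functionals on the conditional Banach space $\textbf{E}^*$, where $\textbf{j}$ is the natural conditional embedding (Theorem \ref{thm: imbedding}) --- so, replacing $\textbf{C}$ by $\textbf{s}^{-1}\textbf{C}$ for a suitable $\textbf{s}\in\textbf{R}^{++}$, we may assume $\textbf{C}\sqsubset\textbf{B}_\textbf{E}$; this alters neither conditional weak compactness nor $\spa_\textbf{R}\textbf{C}$. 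Using conditional convexity and balancedness one checks $\spa_\textbf{R}\textbf{C}=\sqcup_{\textbf{n}\in\textbf{N}}\textbf{n}\textbf{C}$, so $\sqcup_{\textbf{n}\in\textbf{N}}\textbf{n}\textbf{C}$ is conditionally dense in $\textbf{E}$; and by the conditional Eberlein--\v{S}mulian theorem (Theorem \ref{thm: EberleinSmulian}) $\textbf{C}$ is conditionally weakly sequentially compact. It suffices to prove that every conditional sequence $\{\textbf{x}^*_\textbf{n}\}$ in $\textbf{B}_{\textbf{E}^*}$ has a conditional subsequence conditionally $\sigma(\textbf{E}^*,\textbf{E})$-convergent to some $\textbf{x}^*\in\textbf{B}_{\textbf{E}^*}$.

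The heart of the argument is a conditional version of Grothendieck's interchange-of-limits criterion, which I would isolate as a separate lemma: \emph{if $\textbf{C}$ is conditionally weakly compact and $\{\textbf{x}^*_\textbf{n}\}\sqsubset\textbf{B}_{\textbf{E}^*}$, then there is a conditional subsequence $\{\textbf{x}^*_{\textbf{n}_\textbf{k}}\}$ such that $\nlim_\textbf{k}\textbf{x}^*_{\textbf{n}_\textbf{k}}(\textbf{x})$ exists for every $\textbf{x}\in\textbf{C}$}. One way to obtain it is to prove first a conditional Grothendieck theorem --- a conditionally bounded, conditionally pointwise-closed family of conditionally weakly continuous functions on $\textbf{C}$ is conditionally weakly compact in the conditional space of such functions --- and then apply the conditional Eberlein--\v{S}mulian theorem to the conditional sequence of restrictions $\{\textbf{x}^*_\textbf{n}|_\textbf{C}\}$; here one uses conditional Banach--Alaoglu to see that the conditional pointwise cluster values of that sequence are again of the form $\textbf{x}^*|_\textbf{C}$ with $\textbf{x}^*$ a conditional functional, hence conditionally weakly continuous on $\textbf{C}$. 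Alternatively one can follow the original Amir--Lindenstrauss construction: using conditional Hahn--Banach separation (Theorem \ref{thm: HB2}) together with conditional Eberlein--\v{S}mulian, build by a conditional exhaustion/separation procedure a conditionally linear conditionally bounded map from $\textbf{E}$ into a conditional analogue of $c_0(\Gamma)$ that is conditionally injective on $\textbf{C}$, and transport the extraction through it. This is the only step where the hypothesis is genuinely used: conditional compactness of $\textbf{C}$ is what makes the extraction possible at all.

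Granted this lemma, put $\ell(\textbf{x}):=\nlim_\textbf{k}\textbf{x}^*_{\textbf{n}_\textbf{k}}(\textbf{x})$ for $\textbf{x}\in\textbf{C}$; by conditional linearity of the $\textbf{x}^*_{\textbf{n}_\textbf{k}}$ this conditional limit then exists and is conditionally linear on $\spa_\textbf{R}\textbf{C}=\sqcup_{\textbf{n}\in\textbf{N}}\textbf{n}\textbf{C}$. For an arbitrary $\textbf{x}\in\textbf{E}$ and $\textbf{r}\in\textbf{R}^{++}$, pick $\textbf{y}\in\spa_\textbf{R}\textbf{C}$ with $\Vert\textbf{x}-\textbf{y}\Vert\leq\textbf{r}$; since $\Vert\textbf{x}^*_{\textbf{n}_\textbf{k}}\Vert\leq\textbf{1}$ for all $\textbf{k}$, the conditional triangle inequality shows that $\{\textbf{x}^*_{\textbf{n}_\textbf{k}}(\textbf{x})\}_\textbf{k}$ is conditionally Cauchy in $\textbf{R}$, hence conditionally convergent since $\textbf{R}$ is conditionally complete. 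Thus $\nlim_\textbf{k}\textbf{x}^*_{\textbf{n}_\textbf{k}}(\textbf{x})$ exists for every $\textbf{x}\in\textbf{E}$; passing to the conditional limit in conditional linearity and in $|\textbf{x}^*_{\textbf{n}_\textbf{k}}(\textbf{x})|\leq\Vert\textbf{x}\Vert$, it defines a conditionally linear conditionally continuous functional of conditional norm $\leq\textbf{1}$, i.e. some $\textbf{x}^*\in\textbf{B}_{\textbf{E}^*}$, and by construction $\textbf{x}^*_{\textbf{n}_\textbf{k}}$ conditionally $\sigma(\textbf{E}^*,\textbf{E})$-converges to $\textbf{x}^*$. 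Hence $\textbf{x}^*$ is a conditional limit point of the conditional subsequence in $(\textbf{B}_{\textbf{E}^*},\sigma(\textbf{E}^*,\textbf{E}))$, and $\textbf{B}_{\textbf{E}^*}$ is conditionally weakly-$*$ sequentially compact.

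The main difficulty will be the key lemma, and two points require care beyond the classical argument. First, each extraction must yield a genuine \emph{conditional} subsequence, i.e. a conditional sequence $\{\textbf{n}_\textbf{k}\}$ in $\textbf{N}$ chosen stably over $\A$; this is arranged by the well-ordering / exhaustion-along-a-partition device already used in the proof of the conditional Baire Category theorem (Theorem \ref{thm: baire}), gluing the candidate subsequences obtained on the pieces of a partition of $1$. Second, the classical non-separable proof proceeds by transfinite induction on the density character (through a projectional resolution of the identity), which is unwieldy in the conditional framework; routing everything through the Grothendieck-type criterion is exactly what keeps the construction conditionally countable and reduces its extra ingredients to conditional Eberlein--\v{S}mulian and conditional Hahn--Banach. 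Carrying out the conditional Grothendieck theorem --- or, in the alternative route, the conditional Amir--Lindenstrauss embedding, together with the bookkeeping of the conditional cardinality of the auxiliary index set $\Gamma$ --- is the hard part.
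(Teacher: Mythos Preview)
Your overall scheme --- pointwise extraction on the generator $\textbf{C}$, extension to $\spa_\textbf{R}\textbf{C}$ by linearity, then to $\textbf{E}$ by density and uniform boundedness --- is sound, and the last two steps are correctly written. But the route you take is quite different from the paper's, and the step you yourself flag as ``the hard part'' is in fact a genuine gap that is not smaller than the theorem itself.

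The paper does \emph{not} go through a Grothendieck interchange-of-limits criterion or through an Amir--Lindenstrauss $c_0(\Gamma)$-embedding. Instead it uses a conditional Davis--Figiel--Johnson--Pe\l czy\'nski factorization: starting from the conditionally weakly compact, convex, balanced $\textbf{K}$, Lemma~\ref{lem: Amir-Lin} builds a larger such set $\textbf{C}$ (via the gauges of $\textbf{2}^\textbf{n}\textbf{K}+\textbf{2}^{-\textbf{n}}\textbf{B}_\textbf{E}$ and the $\ell^2$-sum construction of Lemma~\ref{lem: l2}) so that $(\textbf{E}_\textbf{C},\Vert\cdot\Vert_\textbf{C})$ is conditionally \emph{reflexive}. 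Then Theorem~\ref{thm: reflex} (reflexive $\Rightarrow$ dual unit ball is conditionally weakly-$*$ sequentially compact, via conditional Eberlein--\v{S}mulian) and Proposition~\ref{prop: seqCompactClass} (the property transfers along conditionally dense continuous linear images) finish the proof in a few lines. The heavy lifting is entirely in Lemma~\ref{lem: Amir-Lin} and its prerequisites (Lemmas~\ref{lem: Groth}, \ref{lem: banachDisk}, \ref{lem: l2}), all of which are developed explicitly in the paper with the available conditional tools.

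By contrast, your key lemma --- a conditional subsequence converging pointwise on all of $\textbf{C}$ --- is not proved, and neither route you sketch is close to immediate in the conditional framework. A conditional ``Grothendieck theorem'' on $\textbf{C}$ would require a conditional theory of angelic spaces or of weak compactness in conditional $C(\textbf{K})$-type spaces, none of which is set up here; and the $c_0(\Gamma)$-embedding route involves a transfinite projectional construction whose conditional analogue (with stable, possibly uncountable index sets) you rightly call ``unwieldy''. So as written your proposal reduces the theorem to a lemma of at least equal difficulty. The paper's DFJP approach buys you exactly what you are missing: it replaces the delicate extraction on $\textbf{C}$ by the single structural fact that $\textbf{C}$ sits inside the unit ball of a conditionally reflexive space, after which conditional Eberlein--\v{S}mulian --- already in hand --- does the sequential work for you.
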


Before proving these two theorems, we need to make a remark. We claim that $1=\supp(\textbf{E}^*)=\supp(\textbf{E}^{**})=...$ Indeed, we assumed $\supp(\textbf{E})=1$. Then, we can choose $\textbf{x}\in [\textbf{0}]^\sqsubset$ with $\supp(\textbf{x})=1$. Reasoning as in the proof of Theorem \ref{thm: imbedding}, we can find $\textbf{x}^*\in\textbf{E}^*$ such that $\textbf{x}^*(\textbf{x})=\Vert\textbf{x}\Vert$. In particular $\textbf{x}^*\in[\textbf{0}]^\sqsubset$. This proves that $\supp(\textbf{E}^*)=1$. The same applies to $\textbf{E}^{**}=(\textbf{E}^*)^*$, and so on.

Now, let us turn to show some preliminary results:

\begin{lem}
\label{lem: L0bounded}
Let $\textbf{E}$ be a conditionally normed space. Suppose that $\textbf{K}\sqsubset\textbf{E}$ is conditionally weakly compact, then $\textbf{K}$ is conditionally bounded and conditionally norm closed. 
\end{lem}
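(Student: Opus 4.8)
The plan is to mimic the classical argument, translating each step carefully into the conditional language. The statement has two parts: $\textbf{K}$ is conditionally bounded and $\textbf{K}$ is conditionally norm closed.

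For conditional boundedness, first I would exploit the natural conditional embedding $\textbf{j}:\textbf{E}\to\textbf{E}^{**}$ from Theorem \ref{thm: imbedding}, which is a conditional isometry. The conditional weak topology $\sigma(\textbf{E},\textbf{E}^*)$ is, via $\textbf{j}$, the restriction to $\textbf{j}(\textbf{E})$ of $\sigma(\textbf{E}^{**},\textbf{E}^*)$; hence $\textbf{j}(\textbf{K})$ is conditionally $\sigma(\textbf{E}^{**},\textbf{E}^*)$-compact. Now for each fixed $\textbf{x}^*\in\textbf{E}^*$, the conditional evaluation $\textbf{z}^{**}\mapsto\textbf{z}^{**}(\textbf{x}^*)$ is conditionally continuous from $\sigma(\textbf{E}^{**},\textbf{E}^*)$ to $(\textbf{R},|\cdot|)$, so its conditional image of $\textbf{j}(\textbf{K})$ is conditionally compact in $\textbf{R}$, hence conditionally bounded by the conditional Heine--Borel theorem (Proposition \ref{prop: Heine-Borel} applied to $\textbf{R}^{\textbf{1}}=\textbf{R}$). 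Thus for every $\textbf{x}^*\in\textbf{E}^*$ there is $\textbf{r}(\textbf{x}^*)\in\textbf{R}^{++}$ with $|\textbf{j}(\textbf{x})(\textbf{x}^*)|=|\textbf{x}^*(\textbf{x})|\leq\textbf{r}(\textbf{x}^*)$ for all $\textbf{x}\in\textbf{K}$. Viewing $\textbf{S}:=\textbf{j}(\textbf{K})$ as a conditional subset of $\textbf{L}(\textbf{E}^*,\textbf{R})=\textbf{E}^{**}$ and recalling that $\textbf{E}^*$ is conditionally Banach, the conditional Uniform Boundedness Principle (Theorem \ref{thm: uniformBoundeness}) yields $\textbf{s}\in\textbf{R}^{++}$ with $\|\textbf{j}(\textbf{x})\|\leq\textbf{s}$ for all $\textbf{x}\in\textbf{K}$; since $\textbf{j}$ is a conditional isometry, $\|\textbf{x}\|\leq\textbf{s}$ for all $\textbf{x}\in\textbf{K}$, i.e. $\textbf{K}$ is conditionally bounded.

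For conditional norm closedness, I would observe that $\textbf{K}$, being conditionally weakly compact, is conditionally $\sigma(\textbf{E},\textbf{E}^*)$-closed: indeed a conditionally compact conditional subset of a conditionally Hausdorff conditional topological space is conditionally closed, and $\sigma(\textbf{E},\textbf{E}^*)$ is conditionally Hausdorff because $\textbf{E}^*$ separates the conditional elements of $\textbf{E}$ (a consequence of Theorem \ref{thm: HB}, as exploited in the remark preceding Lemma \ref{lem: L0bounded}). Since the conditional norm topology $\mathcal{T}$ is conditionally finer than $\sigma(\textbf{E},\textbf{E}^*)$, any conditionally $\sigma(\textbf{E},\textbf{E}^*)$-closed conditional subset is conditionally $\mathcal{T}$-closed; hence $\textbf{K}$ is conditionally norm closed. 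Alternatively, and perhaps more in the spirit of the paper's sequential arguments, one can take a conditional sequence in $\textbf{K}$ conditionally norm-converging to some $\textbf{x}\in\textbf{E}$, note that it then conditionally weakly converges to $\textbf{x}$, and use conditional weak compactness together with conditional Hausdorffness to conclude $\textbf{x}\in\textbf{K}$.

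The main obstacle I anticipate is purely bookkeeping rather than conceptual: one must be careful that all the objects involved are genuinely conditional subsets \emph{on $1$} (so that Theorem \ref{thm: uniformBoundeness} applies), that $\textbf{j}(\textbf{K})$ is stable, and that the conditional continuity of the evaluation maps and the identification of the conditional relative weak topology on $\textbf{j}(\textbf{E})$ are justified — all of which follow by inspection from the definitions but require the right stability statements. A secondary subtlety is that conditional boundedness of a conditional subset of $\textbf{R}$ coming from conditional compactness needs the conditional Heine--Borel direction (conditionally compact $\Rightarrow$ conditionally bounded and conditionally closed), which is the easy half and can be cited from Proposition \ref{prop: Heine-Borel} or argued directly.
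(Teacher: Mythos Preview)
Your proposal is correct and follows essentially the same route as the paper for conditional boundedness: both arguments show that for each $\textbf{x}^*\in\textbf{E}^*$ the conditional set $\textbf{x}^*(\textbf{K})=\{\textbf{j}(\textbf{x})(\textbf{x}^*):\textbf{x}\in\textbf{K}\}$ is conditionally compact (as the continuous image of a conditionally compact set) and hence conditionally bounded in $\textbf{R}$, and then invoke the conditional Uniform Boundedness Principle on $\textbf{j}(\textbf{K})\sqsubset\textbf{E}^{**}=\textbf{L}(\textbf{E}^*,\textbf{R})$ with $\textbf{E}^*$ conditionally Banach. The only cosmetic difference is that the paper applies $\textbf{x}^*$ directly to $\textbf{K}$ (using that $\textbf{x}^*$ is conditionally weakly continuous) before invoking the embedding, whereas you pass through $\textbf{j}$ first and then evaluate; these are literally the same computation since $\textbf{j}(\textbf{x})(\textbf{x}^*)=\textbf{x}^*(\textbf{x})$.

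For conditional norm closedness the paper's proof is in fact silent, so your argument (conditionally compact in a conditionally Hausdorff space is conditionally closed, and the conditional norm topology is finer than $\sigma(\textbf{E},\textbf{E}^*)$) supplies what the paper leaves to the reader.
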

\begin{proof}
Let us suppose w.l.g. $\textbf{K}$ on $1$. If $\textbf{x}^*\in \textbf{E}^*$, then $\textbf{x}^*$ is a conditionally weakly continuous function, and $\textbf{x}^*(\textbf{K})$ is therefore conditionally weakly compact (see \cite[Proposition 3.26]{key-7}), hence it is a conditionally bounded subset of $\textbf{R}$. Since $\textbf{K}$ can be conditionally embedded into $\textbf{E}^{**}$ (see Theorem \ref{thm: imbedding}), we can apply Theorem \ref{thm: uniformBoundeness} obtaining that $\textbf{K}$ is conditionally bounded.   
\end{proof}

\begin{lem}
\label{lem: WSClosure}
Let $\textbf{E}$ be a conditionally normed space and suppose $\textbf{K}\sqsubset \textbf{E}$ on $1$, conditionally bounded and conditionally weakly closed. Then, $\overline{\textbf{j}(\textbf{K})}^{\sigma(\textbf{E}^{**},\textbf{E}^*)}\sqsubset \textbf{j}(\textbf{E})$, where $\textbf{j}$ is the natural conditional embedding  if, and only if, $\textbf{K}$ is conditionally weakly compact.
\end{lem}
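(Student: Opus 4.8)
The plan is to mimic the classical argument relating weak compactness of $\textbf{K}$ to the fact that its image under the canonical embedding stays inside $\textbf{j}(\textbf{E})$, using the conditional version of Goldstine's theorem (Theorem \ref{thm: Goldstine}) and conditional Banach--Alaoglu in the background. First I would dispose of the easy implication: suppose $\textbf{K}$ is conditionally weakly compact. Then $\textbf{j}$ restricted to $\textbf{K}$, being a conditional isometry (Theorem \ref{thm: imbedding}) and $\sigma(\textbf{E},\textbf{E}^*)$-to-$\sigma(\textbf{E}^{**},\textbf{E}^*)$ conditionally continuous, carries $\textbf{K}$ to a conditionally $\sigma(\textbf{E}^{**},\textbf{E}^*)$-compact subset $\textbf{j}(\textbf{K})$ of $\textbf{j}(\textbf{E})$; a conditionally compact subset of a conditionally Hausdorff space is conditionally closed, so $\overline{\textbf{j}(\textbf{K})}^{\sigma(\textbf{E}^{**},\textbf{E}^*)}=\textbf{j}(\textbf{K})\sqsubset\textbf{j}(\textbf{E})$. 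Here I would invoke the conditional continuity of $\textbf{j}$ for the two weak topologies, which follows because $\textbf{x}^*\circ\textbf{j}=\textbf{T}_{(\cdot)}(\textbf{x}^*)$ is exactly evaluation against $\textbf{x}^*\in\textbf{E}^*$, i.e. a conditional seminorm generating $\sigma(\textbf{E},\textbf{E}^*)$.

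For the converse, assume $\overline{\textbf{j}(\textbf{K})}^{\sigma(\textbf{E}^{**},\textbf{E}^*)}\sqsubset\textbf{j}(\textbf{E})$. Since $\textbf{K}$ is conditionally bounded, we may fix $\textbf{r}\in\textbf{R}^{++}$ with $\textbf{K}\sqsubset\textbf{r}\,\textbf{B}_\textbf{E}$, so $\textbf{j}(\textbf{K})\sqsubset\textbf{r}\,\textbf{B}_{\textbf{E}^{**}}$; by conditional Banach--Alaoglu the conditional ball $\textbf{r}\,\textbf{B}_{\textbf{E}^{**}}$ is conditionally $\sigma(\textbf{E}^{**},\textbf{E}^*)$-compact, hence the conditionally closed subset $\textbf{L}:=\overline{\textbf{j}(\textbf{K})}^{\sigma(\textbf{E}^{**},\textbf{E}^*)}$ is conditionally $\sigma(\textbf{E}^{**},\textbf{E}^*)$-compact. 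By hypothesis $\textbf{L}=\textbf{j}(\textbf{M})$ for some conditional subset $\textbf{M}\sqsubset\textbf{E}$ on $1$; as $\textbf{j}$ is a conditional homeomorphism from $\textbf{E}[\sigma(\textbf{E},\textbf{E}^*)]$ onto $\textbf{j}(\textbf{E})$ with the relative $\sigma(\textbf{E}^{**},\textbf{E}^*)$-topology (again because $\textbf{x}^*\circ\textbf{j}$ ranges over exactly the generating seminorms, and $\textbf{j}$ is conditionally injective), $\textbf{M}$ is conditionally $\sigma(\textbf{E},\textbf{E}^*)$-compact. It then remains to identify $\textbf{M}$ with $\textbf{K}$: clearly $\textbf{K}\sqsubset\textbf{M}$, and since $\textbf{M}$ is conditionally weakly compact it is conditionally weakly closed (by the easy direction applied to $\textbf{M}$, or directly since conditionally compact subsets are conditionally closed), so $\overline{\textbf{K}}^{\sigma(\textbf{E},\textbf{E}^*)}\sqsubset\textbf{M}$; but $\textbf{K}$ is assumed conditionally weakly closed, so actually one needs $\textbf{M}\sqsubset\textbf{K}$. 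For this, observe $\textbf{j}(\textbf{M})=\textbf{L}=\overline{\textbf{j}(\textbf{K})}^{\sigma(\textbf{E}^{**},\textbf{E}^*)}$, and since $\textbf{j}$ is a conditional homeomorphism onto its image, $\textbf{j}(\overline{\textbf{K}}^{\sigma(\textbf{E},\textbf{E}^*)})=\overline{\textbf{j}(\textbf{K})}^{\textbf{j}(\textbf{E})}$; intersecting $\textbf{L}$ with $\textbf{j}(\textbf{E})$ gives $\textbf{j}(\textbf{M})=\textbf{j}(\overline{\textbf{K}}^{\sigma(\textbf{E},\textbf{E}^*)})$, hence by conditional injectivity $\textbf{M}=\overline{\textbf{K}}^{\sigma(\textbf{E},\textbf{E}^*)}=\textbf{K}$. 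Thus $\textbf{K}=\textbf{M}$ is conditionally weakly compact.

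The step I expect to be the genuine obstacle is the careful verification that $\textbf{j}$ is a conditional homeomorphism onto its conditional image equipped with the relative conditional $\sigma(\textbf{E}^{**},\textbf{E}^*)$-topology, together with the attendant bookkeeping that conditional closures and conditional compactness transfer correctly along $\textbf{j}$ and along conditional subsets on $1$. In the conditional setting one must be attentive to the partition/concatenation structure: conditional continuity is tested against the generating conditional seminorms $\textbf{p}_{\textbf{x}^*}$, and the identity $\textbf{p}_{\textbf{x}^*}\circ\textbf{j}=|\textbf{x}^*(\cdot)|$ is what makes the argument go through, but matching up the relative conditional topology on $\textbf{j}(\textbf{E})$ with the conditional weak topology on $\textbf{E}$ requires the conditional homeomorphism statement to be stated and used precisely. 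Everything else is a conditional transcription of the classical reflexivity-type argument, using Theorem \ref{thm: Goldstine} only implicitly (through Banach--Alaoglu and the density of $\textbf{j}(\textbf{B}_\textbf{E})$) and otherwise relying on conditional Hausdorffness to turn conditional compactness into conditional closedness.
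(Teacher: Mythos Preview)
Your proposal is correct and follows essentially the same approach as the paper for the substantive direction (closure contained in $\textbf{j}(\textbf{E})$ implies $\textbf{K}$ conditionally weakly compact): conditional Banach--Alaoglu plus the identification $\sigma(\textbf{E}^{**},\textbf{E}^*)|_{\textbf{j}(\textbf{E})}=\sigma(\textbf{j}(\textbf{E}),\textbf{E}^*)$; the paper compresses your $\textbf{M}=\textbf{K}$ step into the single chain $\textbf{j}(\textbf{K})=\overline{\textbf{j}(\textbf{K})}^{\sigma(\textbf{j}(\textbf{E}),\textbf{E}^*)}=\overline{\textbf{j}(\textbf{K})}^{\sigma(\textbf{E}^{**},\textbf{E}^*)}\sqcap \textbf{j}(\textbf{E})=\overline{\textbf{j}(\textbf{K})}^{\sigma(\textbf{E}^{**},\textbf{E}^*)}$, but the content is identical. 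The paper in fact omits the proof of the reverse (``easy'') implication altogether, so your first paragraph supplies something the paper leaves to the reader.
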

\begin{proof} 
Suppose that $\overline{\textbf{j}(\textbf{K})}^{\sigma(\textbf{E}^{**},\textbf{E}^*)}\sqsubset \textbf{j}(\textbf{E})$. By the conditional theorem of Banach-Alaoglu (see \cite[Theorem 5.10]{key-7}) we have that $\overline{\textbf{j}(\textbf{K})}^{\sigma(\textbf{E}^{**},\textbf{E}^*)}$ is conditionally $\sigma(\textbf{E}^{**},\textbf{E}^*)$-compact. Since $\sigma(\textbf{E}^{**},\textbf{E}^*)|_{\textbf{j}(\textbf{E})}=\sigma(\textbf{j}(\textbf{E}),\textbf{E}^*)$, it follows that
\[
\textbf{j}(\textbf{K})=\overline{\textbf{j}(\textbf{K})}^{\sigma(\textbf{j}(\textbf{E}),\textbf{E}^*)}=\overline{\textbf{j}(\textbf{K})}^{\sigma(\textbf{E}^{**},\textbf{E}^*)}\sqcap \textbf{j}(\textbf{E}) = \overline{\textbf{j}(\textbf{K})}^{\sigma(\textbf{E}^{**},\textbf{E}^*)}.
\]
We conclude that $\textbf{K}$ is conditionally weakly compact.

\end{proof}

\begin{lem}
\label{lem: totalSet}
Assume that $\textbf{E}$ is conditionally Banach and conditionally separable. Then, there is a conditionally countable total subset $\textbf{D}$ of $\textbf{E}^*$.
\end{lem}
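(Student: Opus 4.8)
The plan is to follow the classical construction while keeping track of stability: from a conditionally countable conditionally dense subset of $\textbf{E}$ I attach to each of its conditional elements a norming functional via conditional Hahn--Banach, and then recover totality by a conditional sequential density argument. Concretely, I would first use conditional separability to fix a conditionally countable conditionally dense conditional subset $\textbf{F}\sqsubset\textbf{E}$ on $1$, written as the conditional image $\textbf{f}(\textbf{N})$ of a conditional function $\textbf{f}:\textbf{N}\rightarrow\textbf{E}$ with $\textbf{f}(\textbf{n})=\textbf{x}_\textbf{n}$ (padding with repetitions on the conditionally finite part, if at all), so that $\textbf{F}=[\textbf{x}_\textbf{n}\:;\:\textbf{n}\in\textbf{N}]$ for a conditional family $\{\textbf{x}_\textbf{n}\}_{\textbf{n}\in\textbf{N}}$ of conditional elements of $\textbf{E}$. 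For each classical $n\in\N$ the conditional element $\textbf{x}_{n|1}$ lies in $\textbf{E}$, and reasoning exactly as in the proof of Theorem~\ref{thm: imbedding} --- applying Theorem~\ref{thm: HB} to the conditionally linear functional on $\spa_\textbf{R}[\textbf{x}_{n|1}]$ sending $\textbf{r}\,\textbf{x}_{n|1}$ to $\textbf{r}\Vert\textbf{x}_{n|1}\Vert$ --- one gets $\textbf{x}^*_{n|1}\in\textbf{E}^*$ with $\Vert\textbf{x}^*_{n|1}\Vert\leq\textbf{1}$ and $\textbf{x}^*_{n|1}(\textbf{x}_{n|1})=\Vert\textbf{x}_{n|1}\Vert$. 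Then I would extend $n\mapsto\textbf{x}^*_{n|1}$ by concatenation to the unique stable family $\{\textbf{x}^*_\textbf{n}\}_{\textbf{n}\in\textbf{N}}$ with $\textbf{x}^*_{(\sum_i n_i|a_i)|1}=\bigl(\sum_i x^*_{n_i}|a_i\bigr)|1$; this is automatically a conditional family, and since the conditional norm and conditional evaluation commute with concatenations, the identities $\Vert\textbf{x}^*_\textbf{n}\Vert\leq\textbf{1}$ and $\textbf{x}^*_\textbf{n}(\textbf{x}_\textbf{n})=\Vert\textbf{x}_\textbf{n}\Vert$ persist for every $\textbf{n}\in\textbf{N}$. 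The candidate is
\[
\textbf{D}:=[\textbf{x}^*_\textbf{n}\:;\:\textbf{n}\in\textbf{N}],
\]
a conditionally countable conditional subset of $\textbf{E}^*$ on $1$ (it is the conditional image of $\textbf{N}$ under a conditional function, and $\textbf{E}^*$ is stable).

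Next I would check that $\textbf{D}$ is conditionally total. Let $\textbf{x}\in\textbf{E}$ satisfy $\textbf{x}^*(\textbf{x})=\textbf{0}$ for all $\textbf{x}^*\in\textbf{D}$; since every conditional element of $\textbf{D}$ is $\textbf{x}^*_\textbf{n}$ for some $\textbf{n}\in\textbf{N}$, this means $\textbf{x}^*_\textbf{n}(\textbf{x})=\textbf{0}$ for all $\textbf{n}\in\textbf{N}$. Using that $\textbf{F}$ is conditionally dense on $1$, the same exhaustion/gluing pattern as in the proof of Theorem~\ref{thm: baire} --- put $b_\textbf{k}:=\vee\{a\in\A\:;\:\textbf{B}_{\textbf{1}/\textbf{k}}(\textbf{x})|a\sqcap\textbf{F}|a\textnormal{ is on }a\}$, check $b_\textbf{k}=1$, and concatenate, a stable choice being available as in that proof --- yields a conditional sequence $\{\textbf{y}_\textbf{k}\}$ in $\textbf{F}$ with $\Vert\textbf{y}_\textbf{k}-\textbf{x}\Vert\leq\textbf{1}/\textbf{k}$. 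As $\textbf{y}_\textbf{k}$ is a conditional element of $\textbf{F}=[\textbf{x}_\textbf{n}\:;\:\textbf{n}\in\textbf{N}]$, there is $\textbf{m}_\textbf{k}\in\textbf{N}$ with $\textbf{y}_\textbf{k}=\textbf{x}_{\textbf{m}_\textbf{k}}$, so $\textbf{x}^*_{\textbf{m}_\textbf{k}}\in\textbf{D}$ satisfies $\textbf{x}^*_{\textbf{m}_\textbf{k}}(\textbf{y}_\textbf{k})=\Vert\textbf{y}_\textbf{k}\Vert$ and $\Vert\textbf{x}^*_{\textbf{m}_\textbf{k}}\Vert\leq\textbf{1}$. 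Using $\textbf{x}^*_{\textbf{m}_\textbf{k}}(\textbf{x})=\textbf{0}$,
\[
\Vert\textbf{y}_\textbf{k}\Vert=\textbf{x}^*_{\textbf{m}_\textbf{k}}(\textbf{y}_\textbf{k})=\textbf{x}^*_{\textbf{m}_\textbf{k}}(\textbf{y}_\textbf{k}-\textbf{x})\leq|\textbf{x}^*_{\textbf{m}_\textbf{k}}(\textbf{y}_\textbf{k}-\textbf{x})|\leq\Vert\textbf{x}^*_{\textbf{m}_\textbf{k}}\Vert\,\Vert\textbf{y}_\textbf{k}-\textbf{x}\Vert\leq\textbf{1}/\textbf{k},
\]
whence $\Vert\textbf{x}\Vert\leq\Vert\textbf{x}-\textbf{y}_\textbf{k}\Vert+\Vert\textbf{y}_\textbf{k}\Vert\leq\textbf{2}/\textbf{k}$ for every $\textbf{k}\in\textbf{N}$. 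Since $\ninf[\textbf{2}/\textbf{k}\:;\:\textbf{k}\in\textbf{N}]=\textbf{0}$ in $\textbf{R}$, this forces $\Vert\textbf{x}\Vert=\textbf{0}$, i.e. $\textbf{x}=\textbf{0}$, so $\textbf{D}$ is conditionally total and the lemma follows.

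The step I expect to require the most care is one that is invisible in the classical argument: ensuring that the norming functionals genuinely assemble into a \emph{conditional} family, since conditional Hahn--Banach supplies existence only for one conditional element at a time and offers no canonical, let alone stable, choice. Performing the selection solely at the classical generators $n\in\N$ and letting the stable extension do the rest sidesteps the difficulty, but one must then verify --- through stability of the conditional norm and of conditional evaluation --- that the two defining identities survive concatenation; this is routine but is the technical heart. The only other point demanding an argument on partitions rather than a direct appeal is the passage from conditional density to a conditionally convergent conditional sequence $\{\textbf{y}_\textbf{k}\}$ with the prescribed rate $\textbf{1}/\textbf{k}$, but this is precisely the exhaustion pattern already used in Theorem~\ref{thm: baire} and poses no real obstacle.
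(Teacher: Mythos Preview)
Your proof is correct and follows essentially the same route as the paper: attach norming functionals to the elements of a conditionally countable dense set via conditional Hahn--Banach (as in Theorem~\ref{thm: imbedding}), then recover totality by approximating an arbitrary $\textbf{x}$ from the dense set and using $\Vert\textbf{x}^*_\textbf{n}\Vert\leq\textbf{1}$. The one point worth noting is precisely the step you flagged: to make the family of functionals conditional, the paper does not choose at the classical generators and concatenate as you do, but instead forms the conditional family of conditional subsets $\textbf{A}_\textbf{x}:=[\textbf{z}^*\in\textbf{B}_{\textbf{E}^*}\:;\:\textbf{z}^*(\textbf{x})=\Vert\textbf{x}\Vert]$ and invokes the conditional axiom of choice. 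Your device is lighter and works here because $\textbf{N}$ is generated by $\N$; the paper's is more uniform but uses heavier machinery.
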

\begin{proof}
First of all note that $\textbf{C}$ is on $1$, since it is conditionally dense.

For each $\textbf{x}\in \textbf{C}$, as we argued in the proof of Theorem \ref{thm: imbedding}, there exists some $\textbf{z}^*_{\textbf{x}}\in\textbf{B}_{\textbf{E}^*}$  such that $\textbf{z}^*(\textbf{x})=\Vert \textbf{x} \Vert$. Thereby, for each $\textbf{x}\in \textbf{C}$, we can consider the conditional set $\textbf{A}_\textbf{x}:=\left[\textbf{z}^*\in\textbf{B}_{\textbf{E}^*}\:;\:\textbf{z}^*(\textbf{x})=\Vert \textbf{x} \Vert\right]$, which is well defined and on $1$. Further, $\{\textbf{A}_\textbf{x}\}_{\textbf{x}\in\textbf{C}}$ is a conditional family of subsets of $\textbf{B}_{\textbf{E}^*}$. By the  conditional version of the axiom of choice (see \cite[Theorem 2.26]{key-7}), we can find a conditional family $\{\textbf{z}_{\textbf{x}}^*\}_{\textbf{x}\in\textbf{C}}$ with $\textbf{z}^*_\textbf{x}\in\textbf{A}_\textbf{x}$ for all $\textbf{x}\in\textbf{C}$.

Let us define the conditional subset $\textbf{D}:=\left[\textbf{z}_{\textbf{x}}^* \:;\: \textbf{x}\in \textbf{C}\right]$. 

Fix $\textbf{x}\in\textbf{E}$, $\textbf{x}\neq\textbf{0}$. By arguing on $\supp(\textbf{x})$, which is not null, we can suppose that $\textbf{x}\in\textbf{E}\sqcap[\textbf{0}]^\sqsubset$. Since $\textbf{C}$ is conditionally dense, there exists a conditional sequence $\{\textbf{x}_\textbf{n}\}$ in $\textbf{C}$ which conditionally converges to $\textbf{x}$, and thus $\nlim_{\textbf{n}} \textbf{z}^*_{\textbf{x}_\textbf{n}} (\textbf{x}) = \nlim_{\textbf{n}}[\textbf{z}^*_{\textbf{x}_\textbf{n}}(\textbf{x}-\textbf{x}_\textbf{n})+\textbf{z}^*_{\textbf{x}_\textbf{n}}(\textbf{x}_\textbf{n})]=\nlim_{\textbf{n}} \Vert \textbf{x}_\textbf{n} \Vert=\Vert\textbf{x}\Vert>\textbf{0}$. Necessarily, there exists $\textbf{n}_0\in\textbf{N}$ such that $\textbf{z}^*_{\textbf{x}_{\textbf{n}_0}} (\textbf{x})>\textbf{0}$. We conclude that $\textbf{D}$ is conditionally total.       
\end{proof}

\begin{lem}
\label{lem: weakBounded}
If $\textbf{E}$ is a conditionally normed space, and $\{\textbf{x}_\textbf{n}\}_{\textbf{n}\in\textbf{N}}$ is a conditional sequence which conditionally weakly converges to $\textbf{x}\in\textbf{E}$, then $\{\textbf{x}_\textbf{n}\}_{\textbf{n}\in\textbf{N}}$ is conditionally bounded. As a consequence, if $\textbf{K}$ is conditionally weakly sequentially compact, then $\textbf{K}$ is conditionally bounded. 
\end{lem}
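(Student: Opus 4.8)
The plan is to obtain conditional boundedness of a conditionally weakly convergent conditional sequence from the conditional Uniform Boundedness Principle (Theorem \ref{thm: uniformBoundeness}) applied inside the conditional Banach space $\textbf{E}^*$, mimicking the classical argument. First I would fix $\{\textbf{x}_\textbf{n}\}$ with $\nlim\textbf{x}_\textbf{n}=\textbf{x}$ for $\sigma(\textbf{E},\textbf{E}^*)$ and consider the conditionally countable subset $\textbf{S}:=\left[\textbf{j}(\textbf{x}_\textbf{n})\:;\:\textbf{n}\in\textbf{N}\right]$ of $\textbf{L}(\textbf{E}^*,\textbf{R})=\textbf{E}^{**}$ on $1$, which is well defined since $\textbf{n}\mapsto\textbf{j}(\textbf{x}_\textbf{n})$ is stable and $\textbf{j}$ is the conditional isometry of Theorem \ref{thm: imbedding}. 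For each fixed $\textbf{x}^*\in\textbf{E}^*$, conditional $\sigma(\textbf{E},\textbf{E}^*)$-continuity of $\textbf{x}^*$ gives that the conditional sequence $\textbf{j}(\textbf{x}_\textbf{n})(\textbf{x}^*)=\textbf{x}^*(\textbf{x}_\textbf{n})$ conditionally converges in $\textbf{R}$ to $\textbf{x}^*(\textbf{x})$; since a conditionally convergent conditional sequence in $\textbf{R}$ is conditionally bounded (combine a tail bound coming from the conditional limit with the conditional maximum over the conditionally finite initial segment), there is $\textbf{r}(\textbf{x}^*)\in\textbf{R}^+$ with $|\textbf{T}(\textbf{x}^*)|\leq\textbf{r}(\textbf{x}^*)$ for all $\textbf{T}\in\textbf{S}$. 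As $\textbf{R}$ is conditionally complete, $\textbf{E}^*$ is conditionally Banach, so Theorem \ref{thm: uniformBoundeness} yields $\textbf{s}\in\textbf{R}^{++}$ with $\Vert\textbf{T}\Vert\leq\textbf{s}$ for all $\textbf{T}\in\textbf{S}$; since $\textbf{j}$ is a conditional isometry this is exactly $\Vert\textbf{x}_\textbf{n}\Vert=\Vert\textbf{j}(\textbf{x}_\textbf{n})\Vert\leq\textbf{s}$ for every $\textbf{n}\in\textbf{N}$, i.e. $\{\textbf{x}_\textbf{n}\}$ is conditionally bounded.

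For the consequence I would run the classical ``extract an unbounded sequence'' argument, with care about the gluing. Assume $\textbf{K}$ is on $1$ (otherwise work inside $\textbf{E}|a$) and set $d:=\vee\left\{a\in\A\:;\:\textbf{K}|a\textnormal{ is conditionally bounded}\right\}$; concatenating bounding radii along a partition refining a family witnessing the supremum (well-ordering theorem) shows $d$ is attained. If $d=1$ we are done; otherwise, arguing by consistency on $d^c$, we may assume $d=0$, i.e. $\textbf{K}|a$ is conditionally unbounded for every $a\neq0$. Then for each $\textbf{n}\in\textbf{N}$ the conditional subset $\textbf{K}_\textbf{n}:=\left[\textbf{x}\in\textbf{K}\:;\:\Vert\textbf{x}\Vert\geq\textbf{n}\right]$ is on $1$: if it were on some $c<1$, then every $\textbf{x}\in\textbf{K}$ would satisfy $\Vert\textbf{x}\Vert<\textbf{n}$ on $c^c$, so $\textbf{K}|c^c$ would be conditionally bounded with $c^c\neq0$, contradicting $d=0$ (this is also what makes the set-builder legitimate). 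Since $\textbf{n}\mapsto\textbf{K}_\textbf{n}$ is stable, the conditional axiom of choice (\cite[Theorem 2.26]{key-7}) provides a conditional sequence $\{\textbf{x}_\textbf{n}\}$ in $\textbf{K}$ with $\Vert\textbf{x}_\textbf{n}\Vert\geq\textbf{n}$ for all $\textbf{n}$. By conditional weak sequential compactness of $\textbf{K}$, a conditional subsequence $\{\textbf{x}_{\textbf{n}_\textbf{k}}\}$ conditionally weakly converges, hence is conditionally bounded by the first part, say $\Vert\textbf{x}_{\textbf{n}_\textbf{k}}\Vert\leq\textbf{s}$ for all $\textbf{k}$; but $\Vert\textbf{x}_{\textbf{n}_\textbf{k}}\Vert\geq\textbf{n}_\textbf{k}\geq\textbf{k}$ forces $\textbf{s}\geq\textbf{k}$ for every $\textbf{k}\in\textbf{N}$, which is impossible because $\textbf{N}$ is not conditionally bounded above in $\textbf{R}$. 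This contradiction shows $d=1$, as desired.

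I expect the only genuinely delicate point to be the gluing bookkeeping in the second paragraph: checking that $d$ is attained and, more importantly, that the reduction to $d=0$ forces each $\textbf{K}_\textbf{n}$ onto $1$ and makes the formal set-builder $[\textbf{x}\in\textbf{K}\:;\:\Vert\textbf{x}\Vert\geq\textbf{n}]$ well posed. The first paragraph is essentially a transcription of the classical proof, its only non-formal ingredient being the auxiliary fact that conditionally convergent conditional sequences in $\textbf{R}$ are conditionally bounded, which follows from the conditional order structure of $\textbf{R}$ and the existence of conditional maxima of conditionally finite subsets.
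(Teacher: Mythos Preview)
Your proposal is correct and follows essentially the same route as the paper: embed the sequence into $\textbf{E}^{**}$ via $\textbf{j}$, get pointwise bounds on $\textbf{E}^*$ from conditional weak convergence (tail bound plus conditional maximum over the initial segment), apply the conditional Uniform Boundedness Principle on the conditional Banach space $\textbf{E}^*$, and transfer back by the isometry; for the second part, define the same supremum $d$, reduce to $d=0$, and produce a conditional sequence in $\textbf{K}$ with $\Vert\textbf{x}_\textbf{n}\Vert\geq\textbf{n}$ to reach a contradiction with the first part. The only cosmetic difference is that the paper builds the unbounded conditional sequence by choosing $\textbf{x}_n$ for each classical $n\in\N$ (via the attained $d_n:=\vee\{a:\exists\textbf{x}\in\textbf{K},\ \Vert\textbf{x}\Vert|a>\textbf{n}|a\}=1$) and then gluing, whereas you package the same step as an application of the conditional axiom of choice to the stable family $\textbf{n}\mapsto\textbf{K}_\textbf{n}$; both are legitimate and equivalent here.
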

\begin{proof}
For the first part, we consider the conditional sequence $\{\textbf{z}_\textbf{n}\}_{\textbf{n}\in\textbf{N}}$ with $\textbf{z}_\textbf{n}:=\textbf{j}(\textbf{x}_\textbf{n})$ for every $\textbf{n}\in\textbf{N}$, where $\textbf{j}$ is the natural conditional embedding. This is a conditional sequence in $\textbf{E}^{**}$.  Since $\{\textbf{x}_\textbf{n}\}$ conditionally converges to $\textbf{x}$, for fixed $\textbf{x}^*\in\textbf{E}^*$, there is $\textbf{n}_0\in \textbf{N}$ so that $|\textbf{x}^*(\textbf{x}_\textbf{n})|\leq \textbf{1}+ |\textbf{x}^*(\textbf{x})|$ whenever $\textbf{n}\geq \textbf{n}_0$. Then, for each $\textbf{n}\in\textbf{\textbf{N}}$ 
\[
|\textbf{z}_\textbf{n}(\textbf{x}^*)|=|\textbf{x}^*(\textbf{x}_\textbf{n})|\leq (\textbf{1} + |\textbf{x}^*(\textbf{x})|) \vee \nmax \{ |\textbf{x}^*(\textbf{x}_\textbf{n})|\:;\: \textbf{n}\leq \textbf{n}_0\}.
\]
 
Now, due to Theorem \ref{thm: uniformBoundeness}, we have that $\{\textbf{z}_\textbf{n}\}$ is conditionally bounded.  Consequently $\{\textbf{x}_\textbf{n}\}$ is conditionally bounded too, because $\textbf{j}$ is a conditional isometry. 

As for the second part, we can suppose without loss of generality that $\textbf{K}$ is on $1$. Define
\[
d:=\vee\left\{a\in\A\:;\: \textbf{K}|a \textnormal{ is conditionally bounded }  \right\}.
\] 
$d$ is attained. So if $d=1$ we are done. In other case, we can assume that $d=0$. 

If so, for each $n\in\N$ let us define
\[
d_n:=\vee\left\{a\in\A\:;\: \exists\textbf{x}\in\textbf{K}\textnormal{ with } \Vert\textbf{x}\Vert|a>\textbf{n}|a\right\}.
\]
Since $d=0$, necessarily $d_n=1$ and it is attained, so we can pick $\textbf{x}_n\in \textbf{K}$ such that $\Vert \textbf{x}_{n}\Vert>\textbf{n}$. For $\textbf{n}\in\textbf{N}$ with $n=\sum n_i|a_i\in{N}$, define $\textbf{x}_\textbf{n}:=x_n|1$ with $x_n:=\sum x_{n_i}|a_i$. Then $\{\textbf{x}_\textbf{n}\}$ is a conditional sequence with $\Vert\textbf{x}_\textbf{n}\Vert>\textbf{n}$ for each $\textbf{n}\in\textbf{N}$. Due to the first part of the theorem, we find that the conditional sequence $\{\textbf{x}_\textbf{n}\}$ cannot have a conditionally weakly convergent subsequence. 
\end{proof}

\begin{lem}
\label{lem: metrizable}
Let $\textbf{E}$ be a conditionally normed space. If $\textbf{K}\sqsubset \textbf{E}$ is conditionally weakly compact and there is a conditionally countable set $\textbf{D}\sqsubset \textbf{E}^*$ which is conditionally total, and such that $\Vert \textbf{x}^*\Vert>\textbf{0}$ for all $\textbf{x}^*\in\textbf{D}$, then $\textbf{K}$ is conditionally metrizable.

\end{lem}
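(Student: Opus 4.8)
The plan is to reproduce the classical argument within the conditional framework: out of the conditionally countable total family we manufacture an explicit conditional metric on $\textbf{K}$, then show it induces exactly the conditional weak topology, upgrading a conditionally continuous conditional bijection to a conditional homeomorphism by means of conditional compactness. We may assume $\textbf{K}$ is on $1$ (otherwise pass to $\supp(\textbf{K})$, replacing $\textbf{E},\textbf{D}$ by their traces). Since $\textbf{K}$ is conditionally weakly compact, Lemma \ref{lem: L0bounded} furnishes $\textbf{M}\in\textbf{R}^{++}$ with $\Vert\textbf{x}\Vert\leq\textbf{M}$ for every $\textbf{x}\in\textbf{K}$. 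Enumerate $\textbf{D}$ as a conditional sequence $\{\textbf{x}^*_\textbf{n}\}_{\textbf{n}\in\textbf{N}}$ (repetitions allowed); replacing $\textbf{x}^*_\textbf{n}$ by $\textbf{x}^*_\textbf{n}/\Vert\textbf{x}^*_\textbf{n}\Vert$ --- a stable operation, legitimate because $\Vert\textbf{x}^*_\textbf{n}\Vert>\textbf{0}$, which preserves conditional countability and conditional totality --- we may also assume $\Vert\textbf{x}^*_\textbf{n}\Vert=\textbf{1}$ for all $\textbf{n}\in\textbf{N}$.

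Now define $\textbf{d}:\textbf{K}\Join\textbf{K}\rightarrow\textbf{R}^+$ by
\[
\textbf{d}(\textbf{x},\textbf{y}):=\sum_{\textbf{n}\geq\textbf{1}}\textbf{2}^{-\textbf{n}}\,|\textbf{x}^*_\textbf{n}(\textbf{x}-\textbf{y})|.
\]
Its conditional partial sums are conditionally nondecreasing and conditionally bounded above by $\textbf{2}\textbf{M}$, so by conditional Dedekind completeness of $\textbf{R}$ the conditional series conditionally converges and $\textbf{d}$ is well defined. Symmetry and the conditional triangle inequality are inherited term by term from $|\cdot|$ on $\textbf{R}$. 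For the remaining axiom, suppose $\textbf{d}(\textbf{x},\textbf{y})=\textbf{0}$; since the conditional partial sums are conditionally nonnegative, conditionally nondecreasing and conditionally converge to $\textbf{0}$, each of them must be $\textbf{0}$, whence $\textbf{x}^*_\textbf{n}(\textbf{x}-\textbf{y})=\textbf{0}$ for every $\textbf{n}\in\textbf{N}$. By stability (concatenation commutes with evaluation of conditionally linear functionals), $\textbf{x}^*(\textbf{x}-\textbf{y})=\textbf{0}$ for all $\textbf{x}^*\in\textbf{D}$, and conditional totality of $\textbf{D}$ forces $\textbf{x}=\textbf{y}$. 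Thus $\textbf{d}$ is a conditional metric on $\textbf{K}$; write $\mathcal{T}_\textbf{d}$ for the conditional topology it induces (conditionally Hausdorff) and $\mathcal{T}_w$ for the conditional weak topology restricted to $\textbf{K}$.

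It remains to check $\mathcal{T}_\textbf{d}=\mathcal{T}_w$. The identity $\textbf{i}:(\textbf{K},\mathcal{T}_w)\rightarrow(\textbf{K},\mathcal{T}_\textbf{d})$ is conditionally continuous: given $\textbf{x}_0\in\textbf{K}$ and $\textbf{r}\in\textbf{R}^{++}$, using that $\sum_{\textbf{n}\geq\textbf{1}}\textbf{2}^{-\textbf{n}}$ conditionally converges pick $\textbf{N}_0\in\textbf{N}$ with $\textbf{2}\textbf{M}\sum_{\textbf{n}>\textbf{N}_0}\textbf{2}^{-\textbf{n}}<\textbf{r}/\textbf{2}$; then the conditional weak neighborhood $\textbf{x}_0+\textbf{U}_{\textbf{Q},\textbf{r}/\textbf{2}}$, with $\textbf{Q}:=[\textbf{p}_{\textbf{x}^*_\textbf{n}}\:;\:\textbf{1}\leq\textbf{n}\leq\textbf{N}_0]$, is conditionally contained in the conditional $\textbf{d}$-ball of radius $\textbf{r}$ about $\textbf{x}_0$, since on it the head of the defining series is $\leq\textbf{r}/\textbf{2}$ and the tail is $<\textbf{r}/\textbf{2}$ (using $\Vert\textbf{x}-\textbf{x}_0\Vert\leq\textbf{2}\textbf{M}$). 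Now $(\textbf{K},\mathcal{T}_w)$ is conditionally compact and $(\textbf{K},\mathcal{T}_\textbf{d})$ is conditionally Hausdorff, so the conditionally continuous conditional bijection $\textbf{i}$ is a conditional homeomorphism: a conditionally closed subset of $(\textbf{K},\mathcal{T}_w)$ is conditionally compact, its image under $\textbf{i}$ is conditionally compact in $(\textbf{K},\mathcal{T}_\textbf{d})$ (\cite[Proposition 3.26]{key-7}), hence conditionally closed because $\mathcal{T}_\textbf{d}$ is conditionally Hausdorff; so $\textbf{i}$ maps conditionally closed sets to conditionally closed sets, i.e. $\textbf{i}^{-1}$ is conditionally continuous, and $\textbf{i}$ is conditionally open. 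Therefore the conditional weak topology on $\textbf{K}$ coincides with the one induced by the conditional metric $\textbf{d}$, i.e. $\textbf{K}$ is conditionally metrizable.

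I expect the main obstacle to be this last step: justifying in the conditional setting that a conditionally continuous conditional bijection from a conditionally compact space onto a conditionally Hausdorff space is a conditional homeomorphism, which rests on the conditional versions of ``a conditionally closed subset of a conditionally compact space is conditionally compact'' and ``a conditionally compact subset of a conditionally Hausdorff space is conditionally closed''; secondarily, the stability bookkeeping --- convergence of the conditional series via conditional Dedekind completeness, the conditional tail estimate, and the passage from the generating sequence $\{\textbf{x}^*_\textbf{n}\}$ to all of $\textbf{D}$ through stability in the separation step --- must be handled with some care.
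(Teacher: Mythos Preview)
Your proof is correct and follows essentially the same route as the paper: define the conditional metric $\textbf{d}(\textbf{x},\textbf{y})=\sum_{\textbf{n}\geq\textbf{1}}\textbf{2}^{-\textbf{n}}|\textbf{x}^*_\textbf{n}(\textbf{x}-\textbf{y})|/\Vert\textbf{x}^*_\textbf{n}\Vert$ (you normalize first, the paper divides inside the sum---cosmetically the same), use conditional totality to verify it is a metric, invoke Lemma~\ref{lem: L0bounded} for conditional boundedness, split the series into head and tail to establish conditional continuity of the identity $(\textbf{K},\mathcal{T}_w)\to(\textbf{K},\mathcal{T}_\textbf{d})$, and then upgrade to a conditional homeomorphism via conditional compactness and \cite[Proposition~3.26]{key-7}. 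Your final step---conditionally closed in conditionally compact is conditionally compact, conditionally compact in conditionally Hausdorff is conditionally closed---is exactly what the paper uses (phrased there as ``$\textbf{U}^{\sqsubset}\sqcap\textbf{K}$ is conditionally compact in $(\textbf{K},\textbf{d})$, hence $\textbf{U}$ is conditionally open''), so the obstacle you flag is one the paper also passes over without further comment.
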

\begin{proof}

Since $\textbf{D}$ is conditionally countable, we can choose a conditional sequence $\{\textbf{x}^*_\textbf{n}\}$ with $\textbf{D}=\left[\textbf{x}^*_\textbf{n} \:;\: \textbf{n}\in\textbf{N}\right]$. We will construct a conditional metric on $\textbf{K}$. Indeed, we define the conditional function $\textbf{d}: \textbf{E}\Join \textbf{E}\rightarrow \textbf{R}$ given by 
\[
\begin{array}{cc}
\textbf{d}(\textbf{x},\textbf{y}):=\sum_{\textbf{n}\geq\textbf{1}} \frac{\textbf{1}}{\textbf{2}^\textbf{n}}\frac{|\textbf{x}^*_\textbf{n}(\textbf{x}-\textbf{y})|}{\Vert \textbf{x}^*_\textbf{n}\Vert} & \textnormal{for }(\textbf{x},\textbf{y})\in \textbf{E}\Join \textbf{E}.
\end{array}
\]
By using that $\textbf{K}$ is conditionally total, it can be verified that $\textbf{d}$ is a conditional metric on $\textbf{E}$. Besides, due to Lemma \ref{lem: L0bounded}, we have that $\textbf{K}$ is conditionally bounded. Now, we consider the identity $\textbf{Id}:\textbf{K}\rightarrow\textbf{K}$ as a conditional function between the conditional metric spaces $(\textbf{K},\sigma(\textbf{E},\textbf{E}^*)|_\textbf{K})$ and $(\textbf{K},\textbf{d})$. Then $\textbf{Id}$ is conditionally continuous. 
Indeed, let $\{\textbf{x}_\textbf{i}\}_{\textbf{i}\in\textbf{I}}$ be a conditional net in $\textbf{K}$ which conditionally converges  to some $\textbf{x}\in \textbf{K}$ with respect to $\sigma(\textbf{E},\textbf{E}^*)|_\textbf{K}$. For arbitrary $\textbf{r}\in\textbf{R}^{++}$, due to the conditional boundedness of $\textbf{K}$, there exists $\textbf{m}\in \textbf{N}$, $\textbf{m}>\textbf{1}$, and $\textbf{r}\in\textbf{R}^{++}$ so that
\[
\sum_{\textbf{n}\geq \textbf{m}} \frac{\textbf{1}}{\textbf{2}^\textbf{n}}\frac{|\textbf{x}^*_\textbf{n}(\textbf{x}_\textbf{i} - \textbf{x})|}{\Vert \textbf{x}^*_\textbf{n}\Vert}\leq \frac{\textbf{1}}{\textbf{2}^\textbf{n}} \nsup\left[\Vert\textbf{x}\Vert\:;\: \textbf{x}\in\textbf{K}\right]  \leq \textbf{r}/\textbf{2},  
\] 
for all $\textbf{i}\in \textbf{I}$. Since $\{\textbf{x}_\textbf{i}\}$ conditionally converges to $\textbf{x}$, we can also choose $\textbf{i}_0\in\textbf{I}$ such that   
\[
\sum_{\textbf{1}\leq \textbf{n}\leq\textbf{m}-1} \frac{\textbf{1}}{\textbf{2}^\textbf{n}}\frac{|\textbf{x}^*_\textbf{n}(\textbf{x}_\textbf{i} - \textbf{x})|}{\Vert \textbf{x}^*_\textbf{n}\Vert}\leq \textbf{r}/\textbf{2}, 
\]
for all $\textbf{i}\geq \textbf{i}_0$. It follows that $\textbf{d}(\textbf{x}_\textbf{i},\textbf{x})\leq \textbf{r}$ whenever $\textbf{i}\geq \textbf{i}_0$.

Finally, let us show that $\textbf{Id}$ is a conditionally open function.

Let $\textbf{U}\sqsubset\textbf{K}$ be conditionally $\sigma(\textbf{E},\textbf{E}^*)|_\textbf{K}$-open. Since $\textbf{K}$ is conditionally $\sigma(\textbf{E},\textbf{E}^*)$-compact, it follows that $\textbf{U}^{\sqsubset}\sqcap\textbf{K}$ is conditionally $\sigma(\textbf{E},\textbf{E}^*)|_\textbf{K}$-compact. Then, since $\textbf{Id}$ is conditionally continuous, in view of \cite[Proposition 3.26]{key-7}, $\textbf{U}^{\sqsubset}\sqcap\textbf{K}$ is conditionally compact in $(\textbf{K},\textbf{d})$, hence $\textbf{U}$ is conditionally open in $(\textbf{K},\textbf{d})$.
\end{proof}

Let us turn to prove Theorem \ref{thm: EberleinSmulian}.
\begin{proof}

Suppose that $\textbf{K}$ is a conditionally weakly compact subset of $\textbf{E}$ on $1$, and let $\{\textbf{z}_\textbf{n}\}_{\textbf{n}\in \textbf{N}}$ be a conditional sequence in $\textbf{K}$. Let us put $\textbf{K}_0:=\overline{\spa_{\textbf{R}}[\textbf{z}_\textbf{n}\:;\:\textbf{n}\in \textbf{N}]}^{\sigma(\textbf{E},\textbf{E}^*)}$. Then $\textbf{K}_0$ is conditionally weakly closed in $\textbf{E}$ and, consequently, $\textbf{K}_0\sqcap \textbf{K}$ is  conditionally $\sigma(\textbf{E},\textbf{E}^*)$-compact in the conditionally complete normed space $\textbf{K}_0$. Such space is conditionally separable, because it conditionally contains the conditionally countable and dense subset $\spa_{\textbf{Q}}[\textbf{z}_\textbf{n}\:;\:\textbf{n}\in \textbf{N}]$. We can thus apply Lemma \ref{lem: totalSet}; that is, $\textbf{K}_0$ conditionally contains a conditionally countable total set.

From Lemma \ref{lem: metrizable} we know that $\textbf{K}_0\sqcap \textbf{K}$ is conditionally metrizable with the conditional topology $\sigma(\textbf{K}_0,\textbf{E}^*)$. It is known that conditional compactness and conditional sequential compactness are equivalent in a conditional metric space (see \cite[Theorem 4.6]{key-7}). Then $\textbf{K}_0\sqcap \textbf{K}$ is conditionally sequentially $\sigma(\textbf{K}_0,\textbf{E}^*)$-compact. In particular, if we choose a conditional subsequence $\{\textbf{z}_{\textbf{n}_\textbf{m}}\}_{\textbf{m}\in\textbf{N}}$ which conditionally converges in $\sigma(\textbf{K}_0,\textbf{E}^*)$ to $\textbf{z}$, then it also conditionally  converges in $\sigma(\textbf{E},\textbf{E}^*)$ to $\textbf{z}$. 

We now turn to the converse. Let us first make an observation: if $\textbf{F}$ is a conditionally finitely generated vector subspace of $\textbf{E}^{**}$, then there exists a conditionally finite subset $[\textbf{z}^*_\textbf{n}\:;\: \textbf{n}\leq \textbf{m}]$ of $\textbf{S}_{\textbf{E}^*}$ such that for any $\textbf{x}^{**}\in \textbf{F}$
\begin{equation}
\label{ineq0}
\frac{\Vert \textbf{x}^{**}\Vert}{\textbf{2}}\leq \nmax \left[ |\textbf{x}^{**}(\textbf{z}^*_\textbf{n})| \:;\: \textbf{n}\leq \textbf{m}\right].
\end{equation}

Indeed, since $\textbf{S}_\textbf{F}$ is conditionally $\Vert\cdot\Vert$-compact in $\textbf{F}$ (Proposition \ref{prop: Heine-Borel}), there exists a conditional finite subset $[\textbf{z}^{**}_\textbf{n}\:;\: \textbf{n}\leq \textbf{m}]$ of $\textbf{S}_\textbf{F}$ such that
\begin{equation}
\label{eq: sphere}
\textbf{S}_\textbf{F}\sqsubset\underset{\textbf{1}\leq\textbf{n}\leq \textbf{m}} \sqcup\textbf{B}_{\textbf{1}/\textbf{4}}(\textbf{z}^{**}_\textbf{n}).
\end{equation}

For each $n\in\N$, let us put $a_n:=\vee\left\{b\in\A\:;\:\textbf{n}|b\leq\textbf{m}|b\right\}$, and define $l=n|a_n + 1|a_n^c$. Since $\Vert \textbf{z}^{**}_\textbf{l}\Vert=\textbf{1}$, we can choose $\textbf{z}^*_n \in \textbf{S}_{\textbf{E}^*}$ so that $\textbf{z}^{**}_n (\textbf{z}^*_\textbf{l})\geq \frac{\textbf{3}}{\textbf{4}}$. Now, for $\textbf{n}\in\textbf{N}$ with $\textbf{n}\leq\textbf{m}$, where $n=\sum n_i|a_i$, we define $\textbf{z}^*_\textbf{n}:=z^*_n|1$ with $z^*_n:=\sum z^*_{n_i}|a_i$. Then, the conditional family $[\textbf{z}^{*}_\textbf{n} \:;\:  \textbf{1}\leq\textbf{n}\leq \textbf{m}]$ satisfies the condition $\textbf{z}^{**}_\textbf{n} (\textbf{z}^*_\textbf{n})\geq \frac{\textbf{3}}{\textbf{4}}$.

Now, in view of (\ref{eq: sphere}), we have that for each $\textbf{x}^{**}\in\textbf{S}_\textbf{F}$ there is $\textbf{n}\leq\textbf{m}$ such that
\begin{equation}
\label{ineq}
\textbf{x}^{**}(\textbf{z}^*_\textbf{n})= \textbf{z}^{**}_\textbf{n}(\textbf{z}^*_\textbf{n}) - (\textbf{z}^{**}_\textbf{n}-\textbf{x}^{**})(\textbf{z}^*_\textbf{n})  \geq  \frac{\textbf{1}}{\textbf{2}}. 
\end{equation}  

For given $\textbf{x}^{**}\in\textbf{F}$, put $a:=\supp(\textbf{x}^{**})$. Then $\textbf{x}^{**}{\Vert \textbf{x}^{**}\Vert}^{-1}|a\in\textbf{S}_\textbf{F}|a$, and due to (\ref{ineq}), we obtain that $\textbf{x}^{**}(\textbf{z}^*_\textbf{n})|a\geq\Vert \textbf{x}^{**}\Vert/\textbf{2}|a$ for some $\textbf{n}\leq \textbf{m}$. Therefore, (\ref{ineq0}) follows.

Let $\textbf{K}\sqsubset\textbf{E}$ be on $1$ and conditionally weakly sequentially compact. Consider $\overline{\textbf{j}(\textbf{K})}^{\sigma(\textbf{E}^{**},\textbf{E}^*)}$, where $\textbf{j}:\textbf{E}\rightarrow\textbf{E}^{**}$ is the natural conditional embedding. By Lemma \ref{lem: weakBounded}, $\textbf{K}$ is conditionally bounded, and therefore $\overline{\textbf{j}(\textbf{K})}^{\sigma(\textbf{E}^{**},\textbf{E}^*)}$ is conditionally bounded as well.

We will show $\overline{\textbf{j}(\textbf{K})}^{\sigma(\textbf{E}^{**},\textbf{E}^*)}$ actually lies in $\textbf{j}(\textbf{E})$, which will yield that $\textbf{K}$ is conditionally weakly compact, in view of Lemma \ref{lem: WSClosure}.

  Indeed, let us take $\textbf{x}^{**}\in \overline{\textbf{j}(\textbf{K})}^{\sigma(\textbf{E}^{**},\textbf{E}^*)}$, and choose $\textbf{x}_1^*\in \textbf{S}_{\textbf{E}^*}$. Since $\textbf{x}^{**}\in \overline{\textbf{j}(\textbf{K})}^{\sigma(\textbf{E}^{**},\textbf{E}^*)}$ we have that there exists $\textbf{z}_1 \in \textbf{K}$ such that 
	\[
	|(\textbf{x}^{**}-\textbf{j}(\textbf{z}_1))(\textbf{x}^*_1)|\leq \textbf{1}.
	\]
	
	$\spa_{\textbf{R}}[\textbf{x}^{**},\textbf{x}^{**}-\textbf{j}(\textbf{z}_1)]$ is a conditionally finitely generated vector subspace of $\textbf{E}^{**}$. The previous observation provides us with a conditionally finite subset $[\textbf{x}^*_\textbf{n}\:;\:\textbf{1}< \textbf{n}\leq \textbf{n}_2]\sqsubset\textbf{S}_{\textbf{E}^*}$ such that for any $\textbf{y}^{**}\in\spa_{\textbf{R}}[\textbf{x}^{**},\textbf{x}^{**}-\textbf{j}(\textbf{z}_1)]$,
\[
\frac{\Vert \textbf{y}^{**} \Vert}{\textbf{2}} \leq \nmax \left[ |\textbf{y}^{**}(\textbf{x}_\textbf{n})| \:;\: \textbf{1}< \textbf{n}\leq \textbf{n}_2\right] \leq \nmax \left[ |\textbf{y}^{**}(\textbf{x}_\textbf{n})| \:;\: \textbf{1}\leq \textbf{n}\leq \textbf{n}_2\right].
\]		

Since $\textbf{x}^{**}\in \overline{\textbf{j}(\textbf{K})}^{\sigma(\textbf{E}^{**},\textbf{E}^*)}$, we can choose $\textbf{z}_2\in\textbf{K}$ such that  
\[
\begin{array}{cc}
|(\textbf{x}^{**}-\textbf{j}(\textbf{z}_2))(\textbf{x}^*_\textbf{n})| \leq \textbf{1}/\textbf{2} & \textnormal{ for }\textbf{1}\leq \textbf{n}\leq \textbf{n}_2.
\end{array}
\]
Now, move on to $\spa_{\textbf{R}}[\textbf{x}^{**},\textbf{x}^{**}-\textbf{j}(\textbf{z}_1),\textbf{x}^{**}-\textbf{j}(\textbf{z}_2)]$. Being it a conditionally finitely generated vector subspace,  we can find $[\textbf{x}^*_\textbf{n} \:;\: \textbf{n}_2<\textbf{n}\leq \textbf{n}_3]\sqsubset \textbf{S}_{\textbf{E}^*}$ such that 
\[
\frac{\Vert \textbf{y}^{**} \Vert}{\textbf{2}} \leq \nmax \left[ |\textbf{y}^{**}(\textbf{x}_\textbf{n})| \:;\: \textbf{n}_2< \textbf{n}\leq \textbf{n}_3 \right] \leq \nmax \left[ |\textbf{y}^{**}(\textbf{x}_\textbf{n})| \:;\: \textbf{1}\leq \textbf{n}\leq \textbf{n}_3 \right]
\]	 	
for any $\textbf{y}^{**}\in \spa_{\textbf{R}}[\textbf{x}^{**},\textbf{x}^{**}-\textbf{j}(\textbf{z}_1),\textbf{x}^{**}-\textbf{j}(\textbf{z}_2)]$.

By recurrence, we obtain two sequences, namely $\textbf{n}_1<\textbf{n}_2,...$ and $\textbf{z}_1,\textbf{z}_2,...$, and a conditional sequence $\{\textbf{x}^*_\textbf{n}\}$ such that for each $k\in\N$ it holds that
\[
\begin{array}{cc}
|(\textbf{x}^{**}-\textbf{j}(\textbf{z}_k))(\textbf{x}^*_\textbf{n})| \leq \textbf{1}/\textbf{k} & \textnormal{ for }\textbf{1}\leq \textbf{n}\leq \textbf{n}_k,
\end{array}
\]
\[
\frac{\Vert \textbf{y}^{**} \Vert}{\textbf{2}} \leq \max \left[ |\textbf{y}^{**}(\textbf{x}_\textbf{n})| \:;\: \textbf{1}\leq \textbf{n}\leq \textbf{n}_k \right].
\]
for any $\textbf{y}^{**}\in \spa_{\textbf{R}}[\textbf{x}^{**},\textbf{x}^{**}-\textbf{j}(\textbf{z}_1),...,\textbf{x}^{**}-\textbf{j}(\textbf{z}_k)]$.

Now, for $\textbf{n}\in\textbf{N}$ with $n:=\sum n_i|a_i$ define $z_n:=\sum z_{n_i}|a_i$. Then $\{\textbf{z}_\textbf{n}\}_{\textbf{n}\in\textbf{N}}$ is a conditional sequence in $\textbf{K}$. Therefore it can be verified that
\begin{equation}
\label{eq I}
\frac{\Vert \textbf{y}^{**} \Vert}{\textbf{2}} \leq \nsup \left[ |\textbf{y}^{**}(\textbf{x}^*_\textbf{n})| \:;\:\textbf{n}\in\textbf{N}\right]
\end{equation}       
for any $\textbf{y}^{**}\in\spa_{\textbf{R}}[\textbf{x}^{**},\textbf{x}^{**}-\textbf{j}(\textbf{z}_1),\textbf{x}^{**}-\textbf{j}(\textbf{z}_2),...]$.

Further, for $\textbf{k}\in\textbf{N}$ with $k:=\sum k_j|b_j\in\textbf{N}$, define $n_k:=\sum n_{k_j}|b_j\in{N}$. Then, for each $\textbf{k}\in\textbf{N}$, it holds

\begin{equation}
\label{eq II}
\begin{array}{cc}
|(\textbf{x}^{**}-\textbf{j}(\textbf{z}_\textbf{k}))(\textbf{x}_\textbf{n}^*)|\leq {\textbf{k}}^{-1}&\textnormal{ for }\textbf{1}\leq\textbf{n}\leq\textbf{n}_\textbf{k}.
\end{array}
\end{equation}  

On the other hand, given that $\textbf{K}$ is conditionally weakly sequentially compact, we have a conditional cluster point $\textbf{x}$ of $\{\textbf{z}_\textbf{k}\}$. It follows that $\textbf{x}\in \overline{\spa_{\textbf{R}}[\textbf{z}_\textbf{k}\:;\: \textbf{k}\in\textbf{N}]}^{\sigma(\textbf{E},\textbf{E}^*)}=\overline{\spa_{\textbf{R}}[\textbf{z}_\textbf{k}\:;\: \textbf{k}\in\textbf{N}]}^{\Vert\cdot\Vert}$, and therefore $\textbf{x}^{**} -\textbf{j}(\textbf{x})\in\overline{\spa_{\textbf{R}}[\textbf{x}^{**},\textbf{x}^{**}-\textbf{j}(\textbf{z}_1),\textbf{x}^{**}-\textbf{j}(\textbf{z}_2),...]}^{\Vert\cdot\Vert}$. 

Now, we will show that (\ref{eq I}) extends to any
\[
\textbf{y}^{**}\in\overline{\spa_{\textbf{R}}[\textbf{x}^{**},\textbf{x}^{**}-\textbf{j}(\textbf{z}_1),\textbf{x}^{**}-\textbf{j}(\textbf{z}_2),... ]}^{\Vert\cdot\Vert}.
\]

Indeed, let us put $\textbf{A}:=\spa_{\textbf{R}}[\textbf{x}^{**},\textbf{x}^{**}-\textbf{j}(\textbf{z}_1),\textbf{x}^{**}-\textbf{j}(\textbf{z}_2),... ]$, and let us suppose that we can find $\textbf{y}^{**}\in\overline{\textbf{A}}^{\Vert\cdot\Vert}$ so that 
\[
\frac{\Vert\textbf{y}^{**}\Vert}{\textbf{2}}|a> \nsup\left[|\textbf{y}^{**}(\textbf{x}^*_\textbf{n})|\:;\:\textbf{n}\in\textbf{N}\right]|a
\]
for some $a\in\A$, $a\neq 0$. We can assume w.l.g. $a=1$. Then, let us choose $\textbf{r}\in\textbf{R}$ such that
\[
\frac{\Vert\textbf{y}^{**}\Vert}{\textbf{2}}>\textbf{r}>\nsup\left[|\textbf{y}^{**}(\textbf{x}^*_\textbf{n})|\:;\:\textbf{n}\in\textbf{N}\right].
\]
We can find a conditional sequence $\{\textbf{y}_\textbf{k}^{**}\}$ in $\textbf{A}$, which conditionally $\Vert\cdot\Vert$-converges to $\textbf{y}^{**}$.  Let us choose $\textbf{k}\in\textbf{N}$ such that 
\[
\Vert\textbf{y}^{**}-\textbf{y}^{**}_\textbf{k}\Vert<(\textbf{2}\textbf{r}-\Vert\textbf{y}^{**}\Vert)\wedge\left(\textbf{r}-\nsup\left[|\textbf{y}^{**}(\textbf{x}^*_\textbf{n})|\:;\:\textbf{n}\in\textbf{N}\right]\right). 
\]
Then, it is not difficult to show that  $|\textbf{y}^{**}_\textbf{k}(\textbf{x}_\textbf{n})|<\textbf{r}<\frac{\Vert\textbf{y}^{**}_\textbf{k}\Vert}{\textbf{2}}$  for any $\textbf{n}\in\textbf{N}$. But this is a contradiction, in view of (\ref{eq I}).

Since (\ref{eq I}) is true for every $\textbf{y}^{**}\in\overline{\textbf{A}}^{\Vert\cdot\Vert}$, in particular it holds for $\textbf{x}^{**}-\textbf{j}(\textbf{x})$, i.e. 
\begin{equation}
\label{eqIII}
\frac{\textbf{1}}{\textbf{2}}\Vert \textbf{x}^{**}-\textbf{j}(\textbf{x})\Vert\leq \nsup\left[|\textbf{x}^{**}(\textbf{x}^*_\textbf{n})-\textbf{x}^*_\textbf{n}(\textbf{x})|\:;\:\textbf{n}\in\textbf{N}\right]. 
\end{equation}

For given $\textbf{p}\leq \textbf{k}$ and $\textbf{n}\leq \textbf{n}_\textbf{p}$, and due to (\ref{eq II}), we have that  
\[
|\textbf{x}^{**}(\textbf{x}^*_\textbf{n})-\textbf{x}^*_\textbf{n}(\textbf{x})| \leq|(\textbf{x}^{**}-\textbf{j}(\textbf{z}_\textbf{k}))(\textbf{x}^*_\textbf{n})|+|\textbf{x}^*_\textbf{n}(\textbf{z}_\textbf{k}) - \textbf{x}^*_\textbf{n}(\textbf{x})| \leq {\textbf{p}}^{-1} +|\textbf{x}^*_\textbf{n}(\textbf{z}_\textbf{k}) - \textbf{x}^*_\textbf{n}(\textbf{x})|.
\] 
Since $\textbf{x}$ is a conditional $\sigma(\textbf{E},\textbf{E}^*)$-cluster point of $\{\textbf{z}_\textbf{k}\}$, we can choose $\textbf{k}\geq \textbf{p}$ so that $|\textbf{x}^*_\textbf{n}(\textbf{z}_\textbf{k}) - \textbf{x}^*_\textbf{n}(\textbf{x})|\leq\frac{\textbf{1}}{\textbf{p}}$. Since $\textbf{p}\in\textbf{N}$ is arbitrary, we obtain that $|\textbf{x}^{**}(\textbf{x}^*_\textbf{n})-\textbf{x}^*_\textbf{n}(\textbf{x})|=\textbf{0}$ for all $\textbf{n}\in\textbf{N}$, and in view of (\ref{eqIII}), we conclude that  $\textbf{x}^{**}=\textbf{j}(\textbf{x})$, which completes the proof.
\end{proof}
  
Now, let us turn to prove Theorem \ref{thm: AmirLindenstrauss}. We need some preliminary results:

\begin{lem}
\label{lem: denseMatch}
Let $\textbf{E}$ be conditionally Banach. Suppose that $\textbf{D}\sqsubset\textbf{E}^*$ is conditionally dense. If $\textbf{K}\sqsubset\textbf{E}$ is conditionally bounded, then $\sigma(\textbf{E},\textbf{E}^*)$ and $\sigma(\textbf{E},\textbf{D})$ agree on $\textbf{K}$.  
\end{lem}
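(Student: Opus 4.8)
The plan is to transcribe the classical argument: the topology $\sigma(\textbf{E},\textbf{D})$ is automatically conditionally coarser than $\sigma(\textbf{E},\textbf{E}^*)$, and conditional boundedness of $\textbf{K}$ is precisely what allows conditional density of $\textbf{D}$ to be upgraded to a \emph{uniform} approximation of functionals on $\textbf{K}$, forcing the two traces to coincide. First I would record that, being conditionally dense, $\textbf{D}$ is on $1$ (as $\overline{\textbf{D}}=\textbf{E}^*$ and $\supp(\textbf{E}^*)=1$), so the conditional family of conditional seminorms $[\textbf{p}_{\textbf{x}^*}\:;\:\textbf{x}^*\in\textbf{D}]$ inducing $\sigma(\textbf{E},\textbf{D})$ is a conditional subfamily of the one inducing $\sigma(\textbf{E},\textbf{E}^*)$; hence every conditional basic $\sigma(\textbf{E},\textbf{D})$-neighborhood is also a conditional $\sigma(\textbf{E},\textbf{E}^*)$-neighborhood, which gives $\sigma(\textbf{E},\textbf{D})|_\textbf{K}\sqsubset\sigma(\textbf{E},\textbf{E}^*)|_\textbf{K}$.

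For the reverse inclusion I would first reduce to $\textbf{K}$ on $1$ by consistently arguing on its support, and fix (using conditional boundedness, and enlarging it if necessary) an $\textbf{M}\in\textbf{R}^{++}$ with $\Vert\textbf{x}\Vert\leq\textbf{M}$ for all $\textbf{x}\in\textbf{K}$, so that $\Vert\textbf{x}-\textbf{y}\Vert\leq\textbf{2}\textbf{M}$ on $\textbf{K}$. Since $\sigma(\textbf{E},\textbf{E}^*)$ is by construction the conditionally coarsest conditional topology making each $\textbf{x}^*\in\textbf{E}^*$ conditionally continuous, its trace on $\textbf{K}$ is routinely the conditionally coarsest topology on $\textbf{K}$ making each restriction $\textbf{x}^*|_\textbf{K}$ conditionally continuous; thus it suffices to check that each $\textbf{x}^*|_\textbf{K}$ is conditionally $\sigma(\textbf{E},\textbf{D})|_\textbf{K}$-continuous. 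I would verify this with conditional nets, as continuity is tested elsewhere in the paper: let $\{\textbf{x}_\textbf{i}\}_{\textbf{i}\in\textbf{I}}$ be a conditional net in $\textbf{K}$ conditionally $\sigma(\textbf{E},\textbf{D})$-converging to $\textbf{x}_0\in\textbf{K}$, fix $\textbf{r}\in\textbf{R}^{++}$, use conditional density of $\textbf{D}$ (each $\textbf{x}^*\in\textbf{E}^*$ is the conditional limit of a conditional sequence in $\textbf{D}$) to pick $\textbf{y}^*\in\textbf{D}$ with $\Vert\textbf{x}^*-\textbf{y}^*\Vert\leq\frac{\textbf{r}}{\textbf{4}\textbf{M}}$, and then $\textbf{i}_0\in\textbf{I}$ with $|\textbf{y}^*(\textbf{x}_\textbf{i}-\textbf{x}_0)|\leq\textbf{r}/\textbf{2}$ for all $\textbf{i}\geq\textbf{i}_0$; the conditional triangle inequality gives, for such $\textbf{i}$,
\[
|\textbf{x}^*(\textbf{x}_\textbf{i}-\textbf{x}_0)|\leq\Vert\textbf{x}^*-\textbf{y}^*\Vert\,\Vert\textbf{x}_\textbf{i}-\textbf{x}_0\Vert+|\textbf{y}^*(\textbf{x}_\textbf{i}-\textbf{x}_0)|\leq\frac{\textbf{r}}{\textbf{4}\textbf{M}}\,\textbf{2}\textbf{M}+\frac{\textbf{r}}{\textbf{2}}=\textbf{r},
\]
so $\nlim\textbf{x}^*(\textbf{x}_\textbf{i})=\textbf{x}^*(\textbf{x}_0)$, as required. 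Alternatively one can argue directly at the level of neighborhoods: given a conditional $\sigma(\textbf{E},\textbf{E}^*)$-open $\textbf{O}$ and a conditional element $\textbf{x}_0\in\textbf{O}\sqcap\textbf{K}$, take a basic neighborhood $\textbf{x}_0+\textbf{U}_{\textbf{Q},\textbf{r}}\sqsubset\textbf{O}$ with $\textbf{Q}=[\textbf{p}_{\textbf{x}^*_\textbf{k}}\:;\:\textbf{1}\leq\textbf{k}\leq\textbf{n}]$ conditionally finite, approximate each $\textbf{x}^*_\textbf{k}$ by some $\textbf{y}^*_\textbf{k}\in\textbf{D}$ within $\frac{\textbf{r}}{\textbf{4}\textbf{M}}$, and check by the same estimate that $\left(\textbf{x}_0+\textbf{U}_{[\textbf{p}_{\textbf{y}^*_\textbf{k}}\:;\:\textbf{1}\leq\textbf{k}\leq\textbf{n}],\,\textbf{r}/\textbf{2}}\right)\sqcap\textbf{K}\sqsubset\textbf{O}\sqcap\textbf{K}$.

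The estimate itself is routine; the delicate points are the reduction step and the selection. On one hand, one must make sure the standard topological facts invoked — that a conditionally initial topology restricted to a conditional subset is again conditionally initial, and that conditional continuity of a conditional function into $\textbf{R}$ can be tested on conditional nets — are genuinely available in the conditional calculus of \cite{key-7}. On the other hand, in the direct (neighborhood) version one additionally needs a truly \emph{uniform} approximation of a conditionally finite family $[\textbf{x}^*_\textbf{k}\:;\:\textbf{1}\leq\textbf{k}\leq\textbf{n}]$ by elements of $\textbf{D}$ with a single conditional error bound $\frac{\textbf{r}}{\textbf{4}\textbf{M}}\in\textbf{R}^{++}$ of full support, which has to be produced through the conditional axiom of choice and the stability of $\textbf{D}$ rather than by a naive member-by-member choice, and one must ensure that passing through a partition of $1$ underlying the conditional natural number $\textbf{n}$ does not degrade that bound. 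I expect this bookkeeping to be the only real obstacle; everything else mirrors the classical Banach-space proof verbatim.
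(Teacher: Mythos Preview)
Your proposal is correct and follows essentially the same route as the paper: both show the identity is a conditional homeomorphism by testing on conditional nets, approximating the relevant functionals from $\textbf{D}$ within a tolerance scaled by the conditional bound of $\textbf{K}$, and applying the triangle inequality. Your ``alternative'' neighborhood version is in fact precisely the paper's argument (the paper works directly with a conditionally finite family $[\textbf{x}^*_\textbf{n}\:;\:\textbf{n}\leq\textbf{m}]$, selects approximants $\textbf{y}^*_n$ for each $n\in\N$ and then stabilizes, exactly as you anticipate in your closing remarks on bookkeeping), while your first version differs only cosmetically in handling one $\textbf{x}^*$ at a time via the initial-topology characterization.
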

\begin{proof}
It suffices to show that $\textbf{Id}:(\textbf{K},\sigma(\textbf{E},\textbf{E}^*)|_\textbf{K})\rightarrow (\textbf{K},\sigma(\textbf{E},\textbf{D})|_\textbf{K})$ 
is a conditional homeomorphism. It is clear that it is conditionally continuous. To show that the conditional inverse is continuous, take a conditional net $\{\textbf{x}_\textbf{i}\}$ in $\textbf{K}$ such that $\sigma(\textbf{E},\textbf{D})-\nlim_\textbf{i} \textbf{x}_\textbf{i}=\textbf{x}\in\textbf{K}$, and let us show that $\sigma(\textbf{E},\textbf{E}^*)-\nlim_\textbf{i} \textbf{x}_\textbf{i}=\textbf{x}$ too.  Given a conditionally finite subset $\textbf{G}\sqsubset\textbf{E}^*$ on $1$ and $\textbf{r}\in\textbf{R}^{++}$, consider the basic conditional neighborhood $\textbf{U}_{\textbf{G},\textbf{r}}$. Suppose that $\textbf{G}=[\textbf{x}^*_\textbf{n}\:;\:\textbf{n}\leq\textbf{m}]$ and fix $\textbf{s}\in \textbf{R}^{++}$. Then for each $n\in\N$, let $\textbf{y}^*_n\in \textbf{D}$ be such that $\Vert \textbf{x}^*_\textbf{n}-\textbf{y}^*_n\Vert\leq\textbf{s}$. For each $\textbf{n}\in\textbf{N}$ with $n=\sum n_i|a_i$, define $y^*_n:=\sum y^*_n|a_i$ and put $\textbf{H}:=[\textbf{y}^*_\textbf{n}\:;\:\textbf{n}\leq\textbf{m}]$. Then there is $\textbf{i}'$ such that $\textbf{x}_\textbf{i}\in\textbf{x}+\textbf{U}_{\textbf{H},\textbf{r/2}}$ for all $\textbf{i}\geq \textbf{i}'$. Thus, for $\textbf{i}\geq \textbf{i}'$ and $\textbf{n}\leq \textbf{m}$ 
\[
|\textbf{x}^*_\textbf{n}(\textbf{x}_\textbf{i}-\textbf{x})|\leq |(\textbf{x}^*_\textbf{n}-\textbf{y}^*_\textbf{n})(\textbf{x}_\textbf{i})|+|\textbf{y}^*_\textbf{n}(\textbf{x}_\textbf{i}-\textbf{x})|+|(\textbf{y}^*_\textbf{n}-\textbf{x}_\textbf{n}^*)(\textbf{x})|\leq 
\]
\[
\textbf{r}/\textbf{2} + \textbf{2}\textbf{s}\textbf{t}, 
\]
where $\textbf{t}>\textbf{0}$ is chosen so that $\textbf{t}>\Vert\textbf{x}\Vert$ for each $\textbf{x}\in\textbf{K}$.
We are done by just taking $\textbf{s}:=\frac{\textbf{r}}{\textbf{4}\textbf{t}}$.
\end{proof}

\begin{prop}
\label{prop: seqCompactClass}
The class of conditional Banach spaces having conditionally weakly-$*$ sequentially compact dual unit ball is closed under the operation of taking conditionally dense continuous linear images.
\end{prop}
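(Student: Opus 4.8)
The plan is to reduce everything to the conditional adjoint map. Let $\textbf{E}$ be a conditional Banach space whose conditional dual unit ball $\textbf{B}_{\textbf{E}^*}$ is conditionally weakly-$*$ sequentially compact, let $\textbf{F}$ be a conditional Banach space, and let $\textbf{T}\in\textbf{L}(\textbf{E},\textbf{F})$ have conditionally dense image, i.e. $\overline{\textbf{T}(\textbf{E})}=\textbf{F}$; the goal is to show that $\textbf{B}_{\textbf{F}^*}$ is conditionally weakly-$*$ sequentially compact. First I would introduce the conditional adjoint $\textbf{T}^*:\textbf{F}^*\rightarrow\textbf{E}^*$, $\textbf{T}^*(\textbf{y}^*):=\textbf{y}^*\circ\textbf{T}$. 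A routine verification shows that the underlying function is stable, so $\textbf{T}^*$ is a genuine conditional function, that it is conditionally linear with $\Vert\textbf{T}^*(\textbf{y}^*)\Vert\leq\Vert\textbf{T}\Vert\,\Vert\textbf{y}^*\Vert$, and that it is conditionally $\sigma(\textbf{F}^*,\textbf{F})$-to-$\sigma(\textbf{E}^*,\textbf{E})$ continuous; in particular $(\Vert\textbf{T}\Vert+\textbf{1})^{-1}\textbf{T}^*(\textbf{B}_{\textbf{F}^*})\sqsubset\textbf{B}_{\textbf{E}^*}$. Working with $\Vert\textbf{T}\Vert+\textbf{1}\in\textbf{R}^{++}$ rather than $\Vert\textbf{T}\Vert$ spares us from discussing the support of $\Vert\textbf{T}\Vert$ (which, incidentally, equals $\textbf{1}$ since $\textbf{T}(\textbf{E})$ is conditionally dense in a conditional vector space of support $\textbf{1}$).

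Now fix an arbitrary conditional sequence $\{\textbf{y}^*_\textbf{n}\}$ in $\textbf{B}_{\textbf{F}^*}$. The conditional sequence $\{(\Vert\textbf{T}\Vert+\textbf{1})^{-1}\textbf{T}^*(\textbf{y}^*_\textbf{n})\}$ lies in $\textbf{B}_{\textbf{E}^*}$, so by hypothesis it has a conditional subsequence, indexed by a conditional sequence $\{\textbf{n}_\textbf{k}\}$ in $\textbf{N}$, with a conditional limit point; since $\sigma(\textbf{E}^*,\textbf{E})$ is conditionally Hausdorff, this means $\textbf{T}^*(\textbf{y}^*_{\textbf{n}_\textbf{k}})$ conditionally $\sigma(\textbf{E}^*,\textbf{E})$-converges to some $\textbf{x}^*\in\textbf{E}^*$. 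Composing the index sequence $\{\textbf{n}_\textbf{k}\}$ with $\{\textbf{y}^*_\textbf{n}\}$ produces the corresponding conditional subsequence $\{\textbf{y}^*_{\textbf{n}_\textbf{k}}\}$ of $\{\textbf{y}^*_\textbf{n}\}$, and for every $\textbf{x}\in\textbf{E}$ we obtain $\textbf{y}^*_{\textbf{n}_\textbf{k}}(\textbf{T}(\textbf{x}))=\textbf{T}^*(\textbf{y}^*_{\textbf{n}_\textbf{k}})(\textbf{x})\to\textbf{x}^*(\textbf{x})$; equivalently, $\{\textbf{y}^*_{\textbf{n}_\textbf{k}}(\textbf{z})\}_\textbf{k}$ conditionally converges for every $\textbf{z}\in\textbf{T}(\textbf{E})$.

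Next I would upgrade convergence from the conditionally dense subset $\textbf{T}(\textbf{E})$ to all of $\textbf{F}$ by the classical three-term estimate, now carried out with conditional reals. Fix $\textbf{y}\in\textbf{F}$ and $\textbf{r}\in\textbf{R}^{++}$; conditional density produces, on $\textbf{1}$, an element $\textbf{z}=\textbf{T}(\textbf{x})\in\textbf{T}(\textbf{E})$ with $\Vert\textbf{y}-\textbf{z}\Vert<\textbf{r}/\textbf{3}$. Since $\{\textbf{y}^*_{\textbf{n}_\textbf{k}}\}$ is conditionally contained in $\textbf{B}_{\textbf{F}^*}$ and $\{\textbf{y}^*_{\textbf{n}_\textbf{k}}(\textbf{z})\}_\textbf{k}$ is conditionally Cauchy, the bound $|\textbf{y}^*_{\textbf{n}_\textbf{k}}(\textbf{y})-\textbf{y}^*_{\textbf{n}_\textbf{l}}(\textbf{y})|\leq|\textbf{y}^*_{\textbf{n}_\textbf{k}}(\textbf{y}-\textbf{z})|+|\textbf{y}^*_{\textbf{n}_\textbf{k}}(\textbf{z})-\textbf{y}^*_{\textbf{n}_\textbf{l}}(\textbf{z})|+|\textbf{y}^*_{\textbf{n}_\textbf{l}}(\textbf{z}-\textbf{y})|$ shows that $\{\textbf{y}^*_{\textbf{n}_\textbf{k}}(\textbf{y})\}_\textbf{k}$ is conditionally Cauchy in $\textbf{R}$, hence conditionally convergent. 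Set $\textbf{y}^*(\textbf{y}):=\nlim_\textbf{k}\textbf{y}^*_{\textbf{n}_\textbf{k}}(\textbf{y})$. Using conditional continuity of the conditional vector operations of $\textbf{F}$ and of the field operations and $|\cdot|$ on $\textbf{R}$, one checks that $\textbf{y}^*$ is a conditionally linear function on $\textbf{1}$ with $|\textbf{y}^*(\textbf{y})|\leq\Vert\textbf{y}\Vert$; thus $\textbf{y}^*\in\textbf{B}_{\textbf{F}^*}$, and by construction $\textbf{y}^*_{\textbf{n}_\textbf{k}}$ conditionally $\sigma(\textbf{F}^*,\textbf{F})$-converges to $\textbf{y}^*$. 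As $\{\textbf{y}^*_\textbf{n}\}$ was arbitrary, $\textbf{B}_{\textbf{F}^*}$ is conditionally weakly-$*$ sequentially compact, which is what had to be proved.

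The backbone of the argument is the conditional transcription of the classical one, so no conceptual difficulty is expected; the delicate points are purely matters of conditional bookkeeping. The main obstacle I anticipate is the third step: ensuring that the limit $\textbf{y}^*$ produced pointwise is an authentic conditional element of $\textbf{F}^*$ $-$ a stable, conditionally linear functional defined on $\textbf{1}$ $-$ rather than a merely pointwise gadget; this relies on the stability of the operations involved and on the conditional completeness of $\textbf{R}$. Two minor points also require attention: that the three-term estimate is valid verbatim with conditional reals and that the approximant $\textbf{z}$ can always be produced on $\textbf{1}$ from conditional density (not just on some partition of $\textbf{1}$), and that the subsequence extracted from $\textbf{B}_{\textbf{E}^*}$ indeed corresponds, index for index, to a conditional subsequence of $\{\textbf{y}^*_\textbf{n}\}$.
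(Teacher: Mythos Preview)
Your proof is correct and shares its starting point with the paper: both introduce the conditional adjoint $\textbf{T}^*:\textbf{F}^*\rightarrow\textbf{E}^*$ and use that $\textbf{T}^*(\textbf{B}_{\textbf{F}^*})$ sits inside a scaled copy of $\textbf{B}_{\textbf{E}^*}$. The difference lies in how the convergence is transported back to $\textbf{F}^*$. You work sequentially and pointwise: once $\textbf{T}^*(\textbf{y}^*_{\textbf{n}_\textbf{k}})$ converges, you show via the three-term estimate and conditional density that $\{\textbf{y}^*_{\textbf{n}_\textbf{k}}(\textbf{y})\}$ is conditionally Cauchy for every $\textbf{y}\in\textbf{F}$, then assemble the limit functional by hand and verify it lies in $\textbf{B}_{\textbf{F}^*}$. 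The paper instead argues topologically: it shows $\textbf{T}^*$ is conditionally injective and, invoking Lemma~\ref{lem: denseMatch} (which says $\sigma(\textbf{F}^*,\textbf{F})$ and $\sigma(\textbf{F}^*,\textbf{T}(\textbf{E}))$ agree on conditionally bounded sets), concludes that $\textbf{T}^*$ is a conditional weak-$*$ homeomorphism from $\textbf{B}_{\textbf{F}^*}$ onto its image; sequential compactness then transfers directly. Your route is more self-contained and avoids both Lemma~\ref{lem: denseMatch} and the implicit appeal to conditional Banach--Alaoglu needed to know the image is closed; the paper's route is shorter once that lemma is in place and packages the density argument in a reusable form. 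The bookkeeping concerns you flag (stability of the pointwise limit, producing the approximant on $1$) are genuine but routine in this framework, and your treatment of them is adequate.
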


\begin{proof}
For $\textbf{T}\in \textbf{L}(\textbf{E},\textbf{F})$ with $\overline{\textbf{T}(\textbf{E})}^{\Vert\cdot\Vert}=\textbf{F}$, it can be verified by inspection that $\textbf{T}^*:\textbf{F}^*\rightarrow \textbf{E}^*$ defined by $\textbf{T}^*(\textbf{y}^*)(\textbf{x}):=\textbf{y}^*(\textbf{T}(\textbf{x}))$ is a conditionally bounded operator, i.e. $\textbf{T}^*\in \textbf{L}(\textbf{F}^*,\textbf{E}^*)$

Also, it is conditionally injective. Indeed, let us suppose $\textbf{T}^*(\textbf{y}^*)\equiv \textbf{0}$; that is, $\textbf{y}^*(\textbf{T}(\textbf{x}))=\textbf{0}$ for all $\textbf{x}\in\textbf{E}$. We will show that $\textbf{y}^*\equiv \textbf{0}$. For given $\textbf{z}\in \textbf{F}$,  since $\overline{\textbf{T}(\textbf{E})}^{\Vert\cdot\Vert}=\textbf{F}$, there exists a conditional sequence $\{\textbf{x}_\textbf{n}\}$ in $\textbf{E}$ such that $\textbf{T}(\textbf{x}_\textbf{n})$ conditionally converges to $\textbf{z}$. Then $\textbf{y}^*(\textbf{z})=\nlim \textbf{y}^*(\textbf{T}(\textbf{x}_\textbf{n}))=\textbf{0}$.

It follows that $\textbf{T}^*:(\textbf{B}_{\textbf{F}^*},\sigma(\textbf{F}^*,\textbf{T}(\textbf{E}))|_{\textbf{B}_{\textbf{F}^*}})\rightarrow (\textbf{T}^*(\textbf{B}_{\textbf{F}^*}),\sigma(\textbf{E}^*,\textbf{E})|_{\textbf{T}^*(\textbf{B}_{\textbf{F}^*})})$ is a conditional homeomorphism. In fact, it is easy to see that $\textbf{T}^*$  induces a conditional bijection between basic conditional neighborhoods of $\sigma(\textbf{F}^*,\textbf{T}(\textbf{E}))|_{\textbf{B}_\textbf{F}^*}$ and basic conditional neighborhoods of  $\sigma(\textbf{E}^*,\textbf{E})|_{\textbf{T}^*(\textbf{B}_{\textbf{F}^*})}$. Since $\textbf{T}(\textbf{E})\sqsubset\textbf{F}$ is conditionally dense, due to Lemma \ref{lem: denseMatch}, $\textbf{T}^*$ is a conditionally weak-$*$ homeomorphism between $\textbf{B}_{\textbf{F}^*}$ and $\textbf{T}^*(\textbf{B}_{\textbf{F}^*})$.  
  
\end{proof}  

The following lemma is based on a result of A. Grothendieck:

\begin{lem} 
\label{lem: Groth}
Let $\textbf{E}$ be a conditional Bananch space, and let $\textbf{K}\sqsubset \textbf{E}$ be on $1$ and conditionally weakly closed.  Suppose that for each $\textbf{r}\in\textbf{R}^{++}$ there is a conditionally weakly compact set $\textbf{K}_\textbf{r}\sqsubset \textbf{E}$ such that
\[
\textbf{K}\sqsubset \textbf{K}_\textbf{r} + \textbf{r} \textbf{B}_{\textbf{E}}.
\]
Then $\textbf{K}$ is conditionally $\sigma(\textbf{E},\textbf{E}^*)$-compact.
\end{lem}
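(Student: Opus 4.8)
The plan is to mimic the classical argument showing that a weakly closed set that is "approximable" by weakly compact sets up to arbitrarily small norm balls is itself weakly compact, working throughout in the conditional second dual. First I would pass to $\textbf{E}^{**}$ via the natural conditional embedding $\textbf{j}:\textbf{E}\rightarrow\textbf{E}^{**}$ of Theorem \ref{thm: imbedding}. By conditional Banach--Alaoglu (\cite[Theorem 5.10]{key-7}) together with Lemma \ref{lem: L0bounded}, the hypothesis $\textbf{K}\sqsubset\textbf{K}_\textbf{r}+\textbf{r}\textbf{B}_\textbf{E}$ forces $\textbf{K}$ to be conditionally bounded, so $\overline{\textbf{j}(\textbf{K})}^{\sigma(\textbf{E}^{**},\textbf{E}^*)}$ is conditionally $\sigma(\textbf{E}^{**},\textbf{E}^*)$-compact. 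In view of Lemma \ref{lem: WSClosure}, it suffices to show that $\overline{\textbf{j}(\textbf{K})}^{\sigma(\textbf{E}^{**},\textbf{E}^*)}\sqsubset\textbf{j}(\textbf{E})$; since $\textbf{K}$ is conditionally weakly closed, this will give conditional weak compactness of $\textbf{K}$.

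So fix $\textbf{x}^{**}\in\overline{\textbf{j}(\textbf{K})}^{\sigma(\textbf{E}^{**},\textbf{E}^*)}$; the goal is to produce $\textbf{x}\in\textbf{E}$ with $\textbf{j}(\textbf{x})=\textbf{x}^{**}$. The key step is the estimate $\textbf{d}\bigl(\textbf{x}^{**},\textbf{j}(\textbf{E})\bigr)\leq\textbf{2}\textbf{r}$ for every $\textbf{r}\in\textbf{R}^{++}$, where $\textbf{d}$ denotes the conditional norm distance in $\textbf{E}^{**}$. To get this, I would use that each $\textbf{K}_\textbf{r}$ is conditionally weakly compact, hence by Lemma \ref{lem: L0bounded} and the same Banach--Alaoglu argument $\textbf{j}(\textbf{K}_\textbf{r})$ is conditionally $\sigma(\textbf{E}^{**},\textbf{E}^*)$-compact, and in particular conditionally $\sigma(\textbf{E}^{**},\textbf{E}^*)$-closed; moreover $\overline{\textbf{j}(\textbf{K}_\textbf{r})}^{\sigma(\textbf{E}^{**},\textbf{E}^*)}=\textbf{j}(\textbf{K}_\textbf{r})\sqsubset\textbf{j}(\textbf{E})$ by Lemma \ref{lem: WSClosure} again (each $\textbf{K}_\textbf{r}$ being conditionally weakly compact, hence conditionally weakly closed by Lemma \ref{lem: L0bounded}). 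Now $\sigma(\textbf{E}^{**},\textbf{E}^*)$-closures are monotone, and the conditional ball $\textbf{r}\textbf{B}_{\textbf{E}^{**}}$ is conditionally $\sigma(\textbf{E}^{**},\textbf{E}^*)$-closed (being the conditional intersection of conditional half-spaces $[\textbf{y}^{**}\:;\:|\textbf{y}^{**}(\textbf{x}^*)|\leq\textbf{r}]$ over $\textbf{x}^*\in\textbf{B}_{\textbf{E}^*}$, using Theorem \ref{thm: imbedding}); and $\textbf{j}(\textbf{K}_\textbf{r})+\textbf{r}\textbf{B}_{\textbf{E}^{**}}$ is conditionally $\sigma(\textbf{E}^{**},\textbf{E}^*)$-closed because it is the sum of a conditionally $\sigma(\textbf{E}^{**},\textbf{E}^*)$-compact set and a conditionally $\sigma(\textbf{E}^{**},\textbf{E}^*)$-closed set. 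Taking conditional $\sigma(\textbf{E}^{**},\textbf{E}^*)$-closures in $\textbf{j}(\textbf{K})\sqsubset\textbf{j}(\textbf{K}_\textbf{r})+\textbf{r}\textbf{B}_\textbf{E}\sqsubset\textbf{j}(\textbf{K}_\textbf{r})+\textbf{r}\textbf{B}_{\textbf{E}^{**}}$ yields $\overline{\textbf{j}(\textbf{K})}^{\sigma(\textbf{E}^{**},\textbf{E}^*)}\sqsubset\textbf{j}(\textbf{K}_\textbf{r})+\textbf{r}\textbf{B}_{\textbf{E}^{**}}$, so $\textbf{x}^{**}$ lies conditionally within conditional distance $\textbf{r}$ of $\textbf{j}(\textbf{K}_\textbf{r})\sqsubset\textbf{j}(\textbf{E})$, giving $\textbf{d}(\textbf{x}^{**},\textbf{j}(\textbf{E}))\leq\textbf{r}$.

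Finally, since $\textbf{r}\in\textbf{R}^{++}$ is arbitrary, the conditional distance from $\textbf{x}^{**}$ to the conditional subspace $\textbf{j}(\textbf{E})$ is conditionally $\leq\textbf{r}$ for all $\textbf{r}>\textbf{0}$, hence conditionally $\textbf{0}$; as $\textbf{E}$ is conditionally Banach, $\textbf{j}(\textbf{E})$ is conditionally norm closed in $\textbf{E}^{**}$ (Theorem \ref{thm: imbedding} gives that $\textbf{j}$ is a conditional isometry, so the image of a conditionally complete space is conditionally complete, hence conditionally closed), and therefore $\textbf{x}^{**}\in\textbf{j}(\textbf{E})$. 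This establishes $\overline{\textbf{j}(\textbf{K})}^{\sigma(\textbf{E}^{**},\textbf{E}^*)}\sqsubset\textbf{j}(\textbf{E})$ and completes the proof. I expect the main obstacle to be the verification that $\textbf{j}(\textbf{K}_\textbf{r})+\textbf{r}\textbf{B}_{\textbf{E}^{**}}$ is genuinely conditionally $\sigma(\textbf{E}^{**},\textbf{E}^*)$-closed: the classical fact "compact $+$ closed $=$ closed" must be transported to the conditional setting, which requires care with conditional nets and the local structure on $a\in\A$ (one argues on the partition witnessing membership), and one must check that the conditional ball of the bidual, rather than that of $\textbf{E}$, is the correct object after taking closures. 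Everything else is a routine transcription of the classical Grothendieck argument using the conditional tools already assembled, in particular Lemmas \ref{lem: L0bounded}, \ref{lem: WSClosure} and Theorem \ref{thm: imbedding}.
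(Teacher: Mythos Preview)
Your proposal is correct and follows essentially the same route as the paper: reduce to showing $\overline{\textbf{j}(\textbf{K})}^{\sigma(\textbf{E}^{**},\textbf{E}^*)}\sqsubset\textbf{j}(\textbf{E})$ via Lemma \ref{lem: WSClosure}, then use the inclusion $\overline{\textbf{j}(\textbf{K})}^{\omega^*}\sqsubset\textbf{j}(\textbf{K}_\textbf{r})+\textbf{r}\textbf{B}_{\textbf{E}^{**}}$ for every $\textbf{r}$ together with the conditional norm-closedness of $\textbf{j}(\textbf{E})$. The only cosmetic differences are that the paper phrases the closure step via ``continuity of addition'' and the conditional axiom of choice to build a net $\textbf{y}_\textbf{r}\to\textbf{x}^{**}$ in $\textbf{j}(\textbf{E})$, whereas you package the same content as ``compact $+$ closed $=$ closed'' and a conditional distance argument; your explicit verification that $\textbf{K}$ is conditionally bounded (needed to invoke Lemma \ref{lem: WSClosure}) is in fact a point the paper glosses over.
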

\begin{proof}

 Let $\overline{\textbf{K}}^{\omega^*}$ denote the conditional $\sigma(\textbf{E}^{**},\textbf{E}^*)$-closure of $\textbf{j}(\textbf{K})$, with $\textbf{j}$ the natural conditional embedding. Due to Lemma \ref{lem: WSClosure}, it suffices to show that $\overline{\textbf{K}}^{\omega^*}\sqsubset\textbf{E}$. 

For each $\textbf{r}\in\textbf{R}^{++}$ it holds that $\overline{\textbf{K}_\textbf{r}}^{\omega^*}=\textbf{K}_\textbf{r}$. Further, the conditional continuity of addition yields
\[
\overline{\textbf{K}}^{\omega^*}\sqsubset \overline{\textbf{K}_\textbf{r} + \textbf{r} \textbf{B}_{\textbf{E}}}^{\omega^*}
\sqsubset \overline{\textbf{K}_\textbf{r}}^{\omega^*} +\overline{\textbf{r} \textbf{B}_{\textbf{E}}}^{\omega^*}
\sqsubset \textbf{j}(\textbf{K}_\textbf{r}) + \textbf{r} \textbf{B}_{\textbf{E}^{**}}.
\]
Consequently 
\[
\overline{\textbf{K}}^{\omega^*}\sqsubset\underset{\textbf{r}\in\textbf{R}^{++}}\sqcap \left(\textbf{j}(\textbf{K}_\textbf{r}) + \textbf{r} \textbf{B}_{\textbf{E}^{**}}\right). 
\]
But $\sqcap\left( \textbf{j}(\textbf{K}_\textbf{r}) + \textbf{r} \textbf{B}_{\textbf{E}^{**}}\right)\sqsubset \textbf{j}(\textbf{E})$.

Indeed, let be given $\textbf{x}\in\sqcap\left( \textbf{j}(\textbf{K}_\textbf{r}) + \textbf{r} \textbf{B}_{\textbf{E}^{**}}\right)$. For each $\textbf{r}\in\textbf{R}^{++}$ let $\textbf{A}_\textbf{r}:=\left[(\textbf{y},\textbf{u})\in\textbf{j}(\textbf{K}_\textbf{r})\Join\textbf{B}_{\textbf{E}^{**}}\:;\:\textbf{x}=\textbf{y}+\textbf{r}\textbf{u}\right]$. Thereby, we obtain a conditional family $\{\textbf{A}_\textbf{r}\}_{\textbf{r}\in\textbf{R}^{++}}$ of conditional sets. The conditional axiom of choice (see \cite[Theorem 2.26]{key-7}) allows us to find a conditional family $\{(\textbf{y}_\textbf{r},\textbf{u}_\textbf{r})\}_{\textbf{r}\in\textbf{R}^{++}}$ such that $\textbf{x}=\textbf{y}_\textbf{r}+\textbf{r}\textbf{u}_\textbf{r}$ with $\textbf{y}_\textbf{r}\in\textbf{j}(\textbf{K}_\textbf{r})$ and $\textbf{u}_\textbf{r}\in\textbf{B}_{\textbf{E}^{**}}$ for all $\textbf{r}\in\textbf{R}^{++}$. Then $\{\textbf{y}_\textbf{r}\}_{\textbf{r}\in\textbf{R}^{++}}$ defines a conditional net in $\textbf{j}(\textbf{E})$ which conditionally converges to $\textbf{x}$. Given that $\textbf{E}$ is conditionally complete, we have that $\textbf{j}(\textbf{E})$ is conditionally closed in $\textbf{E}^{**}$, and it follows that $\textbf{x}\in\textbf{j}(\textbf{E})$.   
  
\end{proof}

\begin{lem} 
\label{lem: banachDisk}
Let $\textbf{E}$ be a conditional Banach space, and let $\textbf{C}\sqsubset\textbf{E}$ be on $1$, conditionally bounded,  convex and balanced. If we define $\textbf{E}_\textbf{C}:=\spa_{\textbf{R}} \textbf{C}$ and the conditional gauge functional $\Vert\cdot\Vert_\textbf{C}$, then:
\begin{enumerate}
	\item $\Vert\cdot\Vert_\textbf{C}$ is a conditional norm on $\textbf{E}_\textbf{C}$ that induces a conditional topology which is a finer  than the one induced by $\Vert\cdot\Vert$ on $\textbf{E}_\textbf{C}$;
	\item If $\textbf{C}$ is conditionally sequentially closed (i.e, $\{\textbf{x}_\textbf{n}\}\sqsubset\textbf{C}$ and $\nlim_\textbf{n}\textbf{x}_\textbf{n}=\textbf{x}$ implies $\textbf{x}\in\textbf{C}$), then $(\textbf{E}_\textbf{C},\Vert\cdot\Vert_\textbf{C})$ is conditionally Banach;
	\item If $\textbf{C}$ is a conditional neighborhood of $\textbf{0}\in \textbf{E}$, then $\textbf{E}_\textbf{C}=\textbf{E}$ and, besides, $\Vert\cdot\Vert$ and $\Vert\cdot\Vert_\textbf{C}$ are equivalent conditional norms on $\textbf{E}$.
\end{enumerate}
\end{lem}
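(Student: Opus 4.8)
The plan is to dispatch the three items in order, using Proposition \ref{prop: gauge} for the algebraic properties of the conditional gauge and the conditional completeness of $\textbf{E}$ for (2).

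For (1), note that $\textbf{E}_\textbf{C}=\spa_\textbf{R}\textbf{C}$ is, by its very definition, a conditional subspace in which $\textbf{C}$ is conditionally absorbing: every $\textbf{x}\in\textbf{E}_\textbf{C}$ is a conditionally finite conditional linear combination of elements of $\textbf{C}$, hence, using that $\textbf{C}$ is conditionally convex and conditionally balanced, lies in $\textbf{s}\textbf{C}$ for some $\textbf{s}\in\textbf{R}^{++}$ on $\supp(\textbf{x})$ and equals $\textbf{0}$ on the complement. So Proposition \ref{prop: gauge}, applied with $\textbf{E}_\textbf{C}$ in the role of $\textbf{E}$, already gives that $\Vert\cdot\Vert_\textbf{C}$ is a conditional seminorm on $\textbf{E}_\textbf{C}$. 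To see it is a conditional norm and that its conditional topology is the finer one, I would invoke conditional boundedness of $\textbf{C}$: fix $\textbf{M}\in\textbf{R}^{++}$ with $\Vert\textbf{y}\Vert\leq\textbf{M}$ for all $\textbf{y}\in\textbf{C}$; then $\textbf{x}\in\textbf{r}\textbf{C}$ implies $\Vert\textbf{x}\Vert\leq\textbf{r}\textbf{M}$, and taking the conditional infimum over such $\textbf{r}$ yields $\Vert\textbf{x}\Vert\leq\textbf{M}\Vert\textbf{x}\Vert_\textbf{C}$ for every $\textbf{x}\in\textbf{E}_\textbf{C}$. In particular $\Vert\textbf{x}\Vert_\textbf{C}=\textbf{0}$ forces $\textbf{x}=\textbf{0}$, and the same estimate shows the conditional identity $(\textbf{E}_\textbf{C},\Vert\cdot\Vert_\textbf{C})\rightarrow(\textbf{E}_\textbf{C},\Vert\cdot\Vert)$ is conditionally continuous, i.e. the $\Vert\cdot\Vert_\textbf{C}$-topology refines the $\Vert\cdot\Vert$-topology on $\textbf{E}_\textbf{C}$.

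For (2), let $\{\textbf{x}_\textbf{n}\}$ be conditionally $\Vert\cdot\Vert_\textbf{C}$-Cauchy. By (1) it is conditionally $\Vert\cdot\Vert$-Cauchy, hence conditionally $\Vert\cdot\Vert$-convergent to some $\textbf{x}\in\textbf{E}$. I would then pass to a conditional subsequence $\{\textbf{x}_{\textbf{n}_\textbf{k}}\}$ with $\Vert\textbf{x}_{\textbf{n}_{\textbf{k}+\textbf{1}}}-\textbf{x}_{\textbf{n}_\textbf{k}}\Vert_\textbf{C}\leq\textbf{1}/\textbf{2}^{\textbf{k}+\textbf{1}}$, so that, the conditional gauge being an infimum, $\textbf{x}_{\textbf{n}_{\textbf{k}+\textbf{1}}}-\textbf{x}_{\textbf{n}_\textbf{k}}\in(\textbf{1}/\textbf{2}^\textbf{k})\textbf{C}$. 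For $\textbf{k}\leq\textbf{m}$ the partial sum $\sum_{\textbf{k}\leq\textbf{j}\leq\textbf{m}}(\textbf{x}_{\textbf{n}_{\textbf{j}+\textbf{1}}}-\textbf{x}_{\textbf{n}_\textbf{j}})$ is, by conditional convexity and conditional balance of $\textbf{C}$, contained in $(\textbf{1}/\textbf{2}^{\textbf{k}-\textbf{1}})\textbf{C}$; as $\textbf{m}$ varies these partial sums form a conditional sequence in $(\textbf{1}/\textbf{2}^{\textbf{k}-\textbf{1}})\textbf{C}$ conditionally $\Vert\cdot\Vert$-convergent to $\textbf{x}-\textbf{x}_{\textbf{n}_\textbf{k}}$. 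Since $\textbf{C}$, and hence $(\textbf{1}/\textbf{2}^{\textbf{k}-\textbf{1}})\textbf{C}$, is conditionally sequentially closed, it follows that $\textbf{x}-\textbf{x}_{\textbf{n}_\textbf{k}}\in(\textbf{1}/\textbf{2}^{\textbf{k}-\textbf{1}})\textbf{C}\sqsubset\textbf{E}_\textbf{C}$, so $\textbf{x}\in\textbf{E}_\textbf{C}$ and $\Vert\textbf{x}-\textbf{x}_{\textbf{n}_\textbf{k}}\Vert_\textbf{C}\leq\textbf{1}/\textbf{2}^{\textbf{k}-\textbf{1}}$. Hence the subsequence conditionally $\Vert\cdot\Vert_\textbf{C}$-converges to $\textbf{x}$, and since a conditionally Cauchy sequence with a conditionally convergent subsequence is conditionally convergent, $\{\textbf{x}_\textbf{n}\}$ conditionally $\Vert\cdot\Vert_\textbf{C}$-converges to $\textbf{x}$.

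For (3), if $\textbf{C}$ is a conditional neighborhood of $\textbf{0}\in\textbf{E}$ then it is conditionally absorbing in all of $\textbf{E}$, whence $\textbf{E}_\textbf{C}=\spa_\textbf{R}\textbf{C}=\textbf{E}$; moreover there is $\textbf{r}_\textbf{0}\in\textbf{R}^{++}$ with $\textbf{B}_{\textbf{r}_\textbf{0}}(\textbf{0})\sqsubset\textbf{C}$, so that $\textbf{x}\in(\Vert\textbf{x}\Vert/\textbf{r}_\textbf{0})\textbf{C}$ on $\supp(\textbf{x})$ and hence $\Vert\textbf{x}\Vert_\textbf{C}\leq(\textbf{1}/\textbf{r}_\textbf{0})\Vert\textbf{x}\Vert$; combined with $\Vert\textbf{x}\Vert\leq\textbf{M}\Vert\textbf{x}\Vert_\textbf{C}$ from (1), this gives equivalence of the two conditional norms. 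I expect the delicate point to be (2): one has to verify that the lacunary conditional subsequence can be selected (the conditional gauge being an infimum gives membership only in the strictly larger dilate, which is why the $\textbf{1}/\textbf{2}^{\textbf{k}+\textbf{1}}$ margin is taken), that conditionally finite conditional convex combinations of points of a conditionally convex, conditionally balanced set stay in the appropriate dilate, and that conditional \emph{sequential} closedness of $\textbf{C}$ — rather than conditional closedness — already suffices to trap the $\Vert\cdot\Vert$-limit. The support bookkeeping in (1) and (3), when dividing by $\Vert\textbf{x}\Vert$, is handled by conditioning on $\supp(\textbf{x})$ as elsewhere in the paper.
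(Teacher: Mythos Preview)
Your proposal is correct and follows essentially the same route as the paper: Proposition~\ref{prop: gauge} together with conditional boundedness for (1), the lacunary conditional subsequence plus telescoping into dilates of $\textbf{C}$ and conditional sequential closedness for (2), and the conditional ball inside $\textbf{C}$ giving the reverse estimate for (3). The paper's only cosmetic difference is that in (2) it first selects classical indices $n_1<n_2<\cdots$ with $\textbf{x}_{\textbf{n}_{k+1}}-\textbf{x}_{\textbf{n}_k}\in\textbf{2}^{-\textbf{k}}\textbf{C}$ and then concatenates to form the conditional subsequence, rather than phrasing the selection conditionally from the start; your extra $\textbf{1}/\textbf{2}$ margin to pass from the gauge bound to actual membership in the dilate is exactly the care this step needs.
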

\begin{proof}
\begin{enumerate}
	\item Suppose $\textbf{C}$ is conditionally bounded, convex and balanced. First note that $\textbf{E}_\textbf{C}=\underset{\textbf{n}\in\textbf{N}}\sqcup \textbf{n}\textbf{C}$, hence $\textbf{C}$ is conditionally absorbent in $\textbf{E}_\textbf{C}$. By Proposition \ref{prop: gauge}, $\Vert\cdot\Vert_\textbf{C}$ is a conditional seminorm on $\textbf{E}_\textbf{C}$. Let $\textbf{r}\in \textbf{R}^{++}$ be  such that $\Vert\textbf{x}\Vert\leq \textbf{r}$ for all $\textbf{x}\in \textbf{C}$. Fix $\textbf{x}\in \textbf{E}_\textbf{C}$, and let us take a conditional sequence $\{\textbf{s}_\textbf{n}\}\in\textbf{R}^{++}$ which conditionally converges to $\Vert\textbf{x}\Vert_{\textbf{C}}$ and with $\textbf{x}\in \textbf{s}_\textbf{n}\textbf{C}$ for all $\textbf{n}\in\textbf{N}$. Then $\Vert\textbf{x}\Vert\leq \textbf{s}_\textbf{n} \textbf{r}$, and therefore $\Vert\textbf{x}\Vert\leq \textbf{r}\Vert\textbf{x}\Vert_\textbf{C}$. This proves that $\Vert\cdot\Vert_\textbf{C}$ is a conditional norm and $\mathcal{T}_{\Vert\cdot\Vert}\sqsubset\mathcal{T}_{{\Vert\cdot\Vert}_\textbf{C}}$.
\item Suppose that $\textbf{C}$ is conditionally sequentially closed and let $\{\textbf{x}_\textbf{n}\}$ be a conditional Cauchy sequence in $(\textbf{E}_\textbf{C},\Vert\cdot\Vert_\textbf{C})$. Then, we can choose a sequence $\textbf{n}_1<\textbf{n}_2<...$ such that $\textbf{x}_{\textbf{n}_{k+1}}-\textbf{x}_{\textbf{n}_k}\in \textbf{2}^{-\textbf{k}}\textbf{C}$ for each $k\in\N$. For $\textbf{k}\in\textbf{N}$ of the form $k=\sum k_i|a_i$, we define $y_k:=\sum x_{n_{k_i}}|a_i$. By doing so, we obtain a conditional sequence $\{\textbf{y}_\textbf{k}\}$ such that $\textbf{y}_{\textbf{k}+\textbf{1}}-\textbf{y}_\textbf{k}\in \textbf{2}^{-\textbf{k}}\textbf{C}$ for each $\textbf{k}\in\textbf{N}$. Since $\textbf{C}$ is conditionally bounded, it follows that $\{\textbf{y}_\textbf{k}\}$ is also conditionally Cauchy with respect to $\Vert\cdot\Vert$. Now, since
\[
\begin{array}{cc}
\textbf{y}_\textbf{n}=\textbf{y}_\textbf{1} + \underset{\textbf{1}\leq\textbf{k}\leq\textbf{n}}\sum (\textbf{y}_\textbf{k}-\textbf{y}_{\textbf{k}-\textbf{1}})\in \textbf{y}_\textbf{1} + \underset{\textbf{1}\leq\textbf{k}\leq\textbf{n}}\sum \textbf{2}^\textbf{k}\textbf{C}\sqsubset\textbf{y}_\textbf{1} +\textbf{C}& \textnormal{ for all }\textbf{n}\in\textbf{N},
\end{array}
\]

we have that $\{\textbf{y}_\textbf{n}\}$ is in $\textbf{y}_\textbf{1}+\textbf{C}$. Since $\textbf{y}_\textbf{1}+\textbf{C}$ is conditionally sequentially closed, there exists $\textbf{x}\in\textbf{y}_\textbf{1}+\textbf{C}\sqsubset\textbf{E}_\textbf{C}$ such that $\{\textbf{y}_\textbf{n}\}$ conditionally converges to $\textbf{x}$. Let us show that $\Vert \textbf{y}_\textbf{n}-\textbf{x}\Vert_\textbf{C}$ conditionally converges to $\textbf{0}$. For $\textbf{p}\geq \textbf{q}\geq \textbf{2}$, $\textbf{p},\textbf{q}\in\textbf{N}$,
\[
\textbf{y}_\textbf{p}-\textbf{y}_\textbf{q}= \sum_{\textbf{q}+\textbf{1}\leq\textbf{k}\leq\textbf{p}}(\textbf{y}_\textbf{k}-\textbf{y}_{\textbf{k}-\textbf{1}})\in \sum_{\textbf{1}\leq\textbf{k}\leq\textbf{p}} \textbf{2}^{-(\textbf{k}-\textbf{1})}\textbf{C}\sqsubset \textbf{2}^{-(\textbf{q}-\textbf{1})}\textbf{C}.
\]   
We obtain that $\textbf{y}_\textbf{p}-\textbf{y}_\textbf{q}\in \textbf{2}^{-(\textbf{q}-\textbf{1})}\textbf{C}$ whenever $\textbf{p},\textbf{q}\in\textbf{N}$, $\textbf{p}\geq\textbf{q}\geq \textbf{2}$. Being $\textbf{C}$ conditionally sequentially closed, it follows that $\textbf{x}-\textbf{y}_\textbf{q}\in \textbf{2}^{-(\textbf{q}-\textbf{1})}\textbf{C}$ for all $\textbf{q}$, and therefore $\Vert\textbf{x}-\textbf{y}_\textbf{q}\Vert_\textbf{C}$ conditionally converges to $\textbf{0}$. Then, by using that $\{\textbf{x}_\textbf{n}\}$ is conditionally Cauchy, it is easy to prove that it also conditionally converges to $\textbf{x}$. 
\item
 Now suppose that $\textbf{C}$ is a conditional neighborhood of $\textbf{0}\in \textbf{E}$. Let us take a conditional ball $\textbf{r}\textbf{B}_\textbf{E}$ conditionally contained into $\textbf{C}$. Then,  $\textbf{r}\textbf{x}\Vert\textbf{x}\Vert^{-1}\in \textbf{C}$ for all $\textbf{x}\in\textbf{E}$, and consequently $\textbf{r}\Vert\textbf{x}\Vert_\textbf{C} \leq \Vert\textbf{x}\Vert$. This proves that both conditional norms are equivalent.
\end{enumerate}
\end{proof}

\begin{thm}
\label{thm: reflex}
A conditional Banach space $\textbf{E}$ is conditionally reflexive (i.e. $\textbf{j}(\textbf{E})=\textbf{E}^{**}$) if, and only if, the conditional unit ball $\textbf{B}_{\textbf{E}}$ is conditionally weakly compact.

In that case, the conditional dual unit ball $\textbf{B}_{\textbf{E}^*}$ is conditionally weakly-$*$ sequentially compact.  
\end{thm}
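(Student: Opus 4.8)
The plan is to read off both implications from conditional Goldstine's theorem (Theorem~\ref{thm: Goldstine}) and conditional Banach--Alaoglu (see \cite[Theorem 5.10]{key-7}), and then to deduce the last assertion from the conditional Eberlein--\v{S}mulian theorem (Theorem~\ref{thm: EberleinSmulian}). Throughout I would use, exactly as in the proof of Lemma~\ref{lem: WSClosure}, that the natural conditional embedding $\textbf{j}$ of Theorem~\ref{thm: imbedding} is a conditional isometry which is moreover a conditional homeomorphism from $(\textbf{E},\sigma(\textbf{E},\textbf{E}^*))$ onto $(\textbf{j}(\textbf{E}),\sigma(\textbf{E}^{**},\textbf{E}^*)|_{\textbf{j}(\textbf{E})})$, since $\sigma(\textbf{E}^{**},\textbf{E}^*)|_{\textbf{j}(\textbf{E})}=\sigma(\textbf{j}(\textbf{E}),\textbf{E}^*)$.

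First I would prove the equivalence. If $\textbf{E}$ is conditionally reflexive then $\textbf{j}(\textbf{B}_\textbf{E})=\textbf{B}_{\textbf{E}^{**}}$ (as $\textbf{j}$ is a conditional isometry onto $\textbf{E}^{**}$), and conditional Banach--Alaoglu makes $\textbf{B}_{\textbf{E}^{**}}$ conditionally $\sigma(\textbf{E}^{**},\textbf{E}^*)$-compact; transporting back through the conditional homeomorphism $\textbf{j}$ shows that $\textbf{B}_\textbf{E}$ is conditionally weakly compact. Conversely, assume $\textbf{B}_\textbf{E}$ is conditionally weakly compact. Then $\textbf{j}(\textbf{B}_\textbf{E})$ is the conditionally continuous image of a conditionally compact set (see \cite[Proposition 3.26]{key-7}), hence conditionally $\sigma(\textbf{E}^{**},\textbf{E}^*)$-compact, hence conditionally $\sigma(\textbf{E}^{**},\textbf{E}^*)$-closed because that conditional topology is conditionally Hausdorff. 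By conditional Goldstine's theorem $\textbf{j}(\textbf{B}_\textbf{E})$ is also conditionally $\sigma(\textbf{E}^{**},\textbf{E}^*)$-dense in $\textbf{B}_{\textbf{E}^{**}}$, so $\textbf{j}(\textbf{B}_\textbf{E})=\textbf{B}_{\textbf{E}^{**}}$. Finally, for $\textbf{x}^{**}\in\textbf{E}^{**}$ put $a:=\supp(\textbf{x}^{**})$; on $a^c$ one has $\textbf{x}^{**}=\textbf{0}=\textbf{j}(\textbf{0})$, while $\Vert\textbf{x}^{**}\Vert^{-1}\textbf{x}^{**}$ restricted to $a$ lies in $\textbf{B}_{\textbf{E}^{**}}|a=\textbf{j}(\textbf{B}_\textbf{E})|a$, whence $\textbf{x}^{**}|a\in\textbf{j}(\textbf{E})|a$; concatenating along $\{a,a^c\}$ yields $\textbf{x}^{**}\in\textbf{j}(\textbf{E})$, so $\textbf{j}(\textbf{E})=\textbf{E}^{**}$.

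For the last assertion, suppose $\textbf{E}$ is conditionally reflexive. Then $\textbf{j}(\textbf{E})=\textbf{E}^{**}$ forces $\sigma(\textbf{E}^*,\textbf{E})=\sigma(\textbf{E}^*,\textbf{E}^{**})$, that is, the conditional weak-$*$ topology on $\textbf{E}^*$ coincides with its conditional weak topology. By conditional Banach--Alaoglu the conditional dual unit ball $\textbf{B}_{\textbf{E}^*}$ is conditionally $\sigma(\textbf{E}^*,\textbf{E})$-compact, hence conditionally weakly compact in the conditional Banach space $\textbf{E}^*$. The conditional Eberlein--\v{S}mulian theorem (Theorem~\ref{thm: EberleinSmulian}) then gives that $\textbf{B}_{\textbf{E}^*}$ is conditionally weakly sequentially compact, and by the coincidence of the two topologies this is precisely conditional weak-$*$ sequential compactness.

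The main obstacle is not analytic: once Goldstine, Banach--Alaoglu and Eberlein--\v{S}mulian are in hand the argument is short. The points that need care are the support/concatenation bookkeeping in passing from $\textbf{j}(\textbf{B}_\textbf{E})=\textbf{B}_{\textbf{E}^{**}}$ to $\textbf{j}(\textbf{E})=\textbf{E}^{**}$, and checking that a conditionally compact subset of the conditionally Hausdorff conditional weak-$*$ space is conditionally closed, so that ``conditionally closed plus conditionally dense equals the whole conditional set'' may be applied.
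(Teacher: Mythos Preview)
Your proposal is correct and follows essentially the same route as the paper: both directions of the equivalence come from conditional Banach--Alaoglu together with conditional Goldstine (the paper packages the forward implication through Lemma~\ref{lem: WSClosure}, whose proof is exactly Banach--Alaoglu plus the identification $\sigma(\textbf{E}^{**},\textbf{E}^*)|_{\textbf{j}(\textbf{E})}=\sigma(\textbf{j}(\textbf{E}),\textbf{E}^*)$ you spell out), and the final assertion is obtained from Eberlein--\v{S}mulian after identifying $\sigma(\textbf{E}^*,\textbf{E})$ with $\sigma(\textbf{E}^*,\textbf{E}^{**})$ via reflexivity. Your support/concatenation step from $\textbf{j}(\textbf{B}_\textbf{E})=\textbf{B}_{\textbf{E}^{**}}$ to $\textbf{j}(\textbf{E})=\textbf{E}^{**}$ is a detail the paper leaves implicit, and the ``conditionally compact implies conditionally closed in a conditionally Hausdorff space'' step is also used tacitly in the paper; both are routine.
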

\begin{proof}
First suppose that $\textbf{E}$ is conditionally reflexive, then $\textbf{B}_{\textbf{E}}$ is conditionally weakly compact as a consequence of Lemma \ref{lem: WSClosure}.

Conversely, suppose that $\textbf{B}_{\textbf{E}}$ is conditionally weakly compact. Then it is clear that $\textbf{j}(\textbf{B}_{\textbf{E}})$ is conditionally $\sigma(\textbf{E}^{**},\textbf{E}^*)$-compact. Therefore, $\textbf{j}(\textbf{B}_{\textbf{E}})$ is a conditionally $\sigma(\textbf{E}^{**},\textbf{E}^*)$-closed subset of $\textbf{B}_{\textbf{E}^{**}}$. But Theorem \ref{thm: Goldstine} yields that $\textbf{j}(\textbf{B}_{\textbf{E}})=\textbf{B}_{\textbf{E}^{**}}$.

Let us show the second part of the theorem.
By the conditional Banach-Alaoglu theorem (see \cite[Theorem 5.10]{key-7}), $\textbf{B}_{\textbf{E}^*}$ is conditionally weakly-$*$ compact, and as $\textbf{E}$ is conditionally reflexive, we have that $\textbf{B}_{\textbf{E}^*}$ is conditionally $\sigma(\textbf{E}^*,\textbf{E}^{**})$-compact. Besides, by  Theorem \ref{thm: EberleinSmulian}, $\textbf{B}_{\textbf{E}^*}$ is conditionally sequentially $\sigma(\textbf{E}^*,\textbf{E}^{**})$-compact, and by using again that $\textbf{E}$ is conditionally reflexive, we obtain that $\textbf{B}_{\textbf{E}^*}$ is conditionally weak-$*$ sequentially compact.
\end{proof}

\begin{lem}
\label{lem: l2}
Let $\{\textbf{E}_\textbf{n}\}$ be a conditionally countable family of conditional Banach spaces $(\textbf{E}_\textbf{n},\Vert\cdot\Vert_\textbf{n})$. Consider the conditional set
\[
\oplus_{\textbf{n}\in\textbf{N}}^2 \textbf{E}_\textbf{n} :=\left[ \{\textbf{x}_\textbf{n}\}\in \conprod_{\textbf{n}\in\textbf{N}} \textbf{E}_\textbf{n} \:;\: \sum_{\textbf{n}\geq \textbf{1}} \Vert \textbf{x}_\textbf{n}\Vert^\textbf{2}_\textbf{n}\in \textbf{R}\right].
\]
Then we have the following:

\begin{enumerate}
	\item $(\oplus_{\textbf{n}\in\textbf{N}}^2 \textbf{E}_\textbf{n},\Vert\cdot\Vert_2)$ with $\Vert\cdot\Vert_2:=( \sum_{\textbf{n}\geq \textbf{1}} \Vert \textbf{x}_\textbf{n}\Vert^\textbf{2})^\frac{\textbf{1}}{\textbf{2}}$ is a conditional Banach space;
	\item $\left(\oplus_{\textbf{n}\in\textbf{N}}^2 \textbf{E}_\textbf{n}\right)^*=\oplus_{\textbf{n}\in\textbf{N}}^2 \textbf{E}_\textbf{n}^*$;
	\item The conditional set
	\[
\oplus_{\textbf{n}\in\textbf{N}}^0 \textbf{E}_\textbf{n}^*:=\left[(\textbf{x}_\textbf{n}^*) \:;\: \exists \textbf{m}\in\textbf{N},\: \textbf{x}_\textbf{n}=\textbf{0}\textnormal{ for all }\textbf{n}>\textbf{m}\right]
\] 
	is conditionally dense in $\oplus_{\textbf{n}\in\textbf{N}}^2 \textbf{E}_\textbf{n}$.
\end{enumerate}
\end{lem}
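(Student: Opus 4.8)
The plan is to prove the three assertions of Lemma \ref{lem: l2} by adapting the classical proofs for $\ell^2$-sums of Banach spaces, taking care at each step to use the \emph{stable} (conditional) versions of the tools from the excerpt, in particular the conditional series introduced above, the conditional axiom of choice (\cite[Theorem 2.26]{key-7}), and the conditional form of Hölder/Minkowski inequalities on $\textbf{R}$, which follow from the conditionally Dedekind complete totally ordered field structure of $\textbf{R}$. Throughout, all constructions of sequences indexed by $\textbf{N}$ proceed by first defining the data on each fixed $n\in\N$ and then concatenating along a partition $n=\sum n_i|a_i$, exactly as in the other lemmas of this section; I will suppress this routine step in the write-up.

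For (1), the first point is to check that $\oplus_{\textbf{n}\in\textbf{N}}^2\textbf{E}_\textbf{n}$ is a conditional vector subspace of $\conprod_{\textbf{n}\in\textbf{N}}\textbf{E}_\textbf{n}$ and that $\Vert\cdot\Vert_2$ is well defined and a conditional norm; closure under addition and the triangle inequality both reduce to the conditional Minkowski inequality for $\ell^2$-norms of conditionally finite tuples in $\textbf{R}$, passed to the conditional limit using that $\textbf{R}$ is conditionally Dedekind complete. Conditional completeness is the heart of (1): given a conditionally Cauchy sequence $\{\textbf{x}^\textbf{k}\}$ with $\textbf{x}^\textbf{k}=(\textbf{x}^\textbf{k}_\textbf{n})_\textbf{n}$, one observes that for each $\textbf{n}$ the sequence $\{\textbf{x}^\textbf{k}_\textbf{n}\}_\textbf{k}$ is conditionally Cauchy in $\textbf{E}_\textbf{n}$, hence conditionally converges to some $\textbf{x}_\textbf{n}\in\textbf{E}_\textbf{n}$; one then shows $\textbf{x}:=(\textbf{x}_\textbf{n})\in\oplus_{\textbf{n}\in\textbf{N}}^2\textbf{E}_\textbf{n}$ and $\nlim_\textbf{k}\textbf{x}^\textbf{k}=\textbf{x}$ in $\Vert\cdot\Vert_2$ by a standard $\varepsilon/3$-type argument applied to the conditionally finite partial sums $\sum_{\textbf{1}\leq\textbf{n}\leq\textbf{m}}\Vert\textbf{x}^\textbf{k}_\textbf{n}-\textbf{x}_\textbf{n}\Vert_\textbf{n}^\textbf{2}$ and then letting $\textbf{m}$ grow, invoking the conditional monotone convergence behaviour of conditional series of nonnegative conditional reals. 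The assembly of the limit $\textbf{x}_\textbf{n}$ into a single conditional element of the product requires the conditional axiom of choice.

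For (2), the plan is to exhibit the conditional isometric isomorphism $\Phi:\oplus_{\textbf{n}\in\textbf{N}}^2\textbf{E}_\textbf{n}^*\rightarrow\left(\oplus_{\textbf{n}\in\textbf{N}}^2\textbf{E}_\textbf{n}\right)^*$ sending $(\textbf{x}_\textbf{n}^*)$ to the conditionally linear functional $(\textbf{x}_\textbf{n})\mapsto\sum_{\textbf{n}\geq\textbf{1}}\textbf{x}_\textbf{n}^*(\textbf{x}_\textbf{n})$, where convergence of the conditional series and the bound $|\Phi((\textbf{x}_\textbf{n}^*))(\textbf{x})|\leq\Vert(\textbf{x}_\textbf{n}^*)\Vert_2\,\Vert\textbf{x}\Vert_2$ come from the conditional Cauchy–Schwarz inequality on $\textbf{R}$. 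That $\Phi$ is conditionally norm-preserving and conditionally surjective is obtained as in the classical case: given $\textbf{f}\in\left(\oplus_{\textbf{n}\in\textbf{N}}^2\textbf{E}_\textbf{n}\right)^*$, define $\textbf{x}_\textbf{n}^*\in\textbf{E}_\textbf{n}^*$ by composing $\textbf{f}$ with the canonical conditional embedding $\textbf{E}_\textbf{n}\hookrightarrow\oplus_{\textbf{n}\in\textbf{N}}^2\textbf{E}_\textbf{n}$, check $(\textbf{x}_\textbf{n}^*)\in\oplus_{\textbf{n}\in\textbf{N}}^2\textbf{E}_\textbf{n}^*$ by testing $\textbf{f}$ against conditionally finitely supported elements whose $\textbf{n}$-th entries conditionally nearly norm $\textbf{x}_\textbf{n}^*$ (again using the norming functionals produced as in the proof of Theorem \ref{thm: imbedding} and the conditional axiom of choice), and verify $\Phi((\textbf{x}_\textbf{n}^*))=\textbf{f}$ by conditional density of the finitely supported vectors, which is the first assertion of (3) applied to the $\textbf{E}_\textbf{n}$ themselves. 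Assertion (3) itself is then immediate: for $(\textbf{x}_\textbf{n})\in\oplus_{\textbf{n}\in\textbf{N}}^2\textbf{E}_\textbf{n}$ and $\textbf{r}\in\textbf{R}^{++}$, the tail bound $\sum_{\textbf{n}>\textbf{m}}\Vert\textbf{x}_\textbf{n}\Vert_\textbf{n}^\textbf{2}\leq\textbf{r}^\textbf{2}$ holds for a suitable $\textbf{m}\in\textbf{N}$ by definition of conditional convergence of the defining series, so the conditionally finitely supported truncation lies within conditional distance $\textbf{r}$.

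The main obstacle I expect is not any single inequality but the bookkeeping required to keep every object \emph{stable}: each time an index $\textbf{m}\in\textbf{N}$, a near-norming functional in some $\textbf{E}_\textbf{n}^*$, or a limit $\textbf{x}_\textbf{n}$ is chosen ``for each $\textbf{n}$,'' one must pass through a partition-and-concatenation step and justify stability of the resulting conditional family, and when the choice is genuinely over a conditional family (as opposed to a plain countable family of the $\textbf{E}_\textbf{n}$) one must invoke \cite[Theorem 2.26]{key-7}. The analytic content — completeness in (1), the norming duality in (2) — is a transcription of the classical arguments once the conditional versions of Hölder's and Minkowski's inequalities and of monotone convergence for conditional series of nonnegative conditional reals are in hand, all of which follow from $\textbf{R}$ being a conditionally Dedekind complete conditionally totally ordered field; I will state these as routine consequences and not reprove them.
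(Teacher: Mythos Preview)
Your proposal is correct and follows essentially the same route as the paper: coordinate-wise limits plus a tail estimate for (1), the pairing $\Phi((\textbf{x}_\textbf{n}^*))(\textbf{x})=\sum_{\textbf{n}\geq\textbf{1}}\textbf{x}_\textbf{n}^*(\textbf{x}_\textbf{n})$ with the conditional Cauchy--Schwarz inequality (the paper isolates this as Proposition~\ref{prop: CSinequality}) and near-norming test vectors for (2), and truncation for (3). Your explicit care with Minkowski and with choosing elements that nearly norm $\textbf{x}_\textbf{n}^*$ is, if anything, slightly more precise than the paper's write-up; note that assembling the coordinate limits $\textbf{x}_\textbf{n}$ does not actually require the conditional axiom of choice since each limit is unique---the paper simply checks that $m\mapsto x^m$ is stable.
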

\begin{proof}
\begin{enumerate}
	\item 
	Inspection shows that $\oplus_{\textbf{n}\in\textbf{N}}^2 \textbf{E}_\textbf{n}$ is a conditional vectorial space and $\Vert\cdot\Vert$ is a conditional norm.

Let us prove that $\oplus_{\textbf{n}\in\textbf{N}}^2 \textbf{E}_\textbf{n}$ is conditionally complete. Let $\{\textbf{x}_\textbf{n}\}$ be a conditionally Cauchy sequence in $\oplus_{\textbf{n}\in\textbf{N}}^2 \textbf{E}_\textbf{n}$, with $\textbf{x}_\textbf{n}=\{\textbf{x}^\textbf{m}_\textbf{n}\}_{\textbf{m}\in\textbf{N}}$ for each $\textbf{n}\in\textbf{N}$. Let be given $\textbf{r}>\textbf{0}$, let us choose an $\textbf{n}_\textbf{r}\in\textbf{N}$ such that
\[
\begin{array}{cc}
\Vert\textbf{x}_\textbf{p} - \textbf{x}_\textbf{q}\Vert_2^\textbf{2}= \sum_{\textbf{m}\geq\textbf{1}} \Vert \textbf{x}^\textbf{m}_\textbf{p}-\textbf{x}^\textbf{m}_\textbf{q}\Vert^\textbf{2}_\textbf{m}\leq \textbf{r}^\textbf{2}, & \textnormal{ for all }\textbf{p}\geq\textbf{q}\geq\textbf{n}_\textbf{r}.
\end{array}
\] 
We see that, for each $\textbf{m}$, the conditional sequence $\{\textbf{x}^\textbf{m}_\textbf{n}\}_{\textbf{n}\in\textbf{N}}$ is conditionally Cauchy and, being $\textbf{E}_\textbf{m}$ conditionally Banach, it converges to some $\textbf{x}^\textbf{m}\in \textbf{E}_\textbf{m}$. Besides, note that the application $m\mapsto x^m$ is stable, hence $\textbf{x}:=\{\textbf{x}^\textbf{n}\}\in \conprod_{\textbf{n}\in\textbf{N}} \textbf{E}_\textbf{n}$.

Now, for fixed $\textbf{k}\in\textbf{N}$ and for each $\textbf{n}\geq \textbf{n}_\textbf{r}$, we have 
\[
\sum_{\textbf{1}\leq\textbf{m}\leq\textbf{k}}\Vert \textbf{x}^\textbf{m}- \textbf{x}^\textbf{m}_\textbf{n} \Vert_\textbf{m}^\textbf{2}=\nlim_{\textbf{j}} \sum_{\textbf{1}\leq\textbf{m}\leq\textbf{k}}\Vert \textbf{x}^\textbf{m}_\textbf{j}- \textbf{x}^\textbf{m}_\textbf{n} \Vert_\textbf{m}^\textbf{2}\leq \textbf{r}^\textbf{2}.
\]
Then, by taking conditional limits in $\textbf{k}$, we obtain $\Vert \textbf{x}-\textbf{x}_\textbf{n} \Vert_2\leq \textbf{r}$ for $\textbf{n}\geq \textbf{n}_\textbf{r}$. This means that, first,  $\textbf{x}-\textbf{x}_\textbf{n}\in \oplus_{\textbf{n}\in\textbf{N}}^2 \textbf{E}_\textbf{n}$ for $\textbf{n}\geq \textbf{n}_\textbf{r}$, and consequently so does $\textbf{x}$; second, since $\textbf{r}>\textbf{0}$ is arbitrary, it follows that $\nlim \textbf{x}_\textbf{n}=\textbf{x}$. 

\item Consider the conditional function $\textbf{T}:\oplus_{\textbf{n}\in\textbf{N}}^2 \textbf{E}_\textbf{n}^*\rightarrow\left(\oplus_{\textbf{n}\in\textbf{N}}^2 \textbf{E}_\textbf{n}\right)^*$, $\textbf{x}^*\mapsto\textbf{T}_{\textbf{x}^*}$, where 
\[
\begin{array}{cc}
\textbf{T}_{\textbf{x}^*}(\{\textbf{x}_\textbf{n}\}):=\sum_{\textbf{n}\geq\textbf{1}}\textbf{x}^*_\textbf{n}(\textbf{x}_\textbf{n}), & \textnormal{ with }\textbf{x}^*=\{\textbf{x}^*_\textbf{n}\}.
\end{array}
\]

We claim that $\textbf{T}$ is a conditional isometric isomorphism.

The Cauchy-Schwarz inequality for conditional series, which is  proved in the Appendix (see Proposition \ref{prop: CSinequality}), allows us to show that $\textbf{T}$ is well defined and 
\begin{equation}
\label{eq: kk}
\Vert\textbf{T}_{\textbf{x}^*}\Vert\leq \Vert\textbf{x}^*\Vert_2.
\end{equation}
On the other hand, let be given $\textbf{x}^*=\{\textbf{x}^*_\textbf{n}\}\in \oplus_{\textbf{n}\in\textbf{N}}^2 \textbf{E}_\textbf{n}^*$. 

For each $n\in\N$, we can choose $\textbf{y}_n\in\textbf{E}_\textbf{n}$ with $\Vert\textbf{y}_n\Vert_\textbf{n}=\textbf{1}$. For $\textbf{n}\in\textbf{N}$ of the form $n=\sum n_i|a_i$, let us put $\textbf{y}_\textbf{n}=y_n|1$ with ${y}_{n}:=\sum y_{n_i}|a_i$. Doing so, we obtain a conditional sequence $\{\textbf{y}_\textbf{n}\}$ so that $\Vert\textbf{y}_\textbf{n}\Vert_\textbf{n}=\textbf{1}$ for each $\textbf{n}$.

Now, we construct the conditional sequence $\{\textbf{x}_\textbf{n}\}$ where $\textbf{x}_\textbf{n}:=\textbf{x}^*_\textbf{n}(\textbf{y}_\textbf{n})\textbf{y}_\textbf{n}$.

Note that $\Vert\textbf{x}_\textbf{n}\Vert_\textbf{n}=|\textbf{x}^*_\textbf{n}(\textbf{y}_\textbf{n})|$ for each $\textbf{n}$. Consequently we have  
\begin{equation}
\label{eq: l2}
\begin{array}{cc}
\textbf{x}^*_\textbf{n}(\textbf{x}_\textbf{n})=\Vert\textbf{x}_\textbf{n}\Vert^\textbf{2}_\textbf{n}, & \textnormal{ for each }\textbf{n}\in\textbf{N}.
\end{array}
\end{equation}

 Now, for given $\textbf{n},\textbf{m}\in\textbf{N}$, let us define 
\[
\begin{array}{cc}
\textbf{z}^\textbf{m}_\textbf{n}:=\textbf{x}_\textbf{n}|a + \textbf{0}|a^c, &\textnormal{ with }a:=\vee\{b\in\A\:;\:\textbf{n}|b\leq\textbf{m}|b\}.
\end{array}
\] 

For $\textbf{m}\in\textbf{N}$, we have that $\textbf{z}^\textbf{m}:=\{\textbf{z}^\textbf{m}_\textbf{n}\}_{\textbf{n}\in\textbf{N}}$ is a conditional sequence.

Then, in view of (\ref{eq: l2}), for fixed $\textbf{m}\in\textbf{N}$, it follows that
\[
\textbf{T}_{\textbf{x}^*}(\textbf{z}^\textbf{m})=\sum_{\textbf{1}\leq\textbf{n}\leq\textbf{m}}\textbf{x}^*_\textbf{n}(\textbf{x}_\textbf{n})=\sum_{\textbf{1}\leq\textbf{n}\leq\textbf{m}} \Vert\textbf{x}_\textbf{n}\Vert_\textbf{n}^\textbf{2}.
\] 

The inequality $\left|\textbf{T}_{\textbf{x}^*}(\textbf{z}^\textbf{m})\right|\leq\Vert\textbf{T}_{\textbf{x}^*}\Vert\Vert \textbf{z}^\textbf{m}\Vert_2$ together with the above equation, implies that
\[
\sum_{\textbf{1}\leq\textbf{n}\leq\textbf{m}} \Vert\textbf{x}_\textbf{n}\Vert_\textbf{n}^\textbf{2}\leq \Vert\textbf{T}_{\textbf{x}^*}\Vert \Vert\textbf{z}^\textbf{m}\Vert_2 = \Vert\textbf{T}_{\textbf{x}^*}\Vert \left(\sum_{\textbf{1}\leq\textbf{n}\leq\textbf{m}}\Vert\textbf{x}\Vert^\textbf{2}_\textbf{n}  \right)^\frac{\textbf{1}}{\textbf{2}}.
\]
It follows that
\[
\Vert\textbf{T}_{\textbf{x}^*}\Vert\geq \left(\sum_{\textbf{1}\leq\textbf{n}\leq\textbf{m}}\Vert\textbf{x}^*_\textbf{n}\Vert^\textbf{2}_\textbf{n}  \right)^\frac{\textbf{1}}{\textbf{2}}.
\]
Since $\textbf{m}\in\textbf{N}$ is arbitrary, we finally obtain $\Vert\textbf{T}_{\textbf{x}^*}\Vert\geq \Vert\textbf{x}^*\Vert_2$.

Let us show that $\textbf{T}$ is conditionally surjective. Indeed, suppose $\textbf{x}^*\in\left(\oplus_{\textbf{n}\in\textbf{N}}^2 \textbf{E}_\textbf{n}\right)^*$. For each $\textbf{n}\in\textbf{N}$, we define $\textbf{x}^*_\textbf{n}:\textbf{E}_\textbf{n}\rightarrow\textbf{R}$, $\textbf{x}^*_\textbf{n}(\textbf{x}):=\textbf{x}^*(\{\textbf{y}_\textbf{m}\}_{\textbf{m}\in\textbf{N}})$ with 
\[
\begin{array}{cc}
{y}_{m}:=x_n|a + {0}|a^c, & a:=\vee\{b\in\A\:;\:\textbf{m}|b=\textbf{n}|b\}.
\end{array}
\]
By doing so, we obtain
\[
|\textbf{x}^*_\textbf{n}(\textbf{x})|=|\textbf{x}^*(\{\textbf{y}_\textbf{m}\})|\leq\Vert\textbf{x}^*\Vert\Vert\{\textbf{y}_\textbf{m}\}\Vert_2=\Vert\textbf{x}^*\Vert\Vert\textbf{x}\Vert_\textbf{n}.
\]
This mean that $\textbf{x}^*_\textbf{n}\in \textbf{E}_\textbf{n}$ for each $\textbf{n}$. Besides, $\textbf{T}(\{\textbf{x}_\textbf{n}^*\})=\textbf{x}^*$.

\item Given $\textbf{x}=\{\textbf{x}_\textbf{n}\}\in\oplus_{\textbf{n}\in\textbf{N}}^2 \textbf{E}_\textbf{n}$, we take the conditional sequence $\{\textbf{s}_\textbf{n}\}$ where $\textbf{s}_\textbf{n}:=\sum_{\textbf{1}\leq\textbf{k}\leq\textbf{n}}\textbf{x}_\textbf{k}$. Inspection shows that this conditional sequence conditionally converges to $\textbf{x}$.
\end{enumerate}
\end{proof}

\begin{lem}
\label{lem: Amir-Lin}
Let $(\textbf{E},\Vert\cdot\Vert)$ be a conditional Bananch space, and let $\textbf{K}\sqsubset \textbf{E}$ be on $1$, conditionally weakly compact, convex and balanced. Then there is a conditional subset $\textbf{C}$ of $\textbf{E}$ which is also conditionally weakly compact, convex and  balanced with $\textbf{K}\sqsubset\textbf{C}$, and so that $(\textbf{E}_\textbf{C},\Vert\cdot\Vert)$ is a conditionally reflexive normed space.   
\end{lem}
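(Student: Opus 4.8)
The plan is to adapt the Davis--Figiel--Johnson--Pe\l czy\'nski factorization to the conditional setting. First I would fix, for each $\textbf{n}\in\textbf{N}$, the conditional set $\textbf{C}_\textbf{n}:=\textbf{2}^\textbf{n}\textbf{K}+\textbf{2}^{-\textbf{n}}\textbf{B}_\textbf{E}$ (the assignment $\textbf{n}\mapsto\textbf{C}_\textbf{n}$ is visibly stable, so all the families below are conditional families). Since $\textbf{K}$ is conditionally weakly compact it is conditionally bounded and conditionally norm closed (Lemma \ref{lem: L0bounded}), so $\textbf{C}_\textbf{n}$ is conditionally convex, balanced and bounded; moreover, using Theorem \ref{thm: EberleinSmulian} together with Proposition \ref{prop: weakClosure}, the conditional sum of the conditionally weakly compact set $\textbf{2}^\textbf{n}\textbf{K}$ and the conditionally closed convex set $\textbf{2}^{-\textbf{n}}\textbf{B}_\textbf{E}$ is conditionally norm closed, and $\textbf{C}_\textbf{n}$ is conditionally absorbent because it conditionally contains $\textbf{2}^{-\textbf{n}}\textbf{B}_\textbf{E}$. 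Hence, by Proposition \ref{prop: gauge}, the conditional gauge $\Vert\cdot\Vert_\textbf{n}:=\Vert\cdot\Vert_{\textbf{C}_\textbf{n}}$ is a conditional norm with $\Vert\textbf{x}\Vert_\textbf{n}\leq\textbf{1}$ if and only if $\textbf{x}\in\textbf{C}_\textbf{n}$, and by Lemma \ref{lem: banachDisk}(3) it is equivalent to $\Vert\cdot\Vert$. I would then put $\Vert\textbf{x}\Vert_\textbf{C}:=\left(\sum_{\textbf{n}\geq\textbf{1}}\Vert\textbf{x}\Vert_\textbf{n}^\textbf{2}\right)^{\frac{\textbf{1}}{\textbf{2}}}$, $\textbf{C}:=\left[\textbf{x}\in\textbf{E}\;;\;\Vert\textbf{x}\Vert_\textbf{C}\leq\textbf{1}\right]$ and $\textbf{E}_\textbf{C}:=\spa_\textbf{R}\textbf{C}$. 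It is routine that $\textbf{C}$ is conditionally convex, balanced and bounded (it is conditionally contained in each $\textbf{C}_\textbf{n}$). For $\textbf{x}\in\textbf{K}$ one has $\textbf{2}^\textbf{n}\textbf{x}\in\textbf{2}^\textbf{n}\textbf{K}\sqsubset\textbf{C}_\textbf{n}$, so $\Vert\textbf{x}\Vert_\textbf{n}\leq\textbf{2}^{-\textbf{n}}$ and $\Vert\textbf{x}\Vert_\textbf{C}^\textbf{2}\leq\sum_{\textbf{n}\geq\textbf{1}}\textbf{4}^{-\textbf{n}}\leq\textbf{1}$; hence $\textbf{K}\sqsubset\textbf{C}$. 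Finally, since each $\Vert\cdot\Vert_\textbf{n}$ is conditionally $\Vert\cdot\Vert$-continuous, $\textbf{C}$ is conditionally $\Vert\cdot\Vert$-closed, hence conditionally weakly closed by Proposition \ref{prop: weakClosure}, and in particular conditionally sequentially closed.

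The next step is to show that $\textbf{C}$ is conditionally weakly compact in $\textbf{E}$. For each $\textbf{r}\in\textbf{R}^{++}$ I would choose, by a stable selection, an $\textbf{n}\in\textbf{N}$ with $\textbf{2}^{-\textbf{n}}\leq\textbf{r}$; then $\textbf{C}\sqsubset\textbf{C}_\textbf{n}\sqsubset\textbf{2}^\textbf{n}\textbf{K}+\textbf{r}\textbf{B}_\textbf{E}$, where $\textbf{2}^\textbf{n}\textbf{K}$ is conditionally weakly compact (being the image of $\textbf{K}$ under the conditional homeomorphism $\textbf{x}\mapsto\textbf{2}^\textbf{n}\textbf{x}$). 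Lemma \ref{lem: Groth} then yields that $\textbf{C}$ is conditionally $\sigma(\textbf{E},\textbf{E}^*)$-compact. Next, $\textbf{C}$ being conditionally sequentially closed, Lemma \ref{lem: banachDisk}(2) shows that $(\textbf{E}_\textbf{C},\Vert\cdot\Vert_\textbf{C})$ is a conditional Banach space; moreover, the conditional diagonal map $\Delta\textbf{x}:=\{\textbf{x}\}_\textbf{n}$ identifies $\textbf{E}_\textbf{C}$ conditionally isometrically with a conditionally closed conditional subspace of $\textbf{Z}:=\oplus_{\textbf{n}\in\textbf{N}}^2(\textbf{E},\Vert\cdot\Vert_\textbf{n})$ (the closedness follows because the norms $\Vert\cdot\Vert_\textbf{n}$ are mutually equivalent, so that coordinatewise limits agree).

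The heart of the argument is the conditional reflexivity of $(\textbf{E}_\textbf{C},\Vert\cdot\Vert_\textbf{C})$, which by Theorem \ref{thm: reflex} is equivalent to the conditional weak compactness of $\textbf{B}_{\textbf{E}_\textbf{C}}=\textbf{C}$ in $\textbf{E}_\textbf{C}$. Write $\textbf{j}:\textbf{E}_\textbf{C}\rightarrow\textbf{E}$ for the conditional inclusion, which is conditionally continuous by Lemma \ref{lem: banachDisk}(1), and $\textbf{i}:\textbf{E}\rightarrow\textbf{E}^{**}$, $\textbf{i}_{\textbf{E}_\textbf{C}}:\textbf{E}_\textbf{C}\rightarrow\textbf{E}_\textbf{C}^{**}$ for the natural conditional embeddings. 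I would first check that $\textbf{j}^{**}$ is conditionally injective: identifying $\textbf{E}_\textbf{C}$ with $\Delta(\textbf{E}_\textbf{C})\sqsubset\textbf{Z}$, the conditional restriction map $\textbf{Z}^*\rightarrow\textbf{E}_\textbf{C}^*$ is conditionally surjective by conditional Hahn--Banach (Theorem \ref{thm: HB}), while $\textbf{Z}^*$ is the corresponding $\ell^2$-sum of the duals $(\textbf{E},\Vert\cdot\Vert_\textbf{n})^*$ (Lemma \ref{lem: l2}(2)), each of which coincides with $\textbf{E}^*$ as a conditional set since $\Vert\cdot\Vert_\textbf{n}\sim\Vert\cdot\Vert$; as the conditionally finitely supported elements are conditionally dense in $\textbf{Z}^*$ (Lemma \ref{lem: l2}(3)) and each of them restricts, via $\Delta$, to an element of $\textbf{j}^*(\textbf{E}^*)$, it follows that $\textbf{j}^*(\textbf{E}^*)$ is conditionally dense in $\textbf{E}_\textbf{C}^*$, which forces $\textbf{j}^{**}$ to be conditionally injective. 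Next, conditional Goldstine (Theorem \ref{thm: Goldstine}) gives $\textbf{B}_{\textbf{E}_\textbf{C}^{**}}=\overline{\textbf{i}_{\textbf{E}_\textbf{C}}(\textbf{C})}^{\sigma(\textbf{E}_\textbf{C}^{**},\textbf{E}_\textbf{C}^*)}$; by the conditional weak-$*$ continuity of $\textbf{j}^{**}$ and the conditional Banach--Alaoglu theorem, $\textbf{j}^{**}(\textbf{B}_{\textbf{E}_\textbf{C}^{**}})$ is conditionally weak-$*$ compact and hence equals $\overline{\textbf{i}(\textbf{C})}^{\sigma(\textbf{E}^{**},\textbf{E}^*)}$. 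But $\textbf{C}$ is conditionally weakly compact in $\textbf{E}$ by the previous step, so $\textbf{i}(\textbf{C})$ is conditionally weak-$*$ compact, hence conditionally weak-$*$ closed, and therefore $\textbf{j}^{**}(\textbf{B}_{\textbf{E}_\textbf{C}^{**}})=\textbf{i}(\textbf{C})=\textbf{j}^{**}(\textbf{i}_{\textbf{E}_\textbf{C}}(\textbf{C}))$. Since $\textbf{j}^{**}$ is conditionally injective and $\textbf{i}_{\textbf{E}_\textbf{C}}(\textbf{C})\sqsubset\textbf{B}_{\textbf{E}_\textbf{C}^{**}}$, it follows that $\textbf{B}_{\textbf{E}_\textbf{C}^{**}}=\textbf{i}_{\textbf{E}_\textbf{C}}(\textbf{B}_{\textbf{E}_\textbf{C}})$, i.e. $\textbf{E}_\textbf{C}$ is conditionally reflexive. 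Combined with the first two steps, this gives all the asserted properties of $\textbf{C}$ and of $(\textbf{E}_\textbf{C},\Vert\cdot\Vert_\textbf{C})$.

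The main obstacle I anticipate is this last step: extracting conditional reflexivity of $\textbf{E}_\textbf{C}$, rather than merely conditional weak compactness of $\textbf{C}$ inside the ambient space $\textbf{E}$. It hinges on two ingredients that must be combined carefully: the conditional injectivity of $\textbf{j}^{**}$, which relies on the $\ell^2$-sum structure and on the conditional density assertion of Lemma \ref{lem: l2}, and the identity $\textbf{j}^{**}(\textbf{B}_{\textbf{E}_\textbf{C}^{**}})=\textbf{i}(\textbf{C})$, which combines conditional Goldstine, conditional Banach--Alaoglu, and the conditional weak compactness of $\textbf{C}$ furnished by the conditional Grothendieck lemma. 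Secondary technical points requiring care are the stable selection of $\textbf{n}=\textbf{n}(\textbf{r})$ with $\textbf{2}^{-\textbf{n}}\leq\textbf{r}$, and the manipulation of conditional weak-$*$ closures of conditional sums of a conditionally weak-$*$ compact set and a conditionally weak-$*$ closed set, which I would justify via conditional nets and the joint conditional continuity of addition.
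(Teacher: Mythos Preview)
Your proof is correct and follows the same Davis--Figiel--Johnson--Pe\l czy\'nski construction as the paper: the sets $\textbf{C}_\textbf{n}=\textbf{2}^\textbf{n}\textbf{K}+\textbf{2}^{-\textbf{n}}\textbf{B}_\textbf{E}$, the gauge norms $\Vert\cdot\Vert_\textbf{n}$, the definition of $\textbf{C}$ and $\textbf{E}_\textbf{C}$, the inclusion $\textbf{K}\sqsubset\textbf{C}$, and the conditional weak compactness of $\textbf{C}$ in $\textbf{E}$ via the Grothendieck lemma are all identical to the paper's argument.

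The one genuine difference is in the final step deducing conditional reflexivity of $\textbf{E}_\textbf{C}$. The paper argues directly that $\sigma(\textbf{E},\textbf{E}^*)$ and $\sigma(\textbf{E}_\textbf{C},\textbf{E}_\textbf{C}^*)$ coincide on $\textbf{C}$: it embeds $\textbf{E}_\textbf{C}$ diagonally into $\oplus^2_\textbf{n}(\textbf{E},\Vert\cdot\Vert_\textbf{n})$, uses Lemma~\ref{lem: l2} to identify the dual and to see that $\oplus^0_\textbf{n}\textbf{E}^*_\textbf{n}$ is conditionally dense, and then invokes Lemma~\ref{lem: denseMatch} on the bounded set $\textbf{C}$; weak compactness of $\textbf{C}$ in $\textbf{E}$ then transfers verbatim to $\textbf{E}_\textbf{C}$, and Theorem~\ref{thm: reflex} finishes. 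You use the same density fact (that $\textbf{j}^*(\textbf{E}^*)$ is conditionally dense in $\textbf{E}_\textbf{C}^*$) but package it as conditional injectivity of $\textbf{j}^{**}$, and then combine conditional Goldstine with conditional Banach--Alaoglu and the weak compactness of $\textbf{C}$ to conclude $\textbf{B}_{\textbf{E}_\textbf{C}^{**}}=\textbf{i}_{\textbf{E}_\textbf{C}}(\textbf{C})$ directly at the bidual level. Both routes rest on exactly the same structural ingredient; the paper's is a bit shorter and avoids passing through biduals, while yours is more self-contained in that it reproves what Lemma~\ref{lem: denseMatch} would give. As a minor remark, your detour through Theorem~\ref{thm: EberleinSmulian} to get $\textbf{C}_\textbf{n}$ conditionally norm closed is unnecessary: $\textbf{2}^\textbf{n}\textbf{K}$ is conditionally weakly compact and $\textbf{2}^{-\textbf{n}}\textbf{B}_\textbf{E}$ is conditionally weakly closed (convex and norm closed), so their sum is conditionally weakly closed, hence norm closed, without any appeal to sequential compactness.
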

\begin{proof}
For each $\textbf{n}\in\textbf{N}$ put $\textbf{B}_{\textbf{n}}:=\left(\textbf{2}^\textbf{n}\textbf{K}+\textbf{2}^{-\textbf{n}} \textbf{B}_\textbf{E}\right)$. Since $\textbf{K}$ is conditionally closed, convex and balanced, $\textbf{B}_\textbf{n}$ is conditionally weakly closed, convex and balanced. Further, $\textbf{B}_\textbf{n}$ is a conditional neighborhood of $\textbf{0}\in \textbf{E}$ since it conditionally contains $\textbf{2}^{-\textbf{n}} \textbf{B}_\textbf{E}$. By Lemma \ref{lem: banachDisk}, $\textbf{E}$ can be conditionally renormed obtaining an equivalent conditional norm $\Vert\cdot\Vert_\textbf{n}$ on $\textbf{E}$. Let $\textbf{C}:=[\textbf{x}\in\textbf{E}\:;\:\sum_{\textbf{n}\geq\textbf{1}}\Vert \textbf{x}\Vert_\textbf{n}^\textbf{2}\leq \textbf{1} ]$. 

Then 
\[
\textbf{C}=\underset{\textbf{n}\in\textbf{N}}\sqcap\left[\textbf{x}\in\textbf{E}\:;\:\sum_{\textbf{1}\leq\textbf{k}\leq\textbf{n}}\Vert\textbf{x}\Vert_\textbf{k}^\textbf{2}\leq \textbf{1}\right]. 
\]
So $\textbf{C}$ is conditionally norm closed, convex and balanced; hence, by Proposition \ref{prop: weakClosure}, it is also conditionally weakly closed. It can be easily verified that $\Vert \textbf{x}\Vert_{\textbf{C}}=\left(\sum_{\textbf{n}\geq \textbf{1}}\Vert \textbf{x}\Vert_\textbf{n}^\textbf{2}\right)^{\textbf{1}/\textbf{2}}$ for $\textbf{x}\in\textbf{E}_\textbf{C}$.   
Then:
\begin{enumerate}
	\item For $\textbf{x}\in\textbf{K}$ it holds that $\textbf{2}^\textbf{n}\textbf{x}\in \textbf{2}^\textbf{n}\textbf{K}$ which is conditionally contained in $\textbf{B}_\textbf{n}$, and therefore, $\Vert \textbf{x}\Vert_\textbf{n}\leq \textbf{2}^{-\textbf{n}}$. Then $\Vert\textbf{x}\Vert_{\textbf{C}}=\sum_{\textbf{n}\geq \textbf{1}} \Vert \textbf{x}\Vert_\textbf{n}^\textbf{2}\leq\sum_{\textbf{n}\geq\textbf{1}} \textbf{2}^{-\textbf{2} \textbf{n}}=: \textbf{r}\leq \textbf{1}$. So $\textbf{K}\sqsubset\textbf{C}.$
	\item $\textbf{C}$ is conditional weakly compact in $\textbf{E}$. This follows from Lemma \ref{lem: Groth} by just noticing that $\textbf{C}$ is conditionally weakly closed and $\textbf{C}\sqsubset \textbf{2}^\textbf{n}+ \textbf{2}^{-\textbf{n}} \textbf{B}_{\textbf{E}}$ for each $\textbf{n}\in\textbf{N}$.
	\item 
The conditional topologies $\sigma(\textbf{E},\textbf{E}^*)$ and $\sigma(\textbf{E}_\textbf{C},\textbf{E}^*_\textbf{C})$ coincide on $\textbf{C}$. Note that the conditional function $\textbf{E}_\textbf{C}\rightarrow  \oplus_{\textbf{n}\in\textbf{N}}^2 \textbf{E}_\textbf{n}$ which sends $\textbf{x}$ to $\{\textbf{y}_\textbf{n}\}$ with $\textbf{y}_\textbf{n}=\textbf{x}$ for each $\textbf{n}\in\textbf{N}$, is a conditional isometric embedding.

Besides, in view of Lemma \ref{lem: l2}, we have that  $(\oplus_{\textbf{n}\in\textbf{N}}^2 \textbf{E}_\textbf{n})^*=\oplus_{\textbf{n}\in\textbf{N}}^2 \textbf{E}_\textbf{n}^*$, and $\oplus_{\textbf{n}\in\textbf{N}}^0 \textbf{E}_\textbf{n}^*$ is a conditionally dense subset of $\oplus_{\textbf{n}\in\textbf{N}}^2 \textbf{E}_\textbf{n}$.

Therefore, due to Lemma \ref{lem: denseMatch},  we have that the conditional topologies 
\[
\begin{array}{ccc}
\sigma(\oplus_{\textbf{n}\in\textbf{N}}^2 \textbf{E}_\textbf{n},(\oplus_{\textbf{n}\in\textbf{N}}^2 \textbf{E}_\textbf{n})^*) & \textnormal{ and } & \sigma(\oplus_{\textbf{n}\in\textbf{N}}^2 \textbf{E}_\textbf{n},\oplus_{\textbf{n}\in\textbf{N}}^0 \textbf{E}_\textbf{n}^*)
\end{array}
\]
 agree on the conditionally bounded subset $\textbf{C}$.

But on $\textbf{C}$ the conditional topology $\sigma(\oplus_{\textbf{n}\in\textbf{N}}^2 \textbf{E}_\textbf{n},\oplus_{\textbf{n}\in\textbf{N}}^0 \textbf{E}_\textbf{n}^*)$ turns out to be the conditional weak topology  $\sigma(\textbf{E},\textbf{E}^*)$.
\end{enumerate}
We conclude that, as $\textbf{C}$ is conditionally $\sigma(\textbf{E}_\textbf{C},\textbf{E}^*_\textbf{C})$-compact, $\textbf{E}_\textbf{C}$ is conditionally reflexive in view of Theorem \ref{thm: reflex}.

\end{proof}

Finally let us turn out to prove Theorem \ref{thm: AmirLindenstrauss}

\begin{proof}
Suppose that $\textbf{E}=\overline{\spa_{\textbf{R}}\textbf{K}}$ where $\textbf{K}\sqsubset\textbf{E}$ is on $1$, conditionally weakly compact, convex and balanced. Let \textbf{C} be the conditionally  weakly compact, convex and balanced subset provided by Lemma \ref{lem: Amir-Lin}. On one hand, we have that the conditional function $\textbf{i}:\textbf{E}_\textbf{C}\rightarrow\textbf{E}$, $\textbf{x}\mapsto\textbf{x}$  is conditionally linear continuous with conditionally dense range. On the other hand, second part of Theorem \ref{thm: reflex} tells us that, being $\textbf{E}_\textbf{C}$ conditionally reflexive, it has conditionally weakly-$*$ sequentially compact dual unit balls. Hence, by Proposition \ref{prop: seqCompactClass}, we conclude that the same occurs for $\textbf{E}$. 
\end{proof}

\section{Appendix}

T. Guo et al. \cite{key-2} provided an study of the algebraic structure of finitely generated $L^0$-modules. Also P. Cheridito et al. \cite{key-19} the notion of linear $L ^0$-independence in the module $(L^0)^d$. 

Then we have the following results, which can be proved with techniques which are similar employed in \cite[Section 2]{key-19}. For the sake of saving space we omit the proof:

\begin{prop}
If $\textbf{E}$ is a conditionally finitely generated vector space, then there exists a conditionally finite subset $[\textbf{x}_\textbf{k}\:;\:\textbf{1}\leq\textbf{k}\leq\textbf{n}]$ of $\textbf{E}$ such that
\begin{enumerate}
	\item $\textbf{E}=\spa_\textbf{R}[\textbf{x}_\textbf{k}\:;\:\textbf{1}\leq\textbf{k}\leq\textbf{n}]$;
	\item the conditional function $\textbf{T}:\textbf{R}^\textbf{n}\rightarrow\textbf{E}$ given by $\textbf{T}(\{\textbf{x}_{\textbf{1}\leq\textbf{k}\leq\textbf{n}}\}):=\sum_{\textbf{1}\leq\textbf{k}\leq\textbf{n}} \textbf{r}_\textbf{k}\textbf{x}_\textbf{k}$ is a conditionally isomorphism.
\end{enumerate}
Furthermore, this $\textbf{n}$ is unique in the sense of it does not depend on the conditional finite subset chosen.
\end{prop}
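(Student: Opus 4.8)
The plan is to prove (1) together with the injectivity half of (2) by extracting a conditionally linearly independent generating family from an arbitrary conditionally finite one, and then to deduce uniqueness of $\textbf{n}$ by a conditional Steinitz exchange argument. Throughout, the recurring device is the one already used repeatedly in this paper (e.g. in the proofs of Theorem \ref{thm: baire} and Lemma \ref{lem: weakBounded}): linear algebra over $\textbf{R}$ is only \emph{locally} valid, so one refines a partition of $1$ until the relevant combinatorial data (which generators are redundant, where the pivots sit) is constant on each piece, and then concatenates by stability.

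First I would fix the notion of conditional linear independence: a conditionally finite family $[\textbf{x}_\textbf{k}\:;\:\textbf{1}\leq\textbf{k}\leq\textbf{n}]$ of $\textbf{E}$ is \emph{conditionally linearly independent} if $\sum_{\textbf{1}\leq\textbf{k}\leq\textbf{n}}\textbf{r}_\textbf{k}\textbf{x}_\textbf{k}=\textbf{0}$ forces $\textbf{r}_\textbf{k}=\textbf{0}$ for all $\textbf{k}$ and the same persists after conditioning on every $a\neq 0$; equivalently, the conditional map $\textbf{T}:\textbf{R}^\textbf{n}\rightarrow\textbf{E}$, $\{\textbf{r}_\textbf{k}\}\mapsto\sum_{\textbf{1}\leq\textbf{k}\leq\textbf{n}}\textbf{r}_\textbf{k}\textbf{x}_\textbf{k}$, is conditionally injective. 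Starting from a conditionally finite generating family $[\textbf{y}_\textbf{k}\:;\:\textbf{1}\leq\textbf{k}\leq\textbf{m}]$ with $m=\sum_{i\in I} m_i|a_i$, I would work on each $a_i$ separately and run the ordinary ``discard a redundant generator'' procedure on the genuine finite list $y_1,\dots,y_{m_i}$: at each step split off the maximal $b\leq a_i$ on which some generator lies in the conditional span of the others (a finite disjunction of cases, so only a partition is needed, no choice) and delete that generator there. Since each step strictly decreases a finite count, the procedure terminates and yields a partition $\{d_j\}\in p(1)$ together with, on each $d_j$, a genuinely independent generating list. Concatenating these along $\{d_j\}$ and invoking stability produces a conditionally finite family $[\textbf{x}_\textbf{k}\:;\:\textbf{1}\leq\textbf{k}\leq\textbf{n}]$, with $n=\sum_j n_j|d_j\in N$, which is conditionally linearly independent and satisfies $\textbf{E}=\spa_\textbf{R}[\textbf{x}_\textbf{k}\:;\:\textbf{1}\leq\textbf{k}\leq\textbf{n}]$; that is (1). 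Conditional surjectivity of $\textbf{T}$ is exactly (1) and conditional injectivity is conditional independence, so $\textbf{T}$ is a conditional isomorphism, which is (2). (If $\textbf{E}$ additionally carries a conditional norm, conditional continuity of $\textbf{T}$ and $\textbf{T}^{-1}$ then follows by the usual argument: the conditional unit sphere of $\textbf{R}^\textbf{n}$ is conditionally compact by the conditional Heine--Borel theorem for $\textbf{R}^\textbf{n}$, so $\Vert\textbf{T}(\cdot)\Vert$ attains a conditionally strictly positive conditional minimum on it.)

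For uniqueness, let $[\textbf{x}_\textbf{k}\:;\:\textbf{1}\leq\textbf{k}\leq\textbf{n}]$ and $[\textbf{x}'_\textbf{k}\:;\:\textbf{1}\leq\textbf{k}\leq\textbf{n}']$ be two conditionally independent generating families, with $n=\sum_i n_i|a_i$ and $n'=\sum_j n'_j|b_j$. It suffices to show $n_i=n'_j$ on every $c:=a_i\wedge b_j\neq 0$, since the family $\{a_i\wedge b_j\}$ is a partition of $1$ and hence this forces $\textbf{n}=\textbf{n}'$ in $\textbf{N}$. On $c$ one has two genuine finite generating-and-independent lists of $\textbf{E}|c$ over $\textbf{R}_c$; expressing each $\textbf{x}'_\textbf{k}|c$ as an $\textbf{R}_c$-combination of the $\textbf{x}_\textbf{k}|c$ gives a coefficient matrix over $\textbf{R}_c$, and if $n_i<n'_j$ on $c$ one solves the associated homogeneous system to manufacture a nontrivial conditional relation among the $\textbf{x}'_\textbf{k}|c$, contradicting their conditional independence on $c$; by symmetry $n_i=n'_j$ on $c$.

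The main obstacle is exactly the two places where one ``solves a linear system over $\textbf{R}$'': a homogeneous system over $\textbf{R}_c$ need not have a conditionally nonzero solution (its coordinates may vanish on a subset of $c$), and the rank and pivot positions are only locally constant. The remedy, precisely as in \cite[Section 2]{key-19}, is a conditional Gaussian-elimination lemma: after refining $c$ by a finite partition one may assume the matrix is in a reduced form with constant pivot pattern, and then the classical solvability argument applies verbatim on each piece and glues by stability. Once that lemma is available, both the exchange argument and the termination of the extraction reduce to routine bookkeeping with concatenations along partitions of $1$.
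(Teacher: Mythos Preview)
The paper does not actually give a proof of this proposition: it explicitly omits it, saying only that it ``can be proved with techniques which are similar employed in \cite[Section 2]{key-19}.'' Your sketch is precisely an implementation of those techniques---extracting a conditionally independent generating family by a discard-and-refine-the-partition procedure, and proving uniqueness via a conditional Gaussian-elimination/exchange argument that is locally classical on each piece of a partition of $1$---so your approach coincides with what the paper points to. Your identification of the only real obstacle (that rank and pivot structure of a matrix over $\textbf{R}$ are only locally constant, so one needs the conditional Gaussian-elimination lemma of \cite{key-19}) is exactly right, and once that lemma is granted the rest is routine stability bookkeeping as you describe.
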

 
%

Down below it is stated and proved the conditional version of the Heine-Borel Theorem:

\begin{prop}
\label{prop: Heine-BorelExt}
If $(\textbf{E},\Vert\cdot\Vert)$ is a conditionally finitely generated  normed space then it holds:

\begin{enumerate}
	\item $\textbf{T}$ is a conditional homeomorphism.
	\item Every conditionally closed and bounded subset of $\textbf{E}$ is conditionally compact.
\end{enumerate}
\end{prop}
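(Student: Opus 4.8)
The plan is to derive part (2) from part (1) together with the conditional Heine--Borel theorem for $\textbf{R}^\textbf{n}$ (\cite[Theorem 5.5.8]{key-36}), so the substance lies in proving part (1). Let $[\textbf{x}_\textbf{k}\:;\:\textbf{1}\leq\textbf{k}\leq\textbf{n}]$ and $\textbf{T}:\textbf{R}^\textbf{n}\rightarrow\textbf{E}$, $\textbf{T}(\{\textbf{r}_\textbf{k}\})=\sum_{\textbf{1}\leq\textbf{k}\leq\textbf{n}}\textbf{r}_\textbf{k}\textbf{x}_\textbf{k}$, be the conditionally finite generating family and the conditional linear isomorphism provided by the preceding proposition. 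Conditional continuity of $\textbf{T}$ is the easy half: the conditional triangle inequality and the conditional Cauchy--Schwarz inequality for conditionally finite sums give $\Vert\textbf{T}(\{\textbf{r}_\textbf{k}\})\Vert\leq\textbf{M}\Vert\{\textbf{r}_\textbf{k}\}\Vert$ with $\textbf{M}:=(\sum_{\textbf{1}\leq\textbf{k}\leq\textbf{n}}\Vert\textbf{x}_\textbf{k}\Vert^\textbf{2})^{\textbf{1}/\textbf{2}}\in\textbf{R}^+$, so $\textbf{T}$ is conditionally Lipschitz and hence conditionally continuous.

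The heart of the matter, and the step I expect to be the main obstacle, is the conditional continuity of $\textbf{T}^{-1}$; concretely, one must produce $\textbf{m}\in\textbf{R}^{++}$ with $\Vert\textbf{T}(\textbf{v})\Vert\geq\textbf{m}\Vert\textbf{v}\Vert$ for all $\textbf{v}\in\textbf{R}^\textbf{n}$. I would argue as in the classical proof. The conditional unit sphere $\textbf{S}:=\textbf{S}_{\textbf{R}^\textbf{n}}$ is conditionally closed and conditionally bounded, hence conditionally compact by the conditional Heine--Borel theorem for $\textbf{R}^\textbf{n}$. The conditional function $\textbf{g}:\textbf{R}^\textbf{n}\rightarrow\textbf{R}$, $\textbf{g}(\textbf{v}):=\Vert\textbf{T}(\textbf{v})\Vert$, is conditionally continuous by the previous paragraph, so $\textbf{g}(\textbf{S})$ is a conditionally compact, hence conditionally closed and bounded, conditional subset of $\textbf{R}$; its conditional infimum $\textbf{m}$ therefore belongs to $\textbf{g}(\textbf{S})$, say $\textbf{m}=\textbf{g}(\textbf{v}_0)=\Vert\textbf{T}(\textbf{v}_0)\Vert$ with $\textbf{v}_0\in\textbf{S}$. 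Since $\Vert\textbf{v}_0\Vert=\textbf{1}$ we have $\supp(\textbf{v}_0)=1$, i.e. $v_0|a\neq 0|a$ for every $a\neq 0$; the conditional injectivity of $\textbf{T}$ then forces $\textbf{T}(\textbf{v}_0)|a\neq 0|a$ for every $a\neq 0$, which is exactly $\textbf{m}=\Vert\textbf{T}(\textbf{v}_0)\Vert>\textbf{0}$.

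It remains to globalize this bound. For arbitrary $\textbf{v}\in\textbf{R}^\textbf{n}$ set $a:=\supp(\textbf{v})$: on $a^c$ both sides of $\Vert\textbf{T}(\textbf{v})\Vert\geq\textbf{m}\Vert\textbf{v}\Vert$ vanish, while on $a$ the element $\textbf{v}\Vert\textbf{v}\Vert^{-1}$ lies in $\textbf{S}|a$, so $\textbf{R}$-homogeneity of $\textbf{T}$ and of the conditional norm together with $\textbf{g}(\textbf{v}\Vert\textbf{v}\Vert^{-1})|a\geq\textbf{m}|a$ give $\Vert\textbf{T}(\textbf{v})\Vert|a\geq\textbf{m}\Vert\textbf{v}\Vert|a$; concatenating yields the inequality on $1$. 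Equivalently $\Vert\textbf{T}^{-1}(\textbf{w})\Vert\leq\textbf{m}^{-1}\Vert\textbf{w}\Vert$ for all $\textbf{w}\in\textbf{E}$, so $\textbf{T}^{-1}$ is conditionally Lipschitz, hence conditionally continuous; together with conditional bijectivity and the continuity of $\textbf{T}$ this proves (1).

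Finally, for (2), let $\textbf{K}\sqsubset\textbf{E}$ be conditionally closed and conditionally bounded, which we may take on $1$. Then $\textbf{T}^{-1}(\textbf{K})$ is conditionally closed, being the preimage of $\textbf{K}$ under the conditionally continuous $\textbf{T}$, and conditionally bounded, since $\textbf{T}^{-1}$ is conditionally Lipschitz; hence $\textbf{T}^{-1}(\textbf{K})$ is conditionally compact in $\textbf{R}^\textbf{n}$ by the conditional Heine--Borel theorem. Therefore $\textbf{K}=\textbf{T}(\textbf{T}^{-1}(\textbf{K}))$ is the conditionally continuous image of a conditionally compact conditional set, so it is conditionally compact by \cite[Proposition 3.26]{key-7}.
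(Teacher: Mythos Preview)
Your proof is correct and follows essentially the same route as the paper: continuity of $\textbf{T}$ by a direct norm estimate, continuity of $\textbf{T}^{-1}$ via the conditional compactness of the unit sphere in $\textbf{R}^\textbf{n}$ together with the conditional injectivity of $\textbf{T}$ to force the attained minimum to be strictly positive, and then part (2) by pulling back through $\textbf{T}^{-1}$ and applying the conditional Heine--Borel theorem for $\textbf{R}^\textbf{n}$. The only cosmetic differences are that the paper works with the $\Vert\cdot\Vert_\textbf{1}$ norm on $\textbf{R}^\textbf{n}$ and bounds $\Vert\textbf{T}(\textbf{z})\Vert$ by $\nsup[\Vert\textbf{x}_\textbf{k}\Vert]\cdot\Vert\textbf{z}\Vert_\textbf{1}$ rather than using Cauchy--Schwarz, and it states the globalization step more tersely.
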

\begin{proof}
\begin{enumerate}

\item
The conditional continuity of $\textbf{T}$ is a consequence of conditional boundedness. Indeed, for $\textbf{z}\in\textbf{R}^\textbf{n}$
\[
\begin{array}{cc}
\Vert \textbf{T}(\textbf{z})\Vert \leq\left\Vert\sum_{1\leq\textbf{k}\leq\textbf{n}} \textbf{r}_\textbf{k} \textbf{x}_\textbf{k}\right\Vert \leq \textbf{r}\Vert\textbf{z}\Vert_\textbf{1},& \textnormal{ with } \textbf{r}:=\nsup[\Vert\textbf{x}_\textbf{k}\Vert\:;\: \textbf{k}\leq\textbf{n}].  
\end{array}
\]

Let us show that $\textbf{T}^{-1}$ is conditionally continuous. Firstly, note that $\textbf{S}:=[\textbf{z}\in\textbf{R}^\textbf{n}\:;\: \Vert\textbf{z}\Vert_\textbf{1}=\textbf{1}]$, being conditionally closed and bounded, is conditionally compact in $\textbf{R}^\textbf{n}$ by the conditional Heine-Borel theorem (see \cite[Theorem 5.5.8]{key-36}). Also the conditional function $\textbf{f}:\textbf{S}\rightarrow\textbf{R}$ defined by $\textbf{f}(\textbf{z}):=\Vert\textbf{T}(\textbf{z})\Vert$ is conditionally continuous and, since $\textbf{T}$ is conditionally injective, $\textbf{0}\in\textbf{f}(\textbf{S})^\sqsubset$. Consequently, $\textbf{f}$ attains its minimum in $\textbf{S}$, hence $0<\textbf{r}:=\nmin[\textbf{f}(\textbf{z})\:;\:\textbf{z}\in\textbf{S}]$. Then for $\textbf{z}\in\textbf{R}^\textbf{n}$ we have $\textbf{f}(\textbf{z})\geq\textbf{r}\Vert\textbf{z}\Vert_\textbf{1}$, which yields $\textbf{r}\Vert\textbf{z}\Vert_\textbf{1}\leq\Vert\textbf{T}(\textbf{z})\Vert$, and therefore $\textbf{T}^{-1}$ is conditionally continuous.  
\item Let $\textbf{K}$ be a conditionally closed and bounded subset of $\textbf{E}$. Since $\textbf{T}$ is continuous, $\textbf{T}^{-1}(\textbf{K})$ is conditionally closed; since $\textbf{T}^{-1}$ is conditionally linear and continuous $\textbf{T}^{-1}(\textbf{K})$ is conditionally bounded. By the conditional Heine-Borel theorem (see \cite[Theorem 5.5.8]{key-36}), $\textbf{T}^{-1}(\textbf{K})$ is conditionally compact. Since $\textbf{T}$ is continuous, in view of \cite[Proposition 3.26]{key-7}), we have that $\textbf{K}=\textbf{T}(\textbf{T}^{-1}(\textbf{K}))$ is conditionally compact.    

\end{enumerate}

\end{proof}

We have below the conditional version of the Cauchy-Schwarz inequality for conditional series:

\begin{prop}
\label{prop: CSinequality}
For given two conditional sequences $\{\textbf{a}_\textbf{n}\},\:\{\textbf{b}_\textbf{n}\}$ in $\textbf{R}$, we have that 
\begin{equation}
\left(\sum_{\textbf{k}\geq\textbf{1}}{\textbf{a}_\textbf{k}\textbf{b}_\textbf{k}}\right)^\textbf{2}\leq\left(\sum_{\textbf{k}\geq\textbf{1}}{\textbf{a}_\textbf{k}^\textbf{2}}\right)\left(\sum_{\textbf{k}\geq\textbf{1}}{\textbf{b}_\textbf{k}^\textbf{2}}\right).
\end{equation} 
\end{prop}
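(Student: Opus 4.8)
The plan is to reduce the statement to its finitary counterpart and then pass to the conditional limit. \emph{Step 1: the finite conditional Cauchy--Schwarz inequality.} I would first show that for every $\textbf{n}\in\textbf{N}$
\[
\left(\sum_{\textbf{1}\leq\textbf{k}\leq\textbf{n}}\textbf{a}_\textbf{k}\textbf{b}_\textbf{k}\right)^{\textbf{2}}\leq\left(\sum_{\textbf{1}\leq\textbf{k}\leq\textbf{n}}\textbf{a}_\textbf{k}^{\textbf{2}}\right)\left(\sum_{\textbf{1}\leq\textbf{k}\leq\textbf{n}}\textbf{b}_\textbf{k}^{\textbf{2}}\right).
\]
Writing $\textbf{A}:=\sum_{\textbf{1}\leq\textbf{k}\leq\textbf{n}}\textbf{a}_\textbf{k}^{\textbf{2}}$, $\textbf{B}:=\sum_{\textbf{1}\leq\textbf{k}\leq\textbf{n}}\textbf{a}_\textbf{k}\textbf{b}_\textbf{k}$ and $\textbf{C}:=\sum_{\textbf{1}\leq\textbf{k}\leq\textbf{n}}\textbf{b}_\textbf{k}^{\textbf{2}}$, the conditional ordered field axioms give $\textbf{0}\leq\sum_{\textbf{1}\leq\textbf{k}\leq\textbf{n}}(\textbf{a}_\textbf{k}+\textbf{t}\textbf{b}_\textbf{k})^{\textbf{2}}=\textbf{A}+\textbf{2}\textbf{t}\textbf{B}+\textbf{t}^{\textbf{2}}\textbf{C}$ for every $\textbf{t}\in\textbf{R}$. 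Put $a:=\supp(\textbf{C})$. On $a^{c}$ we have $\textbf{C}|a^{c}=\textbf{0}|a^{c}$, and since a conditionally finite sum of conditionally nonnegative terms that is conditionally null forces each term to be conditionally null, $\textbf{b}_\textbf{k}|a^{c}=\textbf{0}|a^{c}$ for $\textbf{1}\leq\textbf{k}\leq\textbf{n}$; hence $\textbf{B}|a^{c}=\textbf{0}|a^{c}$ and the inequality is trivial on $a^{c}$. Arguing on $a$, where $\textbf{C}>\textbf{0}$ is conditionally invertible, I would substitute $\textbf{t}=-\textbf{B}\textbf{C}^{-1}$ to get $\textbf{A}-\textbf{B}^{\textbf{2}}\textbf{C}^{-1}\geq\textbf{0}$, i.e. $\textbf{B}^{\textbf{2}}\leq\textbf{A}\textbf{C}$ on $a$; consistency then glues the two cases into the inequality on $1$. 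Applying this to $|\textbf{a}_\textbf{k}|,|\textbf{b}_\textbf{k}|$ gives in addition $\left(\sum_{\textbf{1}\leq\textbf{k}\leq\textbf{n}}|\textbf{a}_\textbf{k}\textbf{b}_\textbf{k}|\right)^{\textbf{2}}\leq\left(\sum_{\textbf{1}\leq\textbf{k}\leq\textbf{n}}\textbf{a}_\textbf{k}^{\textbf{2}}\right)\left(\sum_{\textbf{1}\leq\textbf{k}\leq\textbf{n}}\textbf{b}_\textbf{k}^{\textbf{2}}\right)\leq\left(\sum_{\textbf{k}\geq\textbf{1}}\textbf{a}_\textbf{k}^{\textbf{2}}\right)\left(\sum_{\textbf{k}\geq\textbf{1}}\textbf{b}_\textbf{k}^{\textbf{2}}\right)$.

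\emph{Steps 2 and 3: convergence and passage to the limit.} From the last display, the conditionally nondecreasing conditional sequence $\textbf{S}_\textbf{n}:=\sum_{\textbf{1}\leq\textbf{k}\leq\textbf{n}}|\textbf{a}_\textbf{k}\textbf{b}_\textbf{k}|$ is conditionally bounded above, so it conditionally converges by the conditional Dedekind completeness of $\textbf{R}$ and is therefore conditionally Cauchy; since $|\sum_{\textbf{q}<\textbf{k}\leq\textbf{p}}\textbf{a}_\textbf{k}\textbf{b}_\textbf{k}|\leq\textbf{S}_\textbf{p}-\textbf{S}_\textbf{q}$ for $\textbf{q}\leq\textbf{p}$, the partial sums of $\sum\textbf{a}_\textbf{k}\textbf{b}_\textbf{k}$ are conditionally Cauchy as well and conditionally converge, so $\sum_{\textbf{k}\geq\textbf{1}}\textbf{a}_\textbf{k}\textbf{b}_\textbf{k}$ is well defined. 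Letting $\textbf{n}$ run over $\textbf{N}$ in the finite inequality of Step 1 and using that conditional multiplication on $\textbf{R}\Join\textbf{R}$ and conditional squaring are conditionally continuous and that the conditional order is preserved under conditional limits (from the conditional closedness of $[(\textbf{r},\textbf{s})\in\textbf{R}\Join\textbf{R}\:;\:\textbf{r}\leq\textbf{s}]$), both sides conditionally converge to the two sides of the asserted inequality, which therefore holds.

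I expect Step 1 to be the main obstacle: not the algebra, which merely transcribes the classical discriminant computation, but the conditional bookkeeping, since $\textbf{C}$ need be conditionally invertible only on a proper element $a$ of $\A$, forcing one to treat $a$ and $a^{c}$ separately and then reassemble via the consistency axiom; the rest is routine and rests on the conditional Dedekind completeness of $\textbf{R}$ and on the conditional continuity of the field operations.
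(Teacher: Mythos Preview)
Your proof is correct and follows essentially the same approach as the paper: establish the finite Cauchy--Schwarz inequality via the nonnegativity of the quadratic $\textbf{t}\mapsto\sum(\textbf{a}_\textbf{k}+\textbf{t}\textbf{b}_\textbf{k})^{\textbf{2}}$, handle the degenerate case by splitting on the support of the leading coefficient, and then pass to the conditional limit. Your Step~2 is in fact more careful than the paper, which simply asserts ``the result follows, just by taking conditional limit in $\textbf{n}$'' without explicitly addressing the convergence of $\sum_{\textbf{k}\geq\textbf{1}}\textbf{a}_\textbf{k}\textbf{b}_\textbf{k}$.
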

\begin{proof}
For fixed $\textbf{n}\in\textbf{N}$, it follows that
\[
\begin{array}{cc}
\sum_{\textbf{1}\leq \textbf{k}\leq\textbf{n}}({\textbf{a}_\textbf{k} \textbf{x}+\textbf{b}_\textbf{k}})^\textbf{2}\geq\textbf{0} & \textnormal{ for all }\textbf{x}\in \textbf{R}.
\end{array}
\]
This means that
\begin{equation}
\label{eq: cs2}
\begin{array}{cc}
\textbf{a}\textbf{x}^\textbf{2}+\textbf{2}\textbf{b}\textbf{x}+\textbf{c}\geq\textbf{0} & \textnormal{ for all }\textbf{x}\in \textbf{R},
\end{array}
\end{equation}
where $\textbf{a}=\sum_{\textbf{1}\leq \textbf{k}\leq\textbf{n}}{\textbf{a}^\textbf{2}_\textbf{k}}$, $\textbf{b}=\sum_{\textbf{1}\leq \textbf{k}\leq\textbf{n}}{\textbf{a}_\textbf{k}\textbf{b}_\textbf{k}}$ and $\textbf{c}=\sum_{\textbf{1}\leq \textbf{k}\leq\textbf{n}}{\textbf{b}^\textbf{2}_\textbf{k}}$.

The result is trivial on $\supp(\textbf{a})^c$. So, by consistency, we can suppose w.l.g. $\supp(\textbf{a})=1$.

Doing so, we have that $\textbf{a}\in [\textbf{0}]^{\sqsubset}$. Now, if we take $\textbf{x}:=-\frac{\textbf{b}}{\textbf{a}}$,  from  (\ref{eq: cs2}) we obtain $\textbf{b}^\textbf{2}-\textbf{a}\textbf{c}\leq\textbf{0}$. Equivalently
\[ 
\left(\sum_{\textbf{1}\leq\textbf{k}\leq\textbf{n}}{\textbf{a}_\textbf{k}\textbf{b}_\textbf{k}}\right)^\textbf{2}\leq\left(\sum_{\textbf{1}\leq\textbf{k}\leq\textbf{n}}{\textbf{a}_\textbf{k}^\textbf{2}}\right)\left(\sum_{\textbf{1}\leq\textbf{k}\leq\textbf{n}}{\textbf{b}_\textbf{k}^\textbf{2}}\right).
\]
The result follows, just by taking conditional limit in $\textbf{n}$.
\end{proof}

\vspace{2cc}

\vspace{1cc}

Jos\'e M. Zapata. Universidad de Murcia, Dpto. Matem\'aticas, 30100 Espinardo, Murcia, Spain, e-mail: jmzg1@um.es

{\small
\noindent

}\end{document}